\newcommand{\oneleg}{\Psi}
\newcommand{\Phiplus}{\Phi^+}
\newcommand{\Phiminus}{\Phi^-}
\newcommand{\bp}{{\bar\partial}}
\newcommand{\bs}{\bigskip}
\newcommand{\bfs}{\mathbf} %\newcommand{\bfs}{\boldsymbol}
\newcommand{\ms}{\medskip}
\newcommand{\pa}{\partial}
\newcommand{\sm}{\setminus}
\newcommand{\wh}{\widehat}
\newcommand{\wt}{\widetilde}
\newcommand{\ve}{\varepsilon}
\newcommand{\eff}{ \mathrm{eff}}
\newcommand{\hol}{ \mathrm{hol}}
\newcommand{\Aut}{ \mathrm{Aut}}
\newcommand{\id}{ \mathrm{id}}
\newcommand{\SLE}{ \mathrm{SLE}}
\newcommand{\scap}{ \mathrm{scap}}
\DeclareMathOperator{\Sing}{Sing}
\newcommand{\dd}{\mathrm{d}}
\newcommand{\FF}{\mathcal{F}}
\newcommand{\LL}{\mathcal{L}}
\newcommand{\NN}{\mathcal{N}}
\newcommand{\OO}{\mathcal{O}}
\newcommand{\PP}{\mathcal{P}}
\newcommand{\VV}{\mathcal{V}}
\newcommand{\XX}{\mathcal{X}}
\newcommand{\YY}{\mathcal{Y}}
\newcommand{\ZZ}{\mathcal{Z}}
\newcommand{\C}{\mathbb{C}}
\newcommand{\E}{\mathbf{E}}
\newcommand{\R}{\mathbb{R}}
\newcommand\arXiv[1]{\href{http://arxiv.org/abs/#1}{arXiv:#1}}
\newcommand\arxiv[1]{\href{http://arxiv.org/abs/#1}{arXiv: #1}}
\numberwithin{equation}{section}
\numberwithin{theorem}{section}
\newtheorem{lem}[theorem]{Lemma}
\newtheorem{prop}[theorem]{Proposition}
\def\subclassname{{\bfseries Mathematics Subject Classification
(2010)}\enspace}
\def\subclass#1{\par\addvspace\medskipamount{\rightskip=0pt plus1cm
\def\and{\ifhmode\unskip\nobreak\fi\ $\cdot$
}\noindent\subclassname\ignorespaces#1\par}}
\journalname{}
\begin{document}
\title{Conformal field theory of dipolar SLE with the Dirichlet boundary condition
\thanks{The authors were partially supported by NRF grant 2010-0021628. The first author also holds joint appointment in the Research Institute of Mathematics, Seoul National University.}
}
\titlerunning{CFT of dipolar SLE with the Dirichlet boundary condition}

%%%%%%%%%%%%%%%%%%%%%%%%%%%%%%% authors %%%%%%%%%%%%%%%%%%%%%%%%%%%%%%
\author{Nam-Gyu Kang \and Hee-Joon Tak}
\institute{Nam-Gyu Kang \email{nkang@snu.ac.kr} \at Department of Mathematical Sciences, Seoul National University, Seoul, 151-747, Republic of Korea
\and
Hee-Joon Tak \email{tdd502@snu.ac.kr} \at Department of Mathematical Sciences, Seoul National University, Seoul, 151-747, Republic of Korea}

%%%%%%%%%%%%%%%%%%%%%%%%%%%%%%% authors %%%%%%%%%%%%%%%%%%%%%%%%%%%%%%

\date{}
%\date{Received: date / Accepted: date}
% The correct dates will be entered by the editor

\maketitle

%%%%%%%%%%%%%%%%%%%%%%%%%%%%%%% abstract %%%%%%%%%%%%%%%%%%%%%%%%%%%%%%
\begin{abstract}
We develop a version of dipolar conformal field theory based on the central charge modification of the Gaussian free field with the Dirichlet boundary condition and prove that correlators of certain family of fields in this theory are martingale-observables for dipolar SLE.
We prove the restriction property of dipolar SLE(8/3) and Friedrich-Werner's formula in the dipolar case.
\keywords{dipolar conformal field theory, dipolar SLE, martingale-observables}
\subclass{Primary 60J67, 81T40; Secondary 30C35}
\end{abstract}
%%%%%%%%%%%%%%%%%%%%%%%%%%%%%%% abstract %%%%%%%%%%%%%%%%%%%%%%%%%%%%%%

%%%%%%%%%%%%%%%%%%%%%%%%%%%%%%%%%%%%%%%%%%%%%%%%%%%%%%%%%%%%%%%%%%%
\section{Introduction} \label{sec: intro}
%%%%%%%%%%%%%%%%%%%%%%%%%%%%%%%%%%%%%%%%%%%%%%%%%%%%%%%%%%%%%%%%%%%

We implement a version of dipolar conformal field theory with the Dirichlet boundary condition in a simply connected domain $D$ with two marked boundary points $q_-,q_+.$
Using this theory we study basic properties of a certain collection of dipolar $\SLE_\kappa$ martingale-observables.
In physics literature (e.g., \cite{BBH05}), it is well known that under the insertions of the boundary operator $\psi_{1;2}$ at $p\in\pa D$ and of the boundary operators $\psi_{0;1/2}$ at $q_\pm,$ all correlations of the fields in a certain collection are martingale-observables for  dipolar $\SLE_\kappa$ from $p$ to the boundary arc $Q$ ($p\notin Q$) with endpoints $q_\pm.$
We present its proof after we give a precise definition for $\Psi(p)\equiv\Psi(p;q_-,q_+)$ ($=\psi_{1;2}(p)\psi_{0;1/2}(q_-)\psi_{0;1/2}(q_+)$ up to boundary puncture operators) as a boundary vertex field rooted at $q_\pm$ of a single variable $p.$

A version of dipolar conformal field theory with the Neumann boundary condition can be implemented as the dual of theory with the Dirichlet boundary condition (see Subsection~\ref{ss: formal} below).
For example, as a bi-variant field, a Gaussian free field $\Phi^N(z,z_0)$ with the Neumann boundary condition can be defined as the dual boson $\widetilde\Phi(z,z_0)$ of the Gaussian free field $\Phi$ with the Dirichlet boundary condition.
On the other hand, in \cite{Kang12} we study a dipolar conformal field theory of central charge one with mixed boundary condition (Dirichlet boundary condition on one boundary arc and Neumann boundary condition on the other arc) and its relation to dipolar $\SLE_4.$

 Definitions and theories developed in the study of the chordal case \cite{KM11} and the radial case \cite{KM12} are modified into the dipolar case.
We also explain the similarities and differences of the chordal, the radial, and the dipolar conformal field theory.
For example, similarly as in the radial case, the neutrality condition is required for the (rooted) multi-vertex fields to be well-defined Fock space fields.
Under the neutrality condition, the (rooted) multi-vertex fields are $\Aut(D,q_\pm)$-invariant primary fields and their correlators with $\Psi(p)/\E\,[\Psi(p)]$ are dipolar $\SLE_\kappa$ martingale-observables.

%%%%%%%%%%%%%%%%%%%%%%%%%%%%%%%%%%%%%%%%%%%%%%%%%%%%%%%%%%%%%%%%%%%
\bs\section{Main results}
%%%%%%%%%%%%%%%%%%%%%%%%%%%%%%%%%%%%%%%%%%%%%%%%%%%%%%%%%%%%%%%%%%%

%%%%%%%%%%%%%%%%%%%%%%%%%%%%%%%%%%%%%%%%%%%%%%%%%%%%%%%%%%%%%%%%%%%
 \subsection{Dipolar SLE martingale-observables} \label{ss: SLE MO}
Let us consider a simply connected domain $(D,p,Q)$ with a marked boundary point $p$ and a marked boundary arc $Q \subseteq \pa D$ such that $p\notin \bar Q.$
We denote by $q_-,q_+$ two endpoints of $Q$ such that $q_-,p,q_+$ are positively oriented.
A dipolar Schramm-Loewner evolution ($\SLE_\kappa$) in $(D,p,Q)$ with a parameter $\kappa$ $(\kappa>0)$ is the conformally invariant law on random curves from the point $p$ to the arc $Q$ described by the solution $\psi_t(z)$ of the dipolar Loewner equation
\begin{equation} \label{eq: SLE}
\partial_t \psi_t(z) = \coth_2(\psi_t(z)-\xi_t), \quad (\xi_t = \sqrt\kappa B_t),
\end{equation}
where $\coth_2(z):=\coth(\frac12z)$ and $B_t$ is a one-dimensional standard Brownian motion with $B_0=0.$
As an initial data, $\psi_0:(D, p,Q)\to(\mathbb{S},0,\R + \pi i)$ is the conformal map from $D$ onto the strip $\mathbb{S}:=\{z \in\C\,|\, 0 < \Im\,z < \pi\}.$
(The solution $\psi_t(z)$ of \eqref{eq: SLE} exists up to a stopping time $\tau_z\in(0,\infty].$)
Then for all $t,$
$$w_t:(D_t,\gamma_t,Q)\to(\mathbb{S},0,\R+\pi i), \qquad w_t(z):=\psi_t(z)-\xi_t$$
is a conformal map from
$$D_t := \{z \in D: \tau_z>t\}$$
onto the strip $\mathbb{S}.$
The \emph{dipolar SLE curve} $\gamma$ is defined by
$$\gamma_t \equiv \gamma(t) := \lim_{z\to0} w_t^{-1}(z).$$
It is well known (\cite{SW05,Zhan08}) that the SLE trace exists in the dipolar case and that dipolar $\SLE_\kappa$ is equivalent to chordal $\SLE_\kappa(\frac12(\kappa-6),\frac12(\kappa-6))$ with two force points $q_\pm.$
We plan to apply definitions and constructions developed in \cite{KM11,KM12} to conformal field theory of $\SLE_\kappa(\bfs{\rho}).$
As a preliminary work, we study a version of dipolar conformal field theory in this paper.

To define dipolar SLE martingale-observables, let us recall the definition of non-random conformal fields.
See \cite[Section~4.1]{KM11} for more details.
A non-random \emph{conformal} field $M$ is an assignment of a (smooth) function
$(M\,\|\,\phi): ~\phi U\to\C$
to each local chart $\phi:U\to\phi U.$
A non-random conformal Fock space field $M$ is a $[\lambda,\lambda_*]$-\emph{differential} if for any two overlapping charts $\phi,\wt\phi,$ we have
$$(M\,\|\,\phi) = (h')^\lambda(\overline{h'})^{\lambda_*}(M\,\|\,\wt\phi)\circ h,$$
where $h=\wt\phi\circ\phi^{-1}:~ \phi(U\cap\wt U)\to \wt\phi(U\cap\wt U)$ is the transition map.
A pair $[\lambda,\lambda_*]$ is called degrees or conformal dimensions of $M.$
Non-random conformal Fock space fields $M$ are called \emph{pre-pre-Schwarzian forms}, \emph{pre-Schwarzian forms}, and \emph{Schwarzian forms} of order $\mu(\in\C)$ if the following transformation laws hold:
$$(M\,\|\,\phi)=(M\,\|\,\wt\phi)\circ h +\mu \log h',\, (M\,\|\,\phi)=h'\,(M\,\|\,\wt\phi)\circ h +\mu \frac{h''}{h'},$$
and $$(M\,\|\,\phi)=(h')^2\,(M\,\|\,\wt\phi)\circ h + \mu S_h,$$
respectively.
Here, $S_h$ is the Schwarzian derivative of $h,$
$S_h = ({h''}/{h'})' -\frac12({h''}/{h'})^2.$

A non-random (conformal) field $M$ of $n$ variables in $\mathbb{S}$ is said to be a \emph{martingale-observable} for dipolar $\SLE_\kappa$ if for any $z_1,\cdots ,z_n\in D,$ the process
$$M_t(z_1,\cdots, z_n)=(M_{D_t,\gamma_t,Q}\,\|\,\id)(z_1,\cdots, z_n) = (M\,\|\,w_t^{-1})(z_1,\cdots, z_n)$$
is a local martingale on dipolar SLE probability space.
(The process $M_t(z_1,\cdots, z_n)$ is stopped when any $z_j$ exits $D_t.$)
For example, we can use the identity chart of $D.$
Then for $[h,h_*]$-differentials $M$ with boundary conformal dimensions $h_\pm$ at $q_\pm,$ we have
$$M_t(z) = (w_t'(z))^h (\overline{w_t'(z)})^{h_{*}} (w_t'(q_-))^{h_-} (w_t'(q_+))^{h_+}M(w_t(z)).$$
If $M$ is a pre-Schwarzian form of order $\mu,$ then
$$M_t(z) = w_t'(z)M(w_t(z)) + \mu \frac{w_t''(z)}{w_t'(z)}.$$
Similarly, for a Schwarzian form $M$ of order $\mu,$ we have
$$M_t(z) = (w_t'(z))^2M(w_t(z)) + \mu S_{w_t}(z).$$

%%%%%%%%%%%%%%%%%%%%%%%%%%%%%%%%%%%%%%%%%%%%%%%%%%%%%%%%%%%%%%%%%%%
\subsection{A dipolar CFT} \label{ss: dipolar CFT}
All our fields in this paper are Fock space (correlational) fields constructed from the Gaussian free field $\Phi_{(0)}$ with the Dirichlet boundary condition, its derivatives, and Wick's exponentials $e^{\odot\alpha\Phi_{(0)}}\,(\alpha\in\C)$ by means of Wick's calculus.
(An alternate notation for Wick's exponentials of $\Phi_{(0)}$ is $:\!e^{\alpha\Phi_{(0)}}\!:.$)
For Fock space fields $X_1,\cdots,X_n$ and distinct points (\emph{nodes}) $z_1,\cdots,z_n$ in $D,$ a \emph{correlation function} 
$$\E[X_1(z_1)\cdots X_n(z_n)]$$
is defined by Wick's formula.
We will review its definition and basic properties in Subsection~\ref{ss: F-fields}.
For example, we define
$$\E[\Phi_{(0)}(z)]=0, \qquad \E[\Phi_{(0)}(z_1) \Phi_{(0)}(z_2)]= 2G(z_1,z_2)$$
($G$ is the Green's function for $D$ with the Dirichlet boundary condition) and
$$\E[\Phi_{(0)}(z_1) \cdots\Phi_{(0)}(z_n)] = \sum \prod_k \E[\Phi_{(0)}(z_{i_k})\Phi_{(0)}(z_{j_k})],$$
where the sum is over all partitions of the set $\{1,\cdots,n\}$ into disjoint pairs $\{i_k,j_k\}.$

For a simply connected domain $D$ with a marked boundary arc $Q \subseteq \pa D,$ we consider a conformal map
$$w \equiv w_{D,Q}: (D,Q) \to (\mathbb{S},\R+\pi i),$$
from $D$ onto the strip $\mathbb{S}.$
For a fixed parameter $b\in\R,$ we define central charge modifications $\Phi\equiv\Phi_{(b)}$ of the Gaussian free field $\Phi_{(0)}$ by
$$\Phi_{(b)} = \Phi_{(0)}  -2b\,\arg w'$$
(cf. central charge modifications of the Gaussian free field in the chordal case, see e.g., \cite[Section~10.1]{KM11}).
We denote by $\FF_{(b)}$ the OPE family of $\Phi_{(b)}$, the algebra over $\C$ spanned by $1,\pa^j\bp^k \Phi_{(b)},$ and derivatives of the vertex fields $\pa^j\bp^k \VV^\alpha_{(b)}\, (\alpha\in\C)$ under OPE multiplications $*.$
We will recall the definition of operator product expansion and its basic properties in Subsection~\ref{ss: OPE}.
As in the chordal case (\cite[Sections~3.3, 10.2]{KM11}), vertex fields are defined as OPE-exponentials of the bosonic field $\Phi=\Phi_{(b)}:$
$$\VV^\alpha\equiv \VV^\alpha_{(b)}=e^{*\alpha\Phi}=\sum_{n=0}^\infty\frac{\alpha^n}{n!}\Phi^{*n}.$$

 As in the radial case (\cite{KM12}), we extend the OPE family $\FF_{(b)}$ to include the bi-variant chiral bosonic field $\Phiplus_{(b)}(z,z_0),$ its conjugate, and (derivatives of) chiral multi-vertex fields
$$\OO^{(\bfs\sigma,\bfs\sigma_*)}(\bfs{z}),\,\,(\bfs\sigma = (\sigma_1\cdots,\sigma_n),\,\bfs\sigma_* = (\sigma_{1*}\cdots,\sigma_{n*}),\,\bfs{z} = (z_1,\cdots,z_n),\,z_j\in D)$$
(its precise definition is given in Subsection~\ref{ss: O})
with the \emph{neutrality condition}
$$\sum_{j=1}^n (\sigma_j + \sigma_{j*}) = 0.$$
As a multivalued field, the bi-variant chiral bosonic field $\Phiplus_{(b)}(z,z_0)$ can be expressed in terms of 1-point formal field $\Phiplus_{(b)}$ as follows:
$$\Phiplus_{(b)}(z,z_0)=\Phiplus_{(b)}(z)-\Phiplus_{(b)}(z_0), \quad  \Phiplus_{(b)}(z)=\Phiplus_{(0)}(z)+ib\log\frac{w'(z)}{1-w(z)^2},$$
where $w$ is a conformal map from $(D,q_-,q_+)$ onto $(\mathbb{H},-1,1)$ and the formal 1-point field $\Phiplus_{(0)}$ (which can be interpreted  as a ``holomorphic" part of $\Phi_{(0)}$ in the sense that $\Phi_{(0)}(z)= 2\,\Re\,\Phiplus_{(0)}(z)$) has formal correlations
$$\E[\Phiplus_{(0)}(z)\Phiplus_{(0)}(z_0)] = \log\frac1{w(z)-w(z_0)},\quad \E[\Phiplus_{(0)}(z)\overline{\Phiplus_{(0)}(z_0)}] = \log(w(z)-\overline{w(z_0)}).$$
The chiral multi-vertex field $\OO^{(\bfs\sigma,\bfs\sigma_*)}(\bfs{z})$ can be interpreted as the OPE exponential of the formal field
$$i\sum \sigma_j \Phiplus_{(b)}(z_j)-\sigma_{j*}\Phiminus_{(b)}(z_j),$$
where $\Phiminus_{(b)} = \overline{\Phiplus_{(b)}}.$
It turns out that the formal fields $\OO^{(\bfs\sigma,\bfs\sigma_*)}$ are well-defined Fock space fields if and only if the neutrality condition holds.
Under the neutrality condition, multi-vertex fields $\OO^{(\bfs\sigma,\bfs\sigma_*)}$ are $\Aut(D,q_\pm)$-invariant primary fields.
See Subsection~\ref{ss: Lie} for the definition of conformal invariance.

In the chordal (radial) case, under the insertion of Wick's exponential
$$e^{\odot ia \Phiplus_{(0)}(p,q)} \,(p,q\in\pa D,p\ne q), \qquad \big(e^{\odot - a\, \Im\,\Phiplus_{(0)}(p,q)} \,(p\in\pa D, q\in D)\big),$$
with $a = \sqrt{2/\kappa}$ and $b=a(\kappa/4-1),$ all fields in the extended OPE family $\FF_{(b)}$ of $\Phi_{(b)}$ satisfy the ``field Markov property" with respect to chordal (radial) SLE filtration, respectively.
(See \cite{BB03,RBGW07} for the chordal case and \cite{BB04,Cardy04} for the radial case from the physics perspective, cf. \cite[Proposition~14.3]{KM11} and \cite[Theorem~1.1]{KM12}.)
The dipolar version of this theorem can be stated as follows.
(As we mentioned in Section~\ref{sec: intro}, special cases of the following theorem are well known in physics literature, e.g., \cite{BBH05}.)

\begin{theorem}\label{main X}
For the tensor product $X=X_1(z_1)\cdots X_n(z_n)$ of fields $X_j$ in the OPE family $\FF_{(b)}$ of $\Phi_{(b)},$ the non-random fields
\begin{equation} \label{eq: main X}
\E\,[e^{\odot i\frac 12a (\Phiplus_{(0)}(p,q_-)+ \Phiplus_{(0)}(p,q_+))} \, X] \qquad (a = \sqrt{2/\kappa},\, b=\sqrt{\kappa/8}-\sqrt{2/\kappa})
\end{equation}
are martingale-observables for dipolar $\SLE_\kappa.$
\end{theorem}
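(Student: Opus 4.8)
The plan is to fix the identity chart of the strip $\mathbb{S}$ and treat the inserted boundary vertex field together with the tensor $X$ as a single conformal field whose behaviour under the uniformizing maps $w_t$ is governed by the transformation laws of Subsection~\ref{ss: SLE MO}. Writing $\Psi(p)=e^{\odot i\frac12 a(\Phiplus_{(0)}(p,q_-)+\Phiplus_{(0)}(p,q_+))}$, I would first record that $\Psi(p)$ carries charge $a$ at $p$ and charge $-\tfrac12 a$ at each of $q_\pm$, so that the neutrality condition $a-\tfrac12 a-\tfrac12 a=0$ holds and $\Psi(p)$ is a genuine boundary primary. A short computation with $a=\sqrt{2/\kappa}$ and $b=\sqrt{\kappa/8}-\sqrt{2/\kappa}$ gives its boundary conformal dimension at $p$ as $h=\tfrac12 a(a-2b)=\frac{6-\kappa}{2\kappa}$, while the background charge $b$ fixes the central charge of $\FF_{(b)}$ to $c=1-12b^2=\frac{(3\kappa-8)(6-\kappa)}{2\kappa}$. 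These are exactly the Kac values at which a level-two degeneracy occurs.

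Next I would reduce the martingale-observable property to the vanishing of an Itô drift. By the definition in Subsection~\ref{ss: SLE MO}, $M_t=(M\,\|\,w_t^{-1})$, and since each factor of $M=\E[\Psi(p)\,X]$ is a differential, a pre-Schwarzian, or a Schwarzian form of known order, the transformation laws there express $M_t$ as explicit $w_t$-covariance factors times the strip correlator evaluated at $w_t(z_1),\dots,w_t(z_n)$. Applying Itô's formula along the dipolar Loewner flow $\dd w_t(z)=\coth_2(w_t(z))\,\dd t-\sqrt\kappa\,\dd B_t$, with $\dd\langle\xi\rangle_t=\kappa\,\dd t$, the process $M_t$ is a local martingale precisely when its $\dd t$-drift vanishes, i.e.\ when a second-order operator of the schematic form $\tfrac{\kappa}{2}\partial_\xi^2+\mathcal{L}_{v}$ annihilates the correlator, where $v=\coth_2$ is the dipolar Loewner vector field and $\mathcal{L}_{v}$ is the Lie derivative on Fock space fields introduced in Subsection~\ref{ss: Lie}.

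The heart of the argument is to evaluate $\mathcal{L}_{v}$ through the dipolar Ward identities for the $b$-modified theory with the Dirichlet boundary condition. In the strip with marked points $q_\pm=\pm\infty$, the field $\coth_2(\cdot)$ has a single pole at the tip $p$ and is tangent to $\partial\mathbb{S}$ at $\pm\infty$; the boundary Ward equation then rewrites $\mathcal{L}_{v}\,\E[\Psi(p)\,X]$ as the insertion of the Virasoro field $T_{(b)}$ integrated against $v$, which localizes to the residue at $p$, i.e.\ to the singular part of the operator product expansion of $T_{(b)}$ with $\Psi(p)$. Because $(h,c)$ sit at the degenerate values above, the level-two null relation $(L_{-2}-\tfrac{\kappa}{4}L_{-1}^2)\Psi(p)=0$ collapses the $L_{-2}$-coefficient of this residue into a $\partial_p^2$ term; the factor $2$ in the expansion $\coth_2(z)=2/z+O(z)$ combines with the coefficient $\tfrac{\kappa}{4}$ of the null relation to reproduce exactly the Itô term $\tfrac{\kappa}{2}\partial_\xi^2$ after identifying $\partial_p$ with $\partial_\xi$, so the drift operator $\tfrac{\kappa}{2}\partial_\xi^2+\mathcal{L}_{v}$ vanishes identically. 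As this reasoning is linear in $X$ and uses only the OPE with the generators of $\FF_{(b)}$, it applies to every tensor product $X=X_1(z_1)\cdots X_n(z_n)$, which yields the theorem.

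The step I expect to be the main obstacle is the dipolar Ward identity itself. Unlike the chordal and radial cases, the modification $\Phi_{(b)}=\Phi_{(0)}-2b\,\arg w'$ is built from the strip uniformizer and the one-leg operator is rooted at the two points $q_\pm$ simultaneously, so one must verify that the Schwarzian form $T_{(b)}$ reproduces precisely the $\coth_2$ drift and, crucially, that the two insertions at $q_\pm$ contribute no spurious drift. The latter should hold because $\coth_2$ is tangent to the boundary at $\pm\infty$ and the boundary dimensions there associated with $\psi_{0;1/2}$ are annihilated by the flow, but confirming this cancellation and checking that the surviving residue at $p$ is exactly the level-two combination for the stated $(a,b)$ is the delicate computational core; the remaining conformal bookkeeping is already prepared by the transformation laws of Subsection~\ref{ss: SLE MO}.
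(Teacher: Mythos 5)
Your proposal follows essentially the same route as the paper: It\^o's formula along the dipolar Loewner flow reduces the claim to the vanishing of a drift, and that drift vanishes because Ward's equation for the Loewner vector field combined with the level-two degeneracy $L_{-2}\Psi=\frac{1}{2a^2}L_{-1}^2\Psi$ (valid precisely when $2a(a+b)=1$) produces the BPZ--Cardy equation. The ``delicate computational core'' you flag --- the dipolar Ward identity in the presence of the two roots at $q_\pm$ and the cancellation of the resulting constant and boundary terms --- is exactly what the paper supplies in Propositions~\ref{A}, \ref{Ward}, \ref{level2} and \ref{Cardy}, so your outline is sound and matches the paper's argument.
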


%%%%%%%%%%%%%%%%%%%%%%%%%%%%%%%%%%%%%%%%%%%%%%%%%%%%%%%%%%%%%%%%%%%
\subsection{Boundary condition changing operators and rooted vertex fields}
The boundary condition changing operator on Fock space functionals/fields is the insertion of
\begin{equation} \label{eq: BC}
e^{\odot \frac12 ia (\Phiplus_{(0)}(p,q_-)+ \Phiplus_{(0)}(p,q_+))}.
\end{equation}
While this field does not belong to the extended OPE family $\FF_{(b)},$ we further extend the OPE family to contain the multi-vertex fields $\OO^{(\bfs{\sigma},\bfs{\sigma_*};\sigma_-,\sigma_+)}$ rooted at two points $q_-,q_+$ with the neutrality condition $(\sigma_-\!+\sigma_+\!+ \sum_{j=1}^n (\sigma_j + \sigma_{j*}) = 0)$ so that the field~\eqref{eq: BC} is represented as
$$\frac{\Psi(p)}{\E\,[\Psi(p)]},\qquad \Psi\equiv\Psi(\cdot;q_-,q_+):=\OO^{(a,0;-\frac12a,-\frac12a)}\in\FF_{(b)}.$$
Similarly as in the radial case, the definition of rooted multi-vertex fields can be obtained by normalizing multi-vertex fields
$$\OO^{(\bfs{\sigma},\bfs{\sigma_*})}\OO^{(\sigma_-)}(\eta_-)\OO^{(\sigma_+)}(\eta_+)$$
and taking a limit as $(\eta_-,\eta_+)$ approaches $(q_-,q_+).$
This rooting procedure also gives rise to the definition of the \emph{normalized tensor product} of rooted vertex fields as
$$\OO^{(\bfs{\sigma},\bfs{\sigma_*};\sigma_-,\sigma_+)}\star \OO^{(\bfs{\tau,\tau_*};\tau_-,\tau_+)}=\OO^{(\bfs{\sigma}+\bfs{\tau},\bfs{\sigma_*}+\bfs{\tau_*};\sigma_-+\tau_-,\sigma_++\tau_+)},$$
where $\bfs{\sigma,\sigma_*\tau,\tau_*}$ are divisors, maps from $D$ to $\R$ which take the value $0$ at all but finitely many points.
Like the multi-vertex fields, the rooted multi-vertex field $\OO^{(\bfs\sigma,\bfs\sigma_*;\sigma_-,\sigma_+)}(\bfs{z})$ can be interpreted as the OPE exponential of the field
\begin{equation} \label{eq: rooted boson}
i\sigma_- \Phiplus_{(b)}(q_-) + i\sigma_+\Phiplus_{(b)}(q_+)+i\sum \sigma_j \Phiplus_{(b)}(z_j)-\sigma_{j*}\Phiminus_{(b)}(z_j),
\end{equation}
where $\Phiplus_{(b)}(q_\pm) = \Phiplus_{(0)}(q_\pm).$
In the above expression, we do not need the terms $\Phiminus_{(0)}(q_\pm)$ since $\Phiplus_{(0)}(q_\pm)$ and $\Phiminus_{(0)}(q_\pm)$ are not linearly independent.
Indeed, $\Phiminus_{(0)}(q_\pm)=-\Phiplus_{(0)}(q_\pm)$ because $\Phi_{(0)}(q_\pm) = 0.$
Under the neutrality condition, rooted vertex fields $\OO^{(\bfs\sigma,\bfs\sigma_*;\sigma_-,\sigma_+)}$ are $\Aut(D,q_\pm)$-invariant primary fields with conformal dimensions $[\bfs{h}, \bfs{h}_*; h_-,h_+]:$
$$h_j=\frac{\sigma_j^2}{2}-\sigma_j b, \quad h_{j*}=\frac{\sigma_{j*}^2}{2}-\sigma_{j*} b , \quad h_\pm=\frac{\sigma_\pm^2}{2}.$$
We extend $\FF_{(b)}$ by adding the generators, \eqref{eq: rooted boson} and the rooted vertex fields with the neutrality conditions.
We call this extended collection of fields the \emph{extended OPE family} of $\Phi_{(b)}.$
For a rooted vertex field $\OO\equiv\OO^{(\bfs{\sigma},\bfs{\sigma_*};\sigma_-,\sigma_+)}$ with the neutrality condition,  $\E\,[\Psi(p)\OO]$ is not well-defined because both $\Psi(p) \equiv \Psi(p; q_-,q_+)$ and $\OO$ have nodes at $q_\pm.$
However, Theorem~\ref{main X} can be modified for a rooted vertex field with the neutrality condition (cf. \cite[Theorem~1.2]{KM12} for its radial version).

\begin{theorem}\label{main O}
For a rooted vertex field $\OO\equiv\OO^{(\bfs{\sigma},\bfs{\sigma_*};\sigma_-,\sigma_+)}$ with the neutrality condition, the non-random field
$$
\frac{\E[\Psi(p)\star\OO]}{\E[\Psi(p)]}
$$ is a dipolar $\SLE_\kappa$ martingale-observable.
\end{theorem}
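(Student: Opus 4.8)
\emph{Proof proposal.} My plan is to deduce Theorem~\ref{main O} from Theorem~\ref{main X} by realizing the rooted correlator as a normalized limit of correlators to which Theorem~\ref{main X} applies. First, using the $\star$-multiplication rule together with $\Psi=\OO^{(a,0;-\frac12a,-\frac12a)}$, the normalized tensor product $\Psi(p)\star\OO$ is the rooted vertex field obtained by adding holomorphic charge $a$ at $p$ and replacing the root charges $\sigma_\pm$ by $\sigma_\pm-\frac12a$ at $q_\pm$. Its divisor still satisfies the neutrality condition, since
$$\Big(\sigma_--\tfrac12a\Big)+\Big(\sigma_+-\tfrac12a\Big)+a+\sum_{j=1}^n(\sigma_j+\sigma_{j*})=\sigma_-+\sigma_++\sum_{j=1}^n(\sigma_j+\sigma_{j*})=0,$$
so $\E[\Psi(p)\star\OO]$ is a well-defined non-random field and $M:=\E[\Psi(p)\star\OO]/\E[\Psi(p)]$ is an $\Aut(D,q_\pm)$-invariant conformal field. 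The clash that both $\Psi(p)$ and $\OO$ carry nodes at $q_\pm$ is precisely what $\star$ is built to resolve.

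Next I would unfold the rooting of $\OO$. Writing the rooted field as the normalized limit of genuine (non-rooted) chiral multi-vertex fields,
$$\OO=\lim_{(\eta_-,\eta_+)\to(q_-,q_+)}C(\eta_-,\eta_+)\;\OO^{(\bfs\sigma,\bfs\sigma_*)}\,\OO^{(\sigma_-)}(\eta_-)\,\OO^{(\sigma_+)}(\eta_+),$$
each approximant $X_\eta:=\OO^{(\bfs\sigma,\bfs\sigma_*)}\OO^{(\sigma_-)}(\eta_-)\OO^{(\sigma_+)}(\eta_+)$ lies in $\FF_{(b)}$ and, inheriting the neutrality $\sum_j(\sigma_j+\sigma_{j*})+\sigma_-+\sigma_+=0$ of $\OO$, is an admissible tensor factor for Theorem~\ref{main X}. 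The normalizer $C$ is the non-random factor cancelling the operator-product singularity as the charges $\sigma_\pm$ at $\eta_\pm$ collide with the root charges $-\frac12a$ of $\Psi$ at $q_\pm$; fixed once in the reference chart of $D$, it is a deterministic function of $(\eta_-,\eta_+)$ alone. Since $e^{\odot i\frac12a(\Phiplus_{(0)}(p,q_-)+\Phiplus_{(0)}(p,q_+))}=\Psi(p)/\E[\Psi(p)]$, Theorem~\ref{main X} tells us that for each admissible $(\eta_-,\eta_+)$ the process
$$M^\eta_t:=\Big(\tfrac{\E[\Psi(p)\,X_\eta]}{\E[\Psi(p)]}\,\Big\|\,w_t^{-1}\Big)$$
is a local martingale.

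The heart of the argument is to pass to the limit $\eta_\pm\to q_\pm$ inside the conformal transport. Here I would use two structural facts: that $\OO$ is an $\Aut(D,q_\pm)$-invariant primary field with the stated boundary dimensions, and that $q_-,q_+$ are fixed by the entire dipolar Loewner flow. The latter is special to the dipolar case: every $w_t$ sends $q_\pm$ to $\pm\infty$ in $\mathbb{S}$, and near these ends the flow \eqref{eq: SLE} is asymptotically the translation $w\mapsto w\pm t-\xi_t$ (because $\coth_2(w)\to\pm1$ as $\Re\,w\to\pm\infty$), so the boundary conformal factors that the rooting introduces at $q_\pm$ have the \emph{same} ($t$-independent) asymptotics for every $w_t$. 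Consequently the transported normalizer equals $C(\eta_-,\eta_+)$ up to a $t$-independent constant, each $C(\eta_-,\eta_+)M^\eta_t$ is again a local martingale, and, letting $\eta_\pm\to q_\pm$,
$$\Big(\tfrac{\E[\Psi(p)\star\OO]}{\E[\Psi(p)]}\,\Big\|\,w_t^{-1}\Big)=\text{const}\cdot\lim_{(\eta_-,\eta_+)\to(q_-,q_+)}C(\eta_-,\eta_+)\,M^\eta_t.$$
Because the Itô drift of each $C(\eta_-,\eta_+)M^\eta_t$ vanishes and is computed from the field and the flow by an expression depending continuously on $(\eta_-,\eta_+)$, the drift of the limit vanishes as well; that is, $M_t=(M\,\|\,w_t^{-1})$ is a local martingale, which is the assertion.

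The step I expect to be the main obstacle is exactly this interchange of the rooting limit with the Loewner transport at the fixed boundary points $q_\pm$: one must prove the normalized limit defining $\Psi(p)\star\OO$ exists, identify $C$ explicitly, verify that its only chart-dependence is the $t$-independent boundary factor forced by the dimensions $h_\pm=\sigma_\pm^2/2$, and control the convergence of the Itô drifts uniformly enough that vanishing survives the limit. Once the primary/$\Aut(D,q_\pm)$-invariance of $\OO$ and the fixed-point property $w_t(q_\pm)=\pm\infty$ are in hand, the cancellation of the drift coming from the variables $z_j$ and the seed $p$ is the same level-two degeneracy computation that establishes Theorem~\ref{main X}.
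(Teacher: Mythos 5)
Your strategy---realizing $\E[\Psi(p)\star\OO]/\E[\Psi(p)]$ as a normalized limit of observables covered by Theorem~\ref{main X} and passing the vanishing of the It\^o drift through the rooting limit---is genuinely different from the paper's. The paper proves a rooted Ward identity carrying an extra term $(h_-+h_+)\E[\OO]$, deduces from it a rooted BPZ--Cardy equation (Proposition~\ref{Cardy4O}) with the extra term $(\wh{h}_-+\wh{h}_+)\wh{\E}_\xi[\OO]$, and in the It\^o computation uses $g_t'(q_\pm)=e^{-t}$ to produce the compensating deterministic drift $-(\wh{h}_-+\wh{h}_+)M_t\,\dd t$. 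Your route would let the rooting limit do the work that these extra boundary terms do.

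The pivotal step, however, is justified incorrectly even though the claim itself is true. The asymptotic translation $w\mapsto w\pm t-\xi_t$ at the two ends of the strip is \emph{not} conformally trivial there: in the natural boundary coordinate at $q_\pm$ (equivalently, in the $(\mathbb{H},-1,1)$ picture) it is the scaling $g_t'(q_\pm)=e^{-t}$, so the boundary conformal factors at $q_\pm$ are strongly $t$-dependent---this is exactly the origin of the extra terms in the paper's argument, and your heuristic would erase them. What rescues the identity $M_t=\lim_{\eta}C(\eta_-,\eta_+)M^\eta_t$ with a $t$-independent normalizer is an exact cancellation of three $t$-dependent exponents: the factor $(g_t'(q_\pm))^{\wh{h}_\pm}$ carried by the rooted observable, the factor $(g_t'(\eta_\pm))^{\lambda_\pm}$ carried by the unrooted approximant, and the rescaling $e^{t\wh{\mu}_\pm}$ of the normalizer under $g_t$; these cancel precisely because $\wh{h}_\pm=\lambda_\pm+\wh{\mu}_\pm$, i.e., because the rooted field is a well-defined boundary differential of dimension $\wh{h}_\pm$ at $q_\pm$. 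You must carry out this bookkeeping explicitly---it is the same computation that produces the paper's $(\wh{h}_-+\wh{h}_+)$ corrections, so the content of the paper's proof is not actually avoided. Two further points need attention: Theorem~\ref{main X} must be available for the unrooted multi-vertex approximants, which have boundary nodes at $\eta_\pm$ and are single neutral multi-vertex fields rather than tensor products of individually well-defined factors (the paper only asserts this extension); and the interchange of the rooting limit with the drift requires convergence of $m^\eta(\xi,t)$ to $m(\xi,t)$ together with two $\xi$-derivatives and one $t$-derivative, locally uniformly---true here because the correlation functions are explicit algebraic expressions, but it should be stated rather than inferred from continuity alone.
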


Theorems~\ref{main X} -- \ref{main O} can be extended to the fields in the extended OPE family of $\Phi_{(b)}.$

%%%%%%%%%%%%%%%%%%%%%%%%%%%%%%%%%%%%%%%%%%%%%%%%%%%%%%%%%%%%%%%%%%%
\subsection{Examples of dipolar SLE martingale-observables}
We use conformal field theory to present the proof for the restriction property of dipolar $\SLE_{8/3}:$
for a fixed compact hull $K$ (i.e., $K$ is a compact set such that $\mathbb{S}\sm K$ is a simply connected subdomain of $\mathbb{S}$) with $K \cap (\R + \pi i) = \emptyset,$
the dipolar $\SLE_{8/3}$ path in $(\mathbb{S},0,-\infty,\infty)$ conditioned to avoid $K$ has the same distribution as the dipolar  $\SLE_{8/3}$ path in $(\mathbb{S}\sm K,0,-\infty,\infty).$
It is equivalent to Theorem~\ref{restriction} below.
To state it, let us recall the definition of the strip capacity (e.g., see \cite{Zhan04}) of a compact hull $K.$
For a compact hull $K$ with $K \cap (\R + \pi i) = \emptyset,$
there is a unique conformal transformation from $(\mathbb{S} \sm K ,-\infty,\infty)$ onto $(\mathbb{S},-\infty,\infty)$ such that
$$\lim_{z\to\pm\infty} \psi_K(z) - z = \pm \, s$$
for some $s\ge 0.$
Here, $z\to\pm\infty$ means that $z\in \mathbb{S}$ and $\Re\, z \to\pm\infty.$
This $s$ is called the strip capacity of $K$ and denoted by $\scap(K).$

\begin{theorem}\label{restriction} For a given compact hull $K,$
\begin{equation} \label{eq: restriction}
\mathbb{P}(\SLE_{8/3} \textrm{ path  avoids }   K)
=\psi'_K(0)^\lambda e^{-2\mu\,\scap(K)}, \quad(\lambda = 5/8, \,\mu=5/96).
\end{equation}
\end{theorem}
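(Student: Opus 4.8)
The plan is to exhibit the right-hand side of~\eqref{eq: restriction} as the initial value $M_0$ of a bounded local martingale $M_t$ built from the one-leg partition function $Z_D:=\E_D[\Psi(p)]$, to identify the terminal value $M_\infty=\mathbf{1}\{\gamma\cap K=\emptyset\}$, and to conclude by optional stopping.

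First I would run dipolar $\SLE_{8/3}$ in $(\mathbb{S},0,\R+\pi i)$ with $q_\pm=\pm\infty$ and, on the event $\gamma[0,t]\cap K=\emptyset$, introduce the image hull $K_t:=w_t(K)$ together with its normalized uncovering map $\phi_t:=\psi_{K_t}\colon\mathbb{S}\sm K_t\to\mathbb{S}$. Set
$$M_t:=\frac{Z_{\mathbb{S}\sm(K\cup\gamma[0,t])}(\gamma_t)}{Z_{\mathbb{S}\sm\gamma[0,t]}(\gamma_t)}.$$
Since $\Psi$ is an $\Aut(\mathbb{S},q_\pm)$-invariant boundary differential, pulling numerator and denominator back by $w_t$ cancels all conformal factors of $w_t$ at the tip and at $q_\pm$, leaving $M_t=Z_{\mathbb{S}\sm K_t}(0)/Z_{\mathbb{S}}(0)$, a functional of $K_t$ alone whose value at $t=0$ is $Z_{\mathbb{S}\sm K}(0)/Z_{\mathbb{S}}(0)$.

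The local-martingale property is where the conformal field theory enters. With $a=\sqrt{2/\kappa}$ and $b=\sqrt{\kappa/8}-\sqrt{2/\kappa}$ the field $\Psi=\OO^{(a,0;-a/2,-a/2)}$ is exactly the level-two degenerate boundary field $\psi_{1;2}$: its dimension at $p$ is $h=\tfrac12 a^2-ab=\tfrac58$, and these are precisely the weights for which $Z=\E[\Psi]$ obeys the level-two null-vector equation. That equation is the identity that makes correlators of $\FF_{(b)}$ martingale-observables in Theorem~\ref{main X}; applying It\^o's formula to the pulled-back ratio, the drift equals the null-vector operator applied to $Z$ and hence vanishes. (Equivalently one may check directly that $\phi_t'(0)$ and $\scap(K_t)$ solve the dipolar Loewner ODEs and that the drift of $\phi_t'(0)^{\lambda}e^{-2\mu\,\scap(K_t)}$ cancels at $\kappa=8/3$; this is the more computational route.) To read off the exponents I would expand $Z$ as the vertex-field one-point function times its $b$-dependent prefactor and transform by $\phi_t$: the node at $p$ gives $\phi_t'(0)^{\lambda}$ with $\lambda=\tfrac12 a^2-ab=\tfrac58$, while the two marked points at $\pm\infty$, whose local coordinates are scaled by $e^{-\scap(K_t)}$ under $\phi_t$, give $e^{-2\mu\,\scap(K_t)}$ with $2\mu=\tfrac14 a^2-b^2=\tfrac5{48}$; here $\tfrac14 a^2=h_-+h_+$ comes from the root weights $h_\pm=\tfrac18 a^2$ and the extra $-b^2$ is the conformal anomaly of the central-charge modification. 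Thus $M_0=\psi_K'(0)^{5/8}e^{-(5/48)\scap(K)}$, as claimed.

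Finally, $0\le M_t\le1$ because $\scap(K_t)\ge0$ and $\psi_{K_t}'(0)\le1$ (a Schwarz-type bound for uncovering maps), so $M_t$ is a bounded martingale. On $\{\gamma\cap K=\emptyset\}$ the hull $K_t$ is pushed onto the target arc, $\scap(K_t)\to0$ and $\psi_{K_t}'(0)\to1$, whence $M_\infty=1$; on the complementary event the tip reaches $K_t$, $\psi_{K_t}'(0)\to0$ and $M_\infty=0$. Optional stopping gives $\mathbb{P}(\gamma\cap K=\emptyset)=\E[M_\infty]=M_0$, which is~\eqref{eq: restriction}. The main obstacle is the exponent identification, namely rigorously extracting the $-b^2$ anomaly at the two punctures $q_\pm=\pm\infty$, where the $b$-modification of $\Psi$ is singular and must be renormalized by boundary puncture operators; a secondary difficulty is the boundary-value analysis $M_\infty=\mathbf{1}\{\gamma\cap K=\emptyset\}$, which relies on regularity of the $\SLE_{8/3}$ trace and on the monotonicity bound $\psi_{K_t}'(0)\le1$.
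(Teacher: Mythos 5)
Your proposal follows essentially the same route as the paper: the martingale is the normalized one-leg partition function of the slit domain (the paper writes it as $M_t=(1-\xi_t^2)^\lambda\,\E(\Psi^\eff_{\Omega_t}(\xi_t)\,\|\,\id_{\Omega_t})$ with $\Psi^\eff=\Psi\,\PP(q_-)\PP(q_+)$, which is exactly your ratio $Z_{\mathbb{S}\sm K_t}(0)/Z_{\mathbb{S}}(0)$ after uniformization); the exponents $\lambda=\tfrac12a^2-ab$ and $\mu=\tfrac18a^2-\tfrac12b^2$ are read off from the dimensions of $\Psi^\eff$ at the tip and at $q_\pm$ exactly as you do; and the conclusion is by boundedness, identification of the terminal value, and optional stopping, with the terminal-value analysis treated at the same level of detail as in the paper (which defers it to the chordal argument).

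The one place your main-line argument goes wrong is the justification of driftlessness. The null-vector operator applied to $Z_{\mathbb{S}\sm K_t}$ does not vanish, and the drift of $M_t$ is not that quantity. What Lemma~\ref{restriction1} actually establishes is that, after combining It\^o's formula, Ward's equation transported from $\Omega_t$ back to the reference domain by $h_t$ (which is where the Schwarzian anomaly enters), and the level two degeneracy equation for $\Psi^\eff$, the drift equals $\frac{c}{24}(1-\xi_t^2)^2S_{h_t}(\xi_t)\,M_t\,\dd t$. This is nonzero for general $\kappa$ and vanishes only because $c=1-12b^2=0$ at $\kappa=8/3$; as literally stated, your main-line reasoning would ``prove'' the restriction property for every $\kappa$. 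Your parenthetical (checking directly that the drift of $\phi_t'(0)^\lambda e^{-2\mu\,\scap(K_t)}$ cancels at $\kappa=8/3$) and your closing remarks show you are aware of the right mechanism, but the computation of the anomaly term $\frac{c}{24}S_{h_t}$ is the actual content of the key lemma and cannot be replaced by an appeal to the null-vector equation alone.
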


In the half-plane uniformization, \eqref{eq: restriction} reads as
\begin{equation} \label{eq: restriction'}
\mathbb{P}(\SLE_{8/3} \textrm{ path  avoids }   K)
=\psi'_K(0)^\lambda (\psi'_K(-1)\psi'_K(1))^\mu,
\end{equation}
where $K$ is a fixed compact hull with $ \pa K \cap \R \subseteq (-1,1)\sm\{0\}$ and $\psi_K$ is the conformal transformation from $(\mathbb{H} \sm K ,-1,1)$ onto $(\mathbb{H},-1,1)$ such that $\psi'_K(-1)=\psi'_K(1).$
Compare \eqref{eq: restriction'} to the restriction property of chordal $\SLE_{8/3},$ radial $\SLE_{8/3},$ see \cite[Theorem~6.1]{LSW03}, \cite[Theorem~6.26]{Lawler05}, respectively, and to the one-sided restriction property of chordal $\SLE_{8/3}(\frac12(\kappa-6)),$ see \cite[Theorem~8.4]{LSW03}.
The restriction exponents $\lambda$ and $\mu$ can be explained in term of conformal dimensions of $\Psi.$
Let us introduce the \emph{effective boundary condition changing operator} $\Psi^\eff:$
\begin{equation} \label{eq: eff}
\oneleg^\eff := \oneleg \,\PP(q_-)\PP(q_+),
\end{equation}
where $\PP(q_\pm)$ is the ``boundary puncture operator" defined as a $-\frac12b^2$-boundary differential at $q_\pm$ and $\PP(q_\pm) \equiv 1$ in the identity chart of $\mathbb{H}.$
Then
$$\lambda = h(\Psi):=\frac{a^2}2-ab = \frac{6-\kappa}{2\kappa},\qquad \mu=h_\pm(\Psi^\eff):=\frac{a^2}8-\frac{b^2}2 = \frac{(\kappa-2)(6-\kappa)}{16\kappa}.$$

 As an application of the restriction property of dipolar $\SLE_{8/3},$ we prove a dipolar version of Friedrich-Werner's formula.

\begin{theorem}\label{FW}
For distinct $x_j \in \R\sm\{0\} (j=1,\cdots,n)$ and for $b=-\sqrt3/6,$
$$\lim_{\ve\to 0} \ve^{-2n} \mathbb{P} (\SLE_{8/3} \textrm{ hits all slits } [x_j,x_j+i\ve\sqrt2] )=\wh{\E}[T(x_1) \cdots T(x_n) \,\|\,\id_{\mathbb{S}}].
$$
\end{theorem}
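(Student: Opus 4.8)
The plan is to follow the strategy that works in the chordal case (Friedrich-Werner) and adapt it to the dipolar setting using the restriction property of dipolar $\SLE_{8/3}$ established in Theorem~\ref{restriction}. The key object is the one-leg boundary operator $\Psi$ together with its associated Virasoro field $T$, and the central idea is that multiple insertions of $T$ at boundary points can be extracted from the derivative of the restriction probability with respect to small perturbations of the domain near those points. First I would recognize that a slit $[x_j, x_j + i\ve\sqrt2]$ is, up to $O(\ve^3)$, a compact hull of strip capacity $\scap \sim c\ve^2$ attached at $x_j$, and that $\psi_K'(x_j) = 1 + O(\ve^2)$ with a computable leading coefficient; thus $\ve^{-2}$ times the event probability should, in the limit, produce a linear functional that inserts one factor of $T$ at $x_j$.

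Second, I would set up the correlation-function side. Since $b = -\sqrt3/6$ corresponds to $\kappa = 8/3$ (check: $b = \sqrt{\kappa/8} - \sqrt{2/\kappa}$ gives $b=-\sqrt3/6$ at $\kappa=8/3$), the field $\Psi = \OO^{(a,0;-\frac12a,-\frac12a)}$ is exactly the one-leg operator whose normalized correlators are dipolar $\SLE_{8/3}$ martingale-observables by Theorem~\ref{main X} and Theorem~\ref{main O}. The Virasoro field $T$ in the extended OPE family of $\Phi_{(b)}$ is the Schwarzian form of order $\mu = c/12$ that generates the Ward identities; its insertion $\wh{\E}[T(x_1)\cdots T(x_n)]$ (under the $\Psi$-insertion, indicated by the hat) is precisely what governs the $n$-th order response of the restriction probability to $n$ localized boundary deformations. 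I would express $\wh\E[T(x_1)\cdots T(x_n)]$ via repeated application of the Ward/Cauchy identity (rooted insertion) relating a factor of $T(x)$ to the variation of $\log\E[\Psi]$ under the vector field dual to inserting a hull near $x$.

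Third comes the heart of the argument: matching the two sides order by order in $\ve$. I would write the probability that $\SLE_{8/3}$ hits all $n$ slits via inclusion--exclusion in terms of avoidance probabilities of unions of slits, then apply the restriction formula~\eqref{eq: restriction} to each avoidance probability. The product structure $\psi_K'(0)^\lambda e^{-2\mu\scap(K)}$ for a union of small disjoint slits expands, through $\lambda = h(\Psi)$ and $\mu = h_\pm(\Psi^\eff)$, into a sum over the slits of their individual contributions plus cross terms; the cross terms at leading order $\ve^{2n}$ are exactly the connected correlations that assemble into $\wh\E[T(x_1)\cdots T(x_n)]$. The combinatorics of inclusion--exclusion should cancel all the lower-order (disconnected) pieces, leaving the fully connected $n$-point function, which by the Ward identities equals the Virasoro correlator.

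The main obstacle I anticipate is the careful asymptotic control of the restriction functional $\psi_K'(0)^\lambda e^{-2\mu\scap(K)}$ for a hull $K = \bigcup_j [x_j, x_j+i\ve\sqrt2]$ consisting of several small slits: one must show that the error terms are genuinely $o(\ve^{2n})$ uniformly, and that the $\psi_K'(0)$ and $\scap(K)$ expansions for a multi-slit hull combine correctly to reproduce the $T$-insertions, including the precise normalization $i\ve\sqrt2$ that fixes the factor $\ve^{-2n}$. Establishing that the deformation vector field generated by a single slit acts on $\log\E[\Psi]$ exactly as an insertion of $T$ — i.e. identifying the restriction exponents $\lambda,\mu$ with the bulk and boundary conformal dimensions of $\Psi$ and $\Psi^\eff$ so that the Ward identity reproduces the Schwarzian form of order $\mu=c/12$ — is the step requiring the most care, and I would handle it by differentiating the half-plane form~\eqref{eq: restriction'} in the hull and comparing with the conformal-field-theoretic variation of $\E[\Psi]$ computed from the explicit vertex-field correlators.
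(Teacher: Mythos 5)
Your proposal assembles the right ingredients (the restriction property of dipolar $\SLE_{8/3}$, the identification of $\lambda,\mu$ with conformal dimensions of $\oneleg$ and $\oneleg^\eff$, Ward identities for $T$), but the central step is asserted rather than proved, and the route you choose makes that step genuinely hard. You propose to expand $\mathbb{P}(\textrm{hit all slits})$ by inclusion--exclusion over avoidance probabilities of \emph{unions} of slits and to read off the answer from the expansion of $\psi_{K_S}'(0)^{\lambda}e^{-2\mu\,\scap(K_S)}$ for multi-slit hulls $K_S$. To extract the order-$\ve^{2n}$ coefficient after the alternating sum you would need the expansion of the multi-slit capacity and derivative to order $\ve^{2|S|}$ for every subset $S$, i.e.\ all multi-slit interaction coefficients; the claim that what survives ``assembles into $\wh{\E}[T(x_1)\cdots T(x_n)]$'' is essentially the statement of the theorem and is not derived from anything. (Note also that the limit is the \emph{full} correlator, not a connected/cumulant one --- for well-separated slits the hitting events decorrelate and $\wh\E[T(x_1)T(x_2)]\to\wh\E[T(x_1)]\,\wh\E[T(x_2)]$ --- so the ``connected correlations'' language does not match the target. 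A further small confusion: the exponent $\mu=5/96=h_\pm(\oneleg^\eff)$ is not $c/12$, which vanishes at $\kappa=8/3$.)

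The paper's proof sidesteps all multi-slit analysis by inducting on the number of slits. One writes $\mathbb{P}(x,\bfs{x})=\mathbb{P}(\bfs{x})-(1-\mathbb{P}(x))\,\mathbb{P}(\bfs{x}\,|\,\neg x)$, uses the one-slit restriction formula \eqref{eq: FW2} for $1-\mathbb{P}(x)$, and --- crucially --- uses the restriction property to identify the law conditioned on avoiding the single slit $[x,x+i\ve\sqrt2]$ with dipolar $\SLE_{8/3}$ in the slit domain, so that $\mathbb{P}(\bfs{x}\,|\,\neg x)$ is the already-constructed $n$-point quantity transported by the single slit map $\psi$ as in \eqref{eq: FW3}. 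Sending $\ve\to0$ then requires only the first-order asymptotics of one slit map and produces the recursion $T(0;x,\bfs{x})=(\LL(v_x,\bar{\mathbb{S}})+\mu)\,T(0;\bfs{x})$ with $v_x(\zeta)=\frac12\coth_2(x-\zeta)$, which is matched term-by-term against the identical recursion \eqref{eq: recursion4T} satisfied by $R(\xi;\bfs{x})=\E[\oneleg^\eff(\xi)T(x_1)\cdots T(x_n)]$ via Ward's equations; equality then follows by induction from the trivial case $n=0$. To repair your argument you would either have to supply the full multi-slit expansion and the combinatorial identity it must satisfy, or reorganize it into this one-slit-at-a-time conditioning, which is where the restriction property actually does its work.
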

Compare Theorem~\ref{FW} to its chordal and radial version (see \cite[Proposition~1]{FW03}  and \cite[Theorem~4.4]{KM12}, respectively).

 In the last subsection we identify the probability that $z(\in D)$ is swallowed by the $\SLE_\kappa (\kappa >4)$ hulls and the probability that $z$ is to the left (right) of the $\SLE_\kappa$ paths with correlations of primary observables (Cardy-Zhan's observables) by the method of ``screening."

%%%%%%%%%%%%%%%%%%%%%%%%%%%%%%%%%%%%%%%%%%%%%%%%%%%%%%%%%%%%%%%%%%%
\bs\section{Dipolar CFT}
%%%%%%%%%%%%%%%%%%%%%%%%%%%%%%%%%%%%%%%%%%%%%%%%%%%%%%%%%%%%%%%%%%%

After we discuss central charge modifications of the Gaussian free field in a simply connected domain $D$ with two marked boundary points $q_\pm,$ we define a dipolar version of Ward's  functionals in terms of Lie derivatives.
Based on this approach, we derive Ward's equations in the dipolar case.
For those who are not familiar with some of the definitions and concepts developed in  \cite[Lectures~1, 3~--~5, 7]{KM11}),  we review them in Appendix.

%%%%%%%%%%%%%%%%%%%%%%%%%%%%%%%%%%%%%%%%%%%%%%%%%%%%%%%%%%%%%%%%%%%
\subsection{Central charge modification} \label{ss: c modify}
For a given simply connected domain $D$ with two marked points $q_\pm \in \pa D$, we consider a conformal transformation
$$w:(D,q_-,q_+) \to (\mathbb{H} ,-1,1)$$
from $D$ onto the half-plane $\mathbb{H}=\{z\in\C\,|\, \Im\,z >0\}.$
For a parameter $b = \sqrt{\kappa/8}-\sqrt{2/\kappa},$ we define the central charge modifications $\Phi\equiv\Phi_{(b)}$ of the Gaussian free field $\Phi_{(0)}$ with the Dirichlet boundary condition by
$$\Phi_{(b)}=\Phi_{(0)} +\varphi, \qquad\varphi=-2b \arg (\frac{w'}{1-w^2}).$$
It is well-defined since $\varphi$ does not depend on the choice of $w.$
We also define the current field $J\equiv J_{(b)}$ by
$$J_{(b)}=\pa\Phi_{(b)}=J_{(0)} +j,\qquad j=ib( \frac{w''}{w'} +\frac{2ww'}{1-w^2}).$$
The current field is an $\Aut(D,q_-,q_+)$-invariant pre-Schwarzian form of order $ib.$
Furthermore, the OPE family $\FF_{(b)}$ is $\Aut(D,q_-,q_+)$-invariant.
Compare the following proposition to \cite[Proposition~10.1]{KM11}.
See Subsections~\ref{ss: stress tensor}~--~\ref{ss: Virasoro field} for the definitions of a stress tensor and the Virasoro field.

\begin{prop}
The bosonic field $\Phi_{(b)}$ has a stress tensor, and the Virasoro field is given by
\begin{equation}
T\equiv T_{(b)}=-\frac{1}{2}J*J +ib\pa J.
\end{equation}
\end{prop}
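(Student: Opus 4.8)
The plan is to follow the chordal argument of \cite[Proposition~10.1]{KM11}, exploiting that the modification is non-random. Recall that, by definition, a symmetric pair $(A,\bar A)$ with $A$ holomorphic is a stress tensor for the OPE family $\FF_{(b)}$ precisely when Ward's identity holds in residue form, i.e. the principal part of the OPE $A(\zeta)X(z)$ reproduces the holomorphic Lie derivative $\LL_v^+ X$ for every $X\in\FF_{(b)}$. Since both the Lie derivative and the residue operator obey Leibniz's rule and commute with $\pa,\bp$, it suffices to verify this on the generators of $\FF_{(b)}$: the current $J\equiv J_{(b)}$, its conjugate, and the vertex fields $\VV^\alpha$. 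I would take $A=T_{(b)}=-\tfrac12 J*J+ib\pa J$ as the candidate and $\bar A=\overline{T_{(b)}}$.

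First I would reduce to the Dirichlet Gaussian free field. The field $\Phi_{(0)}$ has the stress tensor $A_{(0)}=-\tfrac12 J_{(0)}*J_{(0)}$ (a bulk property, with $J_{(0)}(\zeta)J_{(0)}(z)\sim -(\zeta-z)^{-2}$, independent of the boundary data), and $J=J_{(0)}+j$ with $j=ib(w''/w'+2ww'/(1-w^2))$ a non-random holomorphic function. Because the OPE with a non-random field is ordinary multiplication, expanding gives
\begin{equation*}
T_{(b)}=A_{(0)}+ib\,\pa J_{(0)}-j\,J_{(0)}-\tfrac12 j^2+ib\,j',
\end{equation*}
so holomorphicity of $T_{(b)}$ is inherited from that of $A_{(0)}$, $J_{(0)}$ and the (holomorphic) non-random corrections; here one uses that $j=\pa\varphi$ is holomorphic, which is exactly where the specific dipolar modification $\varphi=-2b\arg(w'/(1-w^2))$ enters.

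Next I would verify the two defining OPEs. For the current, computing $T_{(b)}(\zeta)J(z)$ from $J_{(0)}(\zeta)J_{(0)}(z)\sim-(\zeta-z)^{-2}$, the term $ib\,\pa J$ produces a cubic pole proportional to $b$ while $-\tfrac12 J*J$ produces the double pole $J(z)(\zeta-z)^{-2}$ and the simple pole $\pa J(z)(\zeta-z)^{-1}$; this is precisely the OPE of a pre-Schwarzian form of order $ib$, matching $\LL_v^+ J$. For the vertex fields, $T_{(b)}(\zeta)\VV^\alpha(z)$ must reproduce the primary transformation law: the double contraction of the two currents in $-\tfrac12 J*J$ with $\VV^\alpha=e^{*\alpha\Phi}$ (using $J_{(0)}(\zeta)\VV^\alpha(z)\sim-\alpha(\zeta-z)^{-1}\VV^\alpha(z)$) yields the quadratic-in-$\alpha$ part of the conformal dimension, and the background charge $ib\,\pa J$ together with the non-random current $j$ supplies the $\alpha b$-linear correction, so that the double pole is $h_\alpha\VV^\alpha$ and the simple pole is $\pa\VV^\alpha$. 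Matching these against $\LL_v^+\VV^\alpha$ completes Ward's identity on the generators; by the Leibniz argument it then holds on all of $\FF_{(b)}$, so $(T_{(b)},\overline{T_{(b)}})$ is a stress tensor and, by definition, $T_{(b)}$ is the Virasoro field.

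The main obstacle is the vertex-field OPE: one must check that the quadratic double-contraction term, the single-contraction cross terms against the non-random $j$, and the cubic-pole contribution of $ib\,\pa J$ assemble into exactly the primary OPE with the stated dimension, leaving no anomalous residual pole and producing precisely $\pa\VV^\alpha$ at first order. Conceptually this is the same bookkeeping as in the chordal case, since $j$ enters only as a non-random holomorphic multiplier; the sole genuinely dipolar inputs are that this explicit $j$ is holomorphic and that $\FF_{(b)}$ is $\Aut(D,q_\pm)$-invariant, both already established above, so the computation of \cite[Proposition~10.1]{KM11} transfers essentially verbatim.
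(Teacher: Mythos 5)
There is a genuine gap, and it is structural rather than computational: you conflate the stress tensor with the Virasoro field. By the paper's definitions (Subsections~\ref{ss: stress tensor}--\ref{ss: Virasoro field}), a stress tensor $(A,\bar A)$ must have $A$ a \emph{holomorphic quadratic differential}, while the Virasoro field $T$ is a separate object characterized by $T\in\FF(A,\bar A)$ and $T-A$ being a non-random holomorphic Schwarzian form. Your candidate $A=T_{(b)}=-\frac12 J*J+ib\,\pa J$ is a Schwarzian form of order $\frac{1}{12}c$ with $c=1-12b^2$, hence it is a quadratic differential only when $b^2=1/12$; for generic $b$ the pair $(T_{(b)},\overline{T_{(b)}})$ simply cannot be a stress tensor, and the chart-covariant framework (Ward's functionals $\frac{1}{2\pi i}\oint vA$, the residue form of Ward's identity) breaks if $A$ is not a quadratic differential. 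The same confusion appears at the $b=0$ level: the stress tensor of $\Phi_{(0)}$ is the Wick square $-\frac12 J_{(0)}\odot J_{(0)}$, not the OPE square $-\frac12 J_{(0)}*J_{(0)}$, which is already a Schwarzian form of order $\frac1{12}$. Your singular-part computations in the identity chart of $\mathbb{H}$ are not wrong --- since $T_{(b)}-A_{(b)}$ is non-random, the singular OPEs of $T_{(b)}$ and $A_{(b)}$ against any field coincide in a fixed chart --- but a single-chart check only upgrades to the global statement because $A$ is a quadratic differential and $\Phi_{(b)}$ is a (real part of a) pre-pre-Schwarzian form, and you never exhibit such an $A$.

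The missing steps, which constitute the actual content of the paper's proof, are: (i) define $A_{(b)}=-\frac12 J_{(0)}\odot J_{(0)}+(ib\,\pa-j)J_{(0)}$ and verify it is a quadratic differential by checking the transformation laws of $ib\,\pa J_{(0)}$ and $jJ_{(0)}$ under chart changes; (ii) verify Ward's OPE $A(\zeta)\Phi(z)\sim ib(\zeta-z)^{-2}+J_{(b)}(z)(\zeta-z)^{-1}$ in the $(\mathbb{H},-1,1)$-uniformization, which reduces to the $b=0$ case plus the two elementary OPEs $\pa J_{(0)}(\zeta)\Phi_{(0)}(z)\sim(\zeta-z)^{-2}$ and $j(\zeta)J_{(0)}(\zeta)\Phi_{(0)}(z)\sim-j(z)(\zeta-z)^{-1}$ (note the paper checks the generator $\Phi_{(b)}$ itself and then invokes closure of $\FF(A,\bar A)$ under $\pa,\bp,*$, rather than checking $J$ and $\VV^\alpha$ separately as you propose); and (iii) show $T_{(b)}$ is the Virasoro field by computing $T_{(b)}-A_{(b)}=\frac{1-12b^2}{12}S_w-2b^2\bigl(\frac{w'}{1-w^2}\bigr)^2$ explicitly and observing it is a non-random holomorphic Schwarzian form. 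Your write-up omits (i) and (iii) entirely because the distinction between $A_{(b)}$ and $T_{(b)}$ has been collapsed.
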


\begin{proof}
Let us define a holomorphic field $A$ by
\begin{equation} \label{eq: Ab}
A\equiv A_{(b)} =A_{(0)} +(ib\pa -j)J_{(0)}, \qquad A_{(0)} = -\frac12J_{(0)}\odot J_{(0)} .
\end{equation}
Then $A$ is a quadratic differential.
Indeed, as in the chordal and the radial cases, $ib\pa J_{(0)}$ and $jJ_{(0)}$satisfy the following transformation laws:
\begin{align*}
ib\pa J_{(0)} &= ibh'' \tilde J_{(0)}\circ h + ib(h')^2 \pa \tilde J_{(0)}\circ h, \\
jJ_{(0)} &= ib \Big(\frac{h''}{h'}\Big)h' \tilde J_{(0)}\circ h +(h')^2 (\tilde j\tilde J_{(0)}) \circ h.
\end{align*}
Since $\Phi_{(b)}$ is a real part of pre-pre-Schwarzian form of order $ib$, it is enough to check Ward's OPE in the $(\mathbb{H},-1,1)$-uniformization,
$$A(\zeta)\Phi(z) \sim  \frac{ib}{(\zeta -z)^2} +\frac{J_{(b)}}{\zeta -z}.$$
(We use the notation $\sim$ for the singular part of the operator product expansion.)
However, this is immediate from Ward's OPE in the case $b=0$ and the following operator product expansions:
$$\pa J_{(0)}(\zeta)\Phi_{(0)}(z)\sim \frac{1}{(\zeta-z)^2}, \qquad j(\zeta)J_{(0)}(\zeta)\Phi_{(0)}(z)\sim-\frac{j(z)}{\zeta-z}.$$

Finally, let us show that $T_{(b)}$ is the Virasoro field for the OPE family $\FF_{(b)}.$
Since $\FF_{(b)}$ is closed under differentiation and OPE multiplication (see Subsection~\ref{ss: stress tensor} or \cite[Proposition~5.8]{KM11}), $T_{(b)}$ is in $\FF_{(b)}.$ Therefore, it has a stress tensor $(A_{(b)},\overline{A_{(b)}}).$
It follows from the expressions  of $T$ and $J$ that
$$T = A + \frac{1}{12}S_w - \frac{j^2}2 + ibj',$$
where $S_w =(w''/w)'-\frac12(w''/w')^2$ is the Schwarzian derivative of $w.$
The term $-\frac12{j^2} + ibj'$ simplifies $ -b^2 S_w -2b^2 w'^2/(1-w^2)^2.$
Thus we get
\begin{align}
T=A+\frac{1-12b^2}{12}S_w -2b^2 (\frac{w'}{1-w^2})^2
\end{align}
and therefore $T$ is a Schwarzian form of order $\frac1{12}c.$
The central charge $c$ is given by
$$c\equiv 1-12b^2=\frac{(3\kappa-8)(6-\kappa)}{2\kappa}.$$
\end{proof}

%%%%%%%%%%%%%%%%%%%%%%%%%%%%%%%%%%%%%%%%%%%%%%%%%%%%%%%%%%%%%%%%%%%
\subsection{Ward's functionals and Ward's identities} \label{ss: Ward}
In this subsection we modify the definition of Ward's functionals in the chordal case (see \cite[Sections~5.5~--~5.6]{KM11}) into the dipolar case and derive Ward's identities.
For a given open set $U$ such that $\bar{U} \subset \bar{D} \sm \{q_\pm\}$ and a smooth vector field $v$ on $\bar{U}$, Ward's functional $W^\pm(v;U)$ is defined by
\begin{align*}
W^+(v;U)= \frac{1}{2\pi i}\int_{\pa U}vA - \frac{1}{\pi}\int\!\!\int_U (\bp v)A, \qquad W^-(v;U)=\overline{W^+(v;U)}
\end{align*}
where $A\equiv A_{(b)} =A_{(0)} +(ib\pa -j)J_{(0)},$ $A_{(0)} =-\frac12 J_{(0)}\odot J_{(0)},$ see \eqref{eq: Ab}, and $j=\E[J_{(b)}].$
We also write $W(v;U)=2\,\Re\, W^+(v;U).$
Then $\E\,[W^+(v;U)\,\XX]$ is a well-defined correlation function if $\XX$ is a Fock space functional on $\bar{D} \sm \{q_\pm\}$ with nodes in $U$ and in the maximal open set $D_{\hol}(v)$ where $v$ is holomorphic.

 Recall that the following statements are equivalent (see \cite[Propositions~5.3 and 5.10]{KM11}):
\begin{itemize}
\item Ward's OPE holds for a Fock space field $X.$
\item The residue form of Ward's identity for $X$
$$
\LL_vX(z)= \frac1{2\pi i}\oint_{(z)} vA^+\,X(z)  -\frac1{2\pi i}\oint_{(z)} \bar vA^-\,X(z)
$$
holds on $D_{\hol}(v) \cap U$ for all (local) smooth vector field $v.$
(See Subsection~\ref{ss: Lie} for the definition of Lie derivatives $\LL_v$ and their basic properties.)
\item For all $z \in D_{\hol}(v) \cap U$
\begin{align*}
\E[\YY\LL(v,U)X(z)]=\E [W(v,U)X(z)\YY]
\end{align*} holds for all correlation functional $\YY$ whose nodes are in $(D\sm \bar{U}).$
\end{itemize}

The definition of Ward's functionals can be extended to meromorphic vector fields.
For a meromorphic vector field $v$ which is continuous up to the boundary and has a simple zero at $q_\pm,$
we define Ward's functional $W^+(v;\bar{D}\sm\{q_\pm\})$ by
\begin{align*}
W^+(v;\bar{D}\sm\{q_\pm\})= \lim_{\ve \to 0} W^+(v;D_{\ve}),
\end{align*} where $D_{\ve}=D\sm (B(q_-,\ve)\cup B(q_+,\ve)\cup \bigcup_j B(p_j ,\ve) )$ and $p_j$'s are poles of $v.$
Thus we have
\begin{align*}
W^+(v;\bar{D}\sm\{q_\pm\})=\frac{1}{2 \pi i} \int_{\pa D} vA -\frac{1}{\pi}\int\!\!\int_{D} (\bp v)A,
\end{align*} where $\bp v$ is considered as a distribution.
Note that $vA$ has a removable singularity at $q_\pm.$
Suppose $X_j$'s are in the OPE family $\FF_{(b)}.$
Then the global Ward's identity
\begin{align} \label{eq: global Ward}
\E \,[\LL_vX_1 (z_1 ) \cdots X_n (z_n)]=\E\,[W(v;\bar{D}\sm\{q_\pm\}) X_1(z_1)\cdots X_n (z_n)]
\end{align}
holds if all $z_j \in D_{\hol}(v).$
Compare \eqref{eq: global Ward} to \cite[Proposition~5.9]{KM11}.

We now represent a quadratic differential $A\equiv A_{(b)} =A_{(0)} +(ib\pa -j)J_{(0)}$ in terms of Ward's functionals associated with the dipolar Loewner vector field $v_{\zeta}:$
\begin{align*}
(v_{\zeta}\,\|\,\id_{\overline{\mathbb{H}}\sm\{\pm 1\}})(z)=\frac{1-z^2}{2}\frac{1-\zeta z}{\zeta -z}.
\end{align*}

\begin{prop}\label{A}
In the identity chart $\id_\mathbb{H}$ of $\mathbb{H},$ we have
$$A(\zeta)=\frac{2}{(1-\zeta^2)^2} \Big(W^+ (v_{\zeta};\overline{\mathbb{H}}\sm\{\pm 1\}) +W^- (v_{\bar{\zeta}};\overline{\mathbb{H}}\sm\{\pm 1\})\Big).$$
\end{prop}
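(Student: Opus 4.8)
The plan is to unfold the definition of the meromorphic Ward functional and to read off $A(\zeta)$ as the residue contribution at the single interior pole of $v_\zeta$, while the leftover boundary integrals cancel by a reflection symmetry. Recall from the definition that
$$W^+(v_\zeta;\overline{\mathbb{H}}\sm\{\pm1\}) = \frac{1}{2\pi i}\int_{\R} v_\zeta A - \frac{1}{\pi}\int\!\!\int_{\mathbb{H}}(\bp v_\zeta)A,$$
with $A\equiv A_{(b)}$ the holomorphic quadratic differential of the family. Since $v_\zeta$ has simple zeros at $\pm1=q_\pm$ (from the factor $\tfrac12(1-z^2)$), the product $v_\zeta A$ has removable singularities there and the boundary integral is well-defined.

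First I would compute the bulk term. The vector field $v_\zeta$ is meromorphic with a single simple pole in $\mathbb{H}$, at $z=\zeta$, and a direct computation gives $\mathrm{Res}_{z=\zeta}\,v_\zeta = -\tfrac12(1-\zeta^2)^2$. Treating $\bp v_\zeta$ as a distribution and using $\bp\frac{1}{z-\zeta}=\pi\delta_\zeta$, only the polar part contributes, so $\bp v_\zeta = -\tfrac{\pi}{2}(1-\zeta^2)^2\,\delta_\zeta$ on $\mathbb{H}$ and hence
$$-\frac1\pi\int\!\!\int_{\mathbb{H}}(\bp v_\zeta)A = \frac{(1-\zeta^2)^2}{2}\,A(\zeta).$$
This already produces the asserted $A(\zeta)$ after multiplication by $\frac{2}{(1-\zeta^2)^2}$; the remainder of the argument is to show that the boundary terms contribute nothing.

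Next I would observe that $v_{\bar\zeta}$ has its pole at $\bar\zeta$, which lies in the lower half-plane, hence outside $\mathbb{H}$; therefore $\bp v_{\bar\zeta}=0$ on $\mathbb{H}$ and $W^-(v_{\bar\zeta};\overline{\mathbb{H}}\sm\{\pm1\})=\overline{W^+(v_{\bar\zeta};\cdot)}$ reduces to the conjugated boundary integral $\overline{\frac{1}{2\pi i}\int_\R v_{\bar\zeta}A}$. The key step is the cancellation of the two boundary integrals. For real $x$ one has $\overline{v_{\bar\zeta}(x)}=v_\zeta(x)$, immediate from the explicit formula since $\bar x=x$, while the Dirichlet boundary condition forces the Schwarz-reflection symmetry $\overline{A(x)}=A(x)$ of the stress-tensor component on $\R$. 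Conjugating and using $\overline{2\pi i}=-2\pi i$ then gives
$$\overline{\frac{1}{2\pi i}\int_\R v_{\bar\zeta}A} = -\frac{1}{2\pi i}\int_\R v_\zeta A,$$
which is exactly the negative of the boundary term of $W^+(v_\zeta)$. Adding the two Ward functionals, the boundary contributions cancel and only the bulk term survives, yielding the identity.

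The main obstacle I anticipate is the careful justification of the boundary reality $\overline{A(x)}=A(x)$ on $\R$ as an identity of Fock space fields inside correlators, that is, the Schwarz reflection of the stress tensor dictated by the Dirichlet boundary condition; this is the dipolar analogue of the boundary behavior used in the chordal and radial treatments. A secondary point requiring care is the distributional computation of $\bp v_\zeta$ together with the verification that $v_\zeta A$ is genuinely regular at $q_\pm=\pm1$, so that no spurious residues at $\pm1$ enter the boundary integral.
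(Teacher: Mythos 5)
Your proposal is correct and follows essentially the same route as the paper's proof: unfold the definition of the Ward functional, extract $\frac{(1-\zeta^2)^2}{2}A(\zeta)$ from the bulk term via $\bp v_\zeta=-\frac{\pi}{2}(1-\zeta^2)^2\delta_\zeta$, note that $v_{\bar\zeta}$ is holomorphic in $\mathbb{H}$ so its bulk term vanishes, and cancel the two boundary integrals using $\overline{v_{\bar\zeta}}=v_\zeta$ and the reality of $A$ on $\R$. The points you flag as needing care (reality of $A$ on the boundary, removability of the singularity of $v_\zeta A$ at $\pm1$) are exactly the facts the paper invokes without further elaboration.
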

\begin{proof}
For $\zeta\in\mathbb{H},$ by the definition of Ward's functional,
$$W^+ (v_{\zeta};\overline{\mathbb{H}}\sm\{\pm 1\}) = \frac{1}{2\pi i} \int_{-\infty}^\infty v_{\zeta} A - \frac{1}{\pi}\int_{\mathbb{H}} (\bp v_{\zeta} )A.$$
On the other hand, the reflected vector field
$$v_\zeta^\#(z) := \overline{v_{\zeta}(\bar z)} = v_{\bar\zeta}(z)$$
is holomorphic in $\mathbb{H}.$
Therefore, we have
$$W^+ (v_{\bar{\zeta}};\overline{\mathbb{H}}\sm\{\pm 1\}) = \frac{1}{2\pi i} \int_{-\infty}^\infty v_{\bar{\zeta}} A.$$
Since $A$ is real on the boundary and $\overline{v_{\bar{\zeta}}}=v_\zeta$ on the boundary,
$$W^- (v_{\bar{\zeta}};\overline{\mathbb{H}}\sm\{\pm 1\}) = -\frac{1}{2\pi i} \int_{-\infty}^\infty v_\zeta A.$$
Proposition now follows from the fact that $\bp v_{\zeta}=-\frac12\pi(1-\zeta^2)^2\delta_\zeta.$
\end{proof}

%%%%%%%%%%%%%%%%%%%%%%%%%%%%%%%%%%%%%%%%%%%%%%%%%%%%%%%%%%%%%%%%%%%
\subsection{Ward's equations in the upper half-plane}
We will use Ward's equations below (Proposition~\ref{Ward}) to prove Theorem~\ref{main X}.
For $Y\in\FF_{(b)},$ let us express its (holomorphic part of) Lie derivative $\LL_{v_\zeta}^+Y$ in terms of the singular part of operator product expansion
$A(\zeta)Y(z)\sim \sum_{j\le-1}{C_j(z)}{(\zeta-z)^j}$ as $\zeta\to z$ and
the OPE coefficients $ C_j\,(j=-1,-2,-3):$
\begin{align} \label{eq: sing OPE}
\LL_{v_\zeta}^+Y(z) = \sum_{j=-3}^{-1}P_j(\zeta,z)\, C_j(z) +\frac{(1-\zeta^2)^2}{2} \sum_{j\le-1}{C_j(z)}{(\zeta-z)^j}
\end{align}
where $P_{-1}(\zeta,z)=\frac12(2\zeta+z-\zeta^3-\zeta^2z-\zeta z^2),$ $P_{-2}(\zeta,z)=\frac12(1-\zeta^2-2\zeta z),$ $P_{-3}(\zeta,z)=-\frac12\zeta.$
Equation \eqref{eq: sing OPE} can be shown by the identities
$$\LL_{v_\zeta}^+Y(z) = \frac1{2\pi i}\oint_{(z)}v_\zeta(\eta)A(\eta)Y(z)\,\dd \eta = \sum_{j\le-1}{C_j(z)} \frac1{2\pi i}\oint_{(z)}(\eta-z)^j\,v_\zeta(\eta)\,\dd \eta,$$
and
$$\frac1{2\pi i}\oint_{(z)}(\eta-z)^j\,v_\zeta(\eta)\,\dd \eta = \frac{(1-\zeta^2)^2}2(\zeta-z)^j + P_j(\zeta,z)\qquad (j\le-1)
$$
if we set $P_j(\zeta,z)\equiv0$ for $j\le -4.$

 We state Ward's equation in terms of Virasoro generators $L_n.$
Let us recall the definition of $L_n:$
\begin{equation} \label{eq: Ln}
L_n (z):=\frac{1}{2 \pi i} \oint_{(z)} (\zeta -z )^{n+1}T(\zeta) \,\dd\zeta.
\end{equation}
As operators acting on fields, the modes $L_n$ can be viewed as OPE multiplications, $L_nX = T*_{-n-2}X.$
See Subsection~\ref{ss: OPE} for the definition of $*_n.$

\begin{prop}\label{Ward}
For $Y\in\FF_{(b)}$ and  $X = X_1(z_1)\cdots X_n(z_n)\, (X_j\in\FF_{(b)}),$
\begin{align*}
\E \,[Y(z) \LL_{v_z}^+ X] +\E\,[\LL_{v_{\bar{z}}}^- (Y(z)X)]&=\frac{(1-z^2)^2}{2}\E\,[(L_{-2}Y)(z)X] -\frac{3z(1-z^2)}{2}\E\,[(L_{-1}Y)(z)X] \\
&+ \frac{3z^2 -1}{2} \E\,[(L_0 Y)(z)X] +\frac{z}{2}\E\,[(L_1Y)(z)X] +b^2 \E\,[Y(z)X],
\end{align*}
where all fields are evaluated in the identity chart of $\mathbb{H}.$
\end{prop}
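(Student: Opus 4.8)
The plan is to derive Ward's equation from the Ward-functional representation of $A$ in Proposition~\ref{A}, convert the functionals into Lie derivatives via the global Ward identity, and then pass to the limit $\zeta\to z.$ First I would multiply the identity of Proposition~\ref{A} by $(1-\zeta^2)^2/2$ and insert it into a correlation against $Y(z)X,$ obtaining
$$\frac{(1-\zeta^2)^2}{2}\,\E[A(\zeta)Y(z)X]=\E[W^+(v_\zeta)Y(z)X]+\E[W^-(v_{\bar\zeta})Y(z)X].$$
Applying the holomorphic and antiholomorphic halves of the global Ward identity~\eqref{eq: global Ward} (the two residue terms in the bulleted identity for $\LL_vX$) converts the right-hand side into $\E[\LL_{v_\zeta}^+(Y(z)X)]+\E[\LL_{v_{\bar\zeta}}^-(Y(z)X)],$ where $v_\zeta$ is meromorphic with a simple pole at $\zeta$ and simple zeros at $\pm1$ while $v_{\bar\zeta}$ is holomorphic on $\mathbb{H}$; this step needs $\zeta$ distinct from $z$ and from the nodes of $X.$ Splitting the first term by the Leibniz rule, $\LL_{v_\zeta}^+(Y(z)X)=(\LL_{v_\zeta}^+Y)(z)X+Y(z)\LL_{v_\zeta}^+X,$ and rearranging gives
$$\E[Y(z)\LL_{v_\zeta}^+X]+\E[\LL_{v_{\bar\zeta}}^-(Y(z)X)]=\E\Big[\Big(\tfrac{(1-\zeta^2)^2}{2}A(\zeta)Y(z)-\LL_{v_\zeta}^+Y(z)\Big)X\Big],$$
which isolates the left-hand side of the proposition up to the limit $\zeta\to z.$

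Next I would identify the OPE coefficients. In $\id_\mathbb{H}$ one has $w=\id,$ so $S_w=0$ and $j(\zeta)=2ib\zeta/(1-\zeta^2);$ the relation $T=A-\tfrac12 j^2+ib\,j'$ from the proof of the proposition in Subsection~\ref{ss: c modify} then collapses to
$$A(\zeta)=T(\zeta)+\frac{2b^2}{(1-\zeta^2)^2}.$$
Since $2b^2/(1-\zeta^2)^2$ is holomorphic near $\zeta=z$ for $z\neq\pm1,$ the singular coefficients of $A(\zeta)Y(z)\sim\sum_{j\le-1}C_j(z)(\zeta-z)^j$ coincide with those of $T(\zeta)Y(z),$ namely $C_{-1}=L_{-1}Y,$ $C_{-2}=L_0Y,$ $C_{-3}=L_1Y$ (and $C_{-k}=L_{k-2}Y$ for $k\ge4$), while the regular coefficient is shifted, $C_0=L_{-2}Y+\tfrac{2b^2}{(1-z^2)^2}Y.$ This shift is exactly the source of the additive $b^2\,\E[Y(z)X]$ term.

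Finally I would take $\zeta\to z.$ The integrand on the right is $\tfrac{(1-\zeta^2)^2}{2}A(\zeta)Y(z)-\LL_{v_\zeta}^+Y(z),$ and by the residue representation~\eqref{eq: sing OPE} the singular part $\tfrac{(1-\zeta^2)^2}{2}\sum_{j\le-1}C_j(z)(\zeta-z)^j$ cancels term by term against the corresponding part of $\LL_{v_\zeta}^+Y(z)$ — in particular all higher modes $L_nY$ with $n\ge2$ drop out, which is why only $L_{-2},L_{-1},L_0,L_1$ survive. What remains as $\zeta\to z$ is $\tfrac{(1-z^2)^2}{2}C_0-\sum_{j=-3}^{-1}P_j(z,z)C_j;$ evaluating $P_{-1}(z,z)=\tfrac{3z(1-z^2)}{2},$ $P_{-2}(z,z)=-\tfrac{3z^2-1}{2},$ $P_{-3}(z,z)=-\tfrac z2$ and substituting the $C_j$ above reproduces precisely the right-hand side of the proposition. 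On the left the limit is harmless, since $v_z$ has its pole at $z$ (away from the nodes of $X$) and $v_{\bar z}$ is holomorphic on $\mathbb{H},$ so $\E[Y(z)\LL_{v_\zeta}^+X]+\E[\LL_{v_{\bar\zeta}}^-(Y(z)X)]$ is continuous at $\zeta=z.$

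The main obstacle I anticipate is the bookkeeping in the limit $\zeta\to z:$ one must match the singular Laurent tails of $\tfrac{(1-\zeta^2)^2}{2}A(\zeta)Y(z)$ and $\LL^+_{v_\zeta}Y(z)$ exactly (the content of~\eqref{eq: sing OPE}) and correctly extract the finite remainder, keeping careful track of the holomorphic shift between $A$ and $T$ that produces the $b^2$ term. A secondary point needing care is the justification that the chiral halves of Ward's identity may be applied separately to $W^+(v_\zeta)$ and $W^-(v_{\bar\zeta}),$ together with the verification that $v_\zeta$ satisfies the hypotheses (continuous up to the boundary, with a simple zero at each of $q_\pm=\pm1$) required for $W^+(v_\zeta;\overline{\mathbb{H}}\sm\{\pm1\})$ to be well-defined.
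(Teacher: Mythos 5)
Your proof is correct and follows essentially the same route as the paper's: Proposition~\ref{A} together with Ward's identity converts the $A(\zeta)$-insertion into Lie derivatives, Leibniz's rule splits off $(\LL^+_{v_\zeta}Y)(z)$, the expansion~\eqref{eq: sing OPE} controls the limit $\zeta\to z$, and the shift $T=A-2b^2/(1-z^2)^2$ in $\id_{\mathbb{H}}$ produces the $b^2$ term. You merely carry out these steps in a slightly different order and in more detail than the paper's terse write-up, and your coefficient evaluations $P_{-1}(z,z)=\tfrac{3z(1-z^2)}{2}$, $P_{-2}(z,z)=-\tfrac{3z^2-1}{2}$, $P_{-3}(z,z)=-\tfrac{z}{2}$ all check out against the stated right-hand side.
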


\begin{proof}
Subtracting the singular part of OPE and using \eqref{eq: sing OPE}, we have
\begin{align*}
\E[(A*Y)(z)X]&=\lim_{\zeta \to z}\E[A(\zeta)Y(z)X]-\frac{2}{(1-\zeta^2)^2} \E\,[(\LL^+_{v_{\zeta}} Y(z))X]
\\&+ \frac{2}{(1-z^2)^2}\Big(\frac{3z(1-z^2)}{2} \E\,[(A*_{-1}Y)(z)X]\\ &+\frac{1-3z^2}{2}\E\, [(A*_{-2}Y)(z)X] -\frac{z}{2} \E\,[(A*_{-3}Y)(z)X]\Big).
\end{align*}
We now use Proposition~\ref{A} and Leibniz's rule for Lie derivatives to derive
\begin{align*}
\E\,[Y(z) \LL_{v_z}^+ X] &+\E\,[\LL_{v_{\bar{z}}}^- (Y(z)X)] \\ &=\frac{(1-z^2)^2}{2}\E\,[(A*Y)(z)X]-\frac{3z(1-z^2)}{2}\E\,[(A*_{-1}Y)(z)X] \\
&+\frac{3z^2-1}{2} \E\,[(A*_{-2} Y)(z)X] +\frac{z}{2}\E\,[(A*_{-3}Y)(z)X].
\end{align*}
Proposition now follows since $T=A-2b^2/{(1-z^2)^2}$ in the identity chart of $\mathbb{H}.$
\end{proof}

%%%%%%%%%%%%%%%%%%%%%%%%%%%%%%%%%%%%%%%%%%%%%%%%%%%%%%%%%%%%%%%%%%%
\bs\section{Vertex fields}
%%%%%%%%%%%%%%%%%%%%%%%%%%%%%%%%%%%%%%%%%%%%%%%%%%%%%%%%%%%%%%%%%%%
In the next section the boundary condition changing operator $\Psi$ will be introduced as a vertex field rooted at two marked boundary points $q_\pm.$
Also we expand our collection of OPE family of $\Phi_{(b)}$ by considering the (rooted) multi-vertex fields with the neutrality condition.
For this purpose, we introduce the formal bosonic fields first and then define the (formal) multi-vertex fields in terms of the formal bosonic fields.
We also use them to describe the relation between the conformal field theory with the Dirichlet boundary condition and one with the Neumann boundary condition.

%%%%%%%%%%%%%%%%%%%%%%%%%%%%%%%%%%%%%%%%%%%%%%%%%%%%%%%%%%%%%%%%%%%
\subsection{Formal fields} \label{ss: formal}
The formal 1-point fields $\Phiplus_{(0)}$ and $\Phiminus_{(0)}$ can be interpreted as the ``holomorphic part" and the ``anti-holomorphic part" of the Gaussian free field $\Phi_{(0)}$ in the sense that $\Phi_{(0)} = \Phiplus_{(0)}+\Phiminus_{(0)}$ and $\Phiminus_{(0)}=\overline{\Phiplus_{(0)}}.$
By definition they have the following formal correlations
$$\E [\Phiplus_{(0)} (z) \Phiplus_{(0)}(z_0)]=\log \frac{1}{w(z)-w(z_0)},\quad\E[\Phiplus_{(0)}(z) \Phiminus_{(0)}(z_0)]=\log(w(z)-\overline{w(z_0)}),$$
where $w$ is any conformal transformation from $D$ onto $\mathbb{H}.$
Of course, neither $\Phiplus_{(0)}$ nor $\Phiminus_{(0)}$ is a genuine Fock space field.
However, the formal field
$$\sum_{j=1}^n \sigma_j \Phiplus_{(0)}(z_j) -\sigma_{j*}\Phiminus_{(0)}(z_j)$$
is a well-defined (multivalued) Fock space field if and only if the ``neutrality condition"
$$\sum_j (\sigma_j +\sigma_{j*})=0$$
holds.
For example, as a bi-variant field, $\Phiplus_{(0)}(z,z_0) =\Phiplus_{(0)}(z) -\Phiplus_{(0)}(z_0)$ is a multivalued Fock space field:
$$\Phiplus_{(0)}(z,z_0)=\{\Phiplus_{(0)}(\gamma):=\int_\gamma J_{(0)}(\zeta)\,\dd \zeta\,|\, \gamma \textrm{ is a curve from }z_0\textrm{ to } z\}.$$

We now explain why a version of dipolar conformal field theory with the Neumann boundary condition can be developed as the dual of theory with the Dirichlet boundary condition.
First, let us recall the definition of the harmonic conjugate $\wt\Phi_{(b)}$ of bosonic field $\Phi_{(b)}:$
$$\wt \Phi_{(b)}(z,z_0):=2\, \Im\, \Phiplus_{(b)}(z,z_0).$$
We write $\wt \Phi_{(0)}(z):=2\, \Im\, \Phiplus_{(0)}(z)$ so that $\wt \Phi_{(0)}(z,z_0)=\wt \Phi_{(0)}(z)-\wt \Phi_{(0)}(z_0).$
As a formal field of one variable, $\wt\Phi_{(0)}$ has the 2-point correlation function
$$\E\,[\wt\Phi_{(0)}(\zeta)\wt\Phi_{(0)}(z)]=-\E\,[(\Phiplus_{(0)}(\zeta)-\Phiminus_{(0)}(\zeta))(\Phiplus_{(0)}(z)-\Phiminus_{(0)}(z))] =G_N(\zeta,z),$$
where $G_N(\zeta,z)= 2 \log|w(\zeta)-w(z)||w(\zeta)-\overline{w(z)}|$ is the (formal) Green's function of $D$ with the Neumann boundary condition.
Therefore,
$$\E\,[\wt\Phi_{(0)}(\zeta,\zeta_0)\wt\Phi_{(0)}(z,z_0)] = G_N(\zeta,z)-G_N(\zeta_0,z)-G_N(\zeta,z_0)+G_N(\zeta_0,z_0).$$
It is well known that the difference of two Neumann Green's function is well-defined, see \cite{Kenyon01}.
Thus, as a bi-variant Fock space field, a Gaussian free field $\Phi_{(0)}^N(z,z_0)$ with the Neumann boundary condition can be defined in terms of the dual boson $\wt\Phi_{(0)}(z,z_0)$ of the Gaussian free field $\Phi_{(0)}$ with the Dirichlet boundary condition:
$$\Phi_{(0)}^N(z,z_0):= \wt\Phi_{(0)}(z,z_0).$$
For a real parameter $b,$ we define the central charge modification $\Phi_{(b)}^N(z,z_0)$ by
$$\Phi_{(b)}^N(z,z_0) :=\Phi_{(0)}^N(z,z_0) +2b \log \Big|\frac{w'(z)}{1-w(z)^2}\Big|-2b\log \Big|\frac{w'(z_0)}{1-w(z_0)^2}\Big|.$$
Then $\Phi_{(b)}^N(z,z_0) = \wt\Phi_{(b)}(z,z_0).$
Now we define the current $J_{(b)}^N$ and the Virasoro field $T_{(b)}^N$ by
$$J_{(b)}^N(z) = i\pa_z \Phi_{(b)}^N(z,z_0), \qquad T_{(b)}^N=-\frac{1}{2}J_{(b)}^N*J_{(b)}^N +ib\pa J_{(b)}^N$$
so that $J_{(b)}^N = J_{(b)}$ and $T_{(b)}^N = T_{(b)}.$
Also we have $\wt\Phi_{(b)}^N(z,z_0)=-\Phi_{(b)}(z,z_0).$

%%%%%%%%%%%%%%%%%%%%%%%%%%%%%%%%%%%%%%%%%%%%%%%%%%%%%%%%%%%%%%%%%%%
\subsection{Multi-vertex fields} \label{ss: O}
It is convenient to describe multi-vertex fields in terms of formal fields.
We formally define
$$\OO^{(\sigma)}=M^{(\sigma)}\,e^{\odot i\sigma \Phiplus_{(0)}},\qquad M^{(\sigma)}=\E\,[\OO^{(\sigma)}]=(w')^h (1-w^2)^\mu,$$
where $w$ is any conformal transformation from $(D,q_-,q_+)$ onto $(\mathbb{H},-1,1)$, and $h=\frac12\sigma^2 -\sigma b, \mu = \sigma b.$
As a multivalued Fock space field, a chiral bi-vertex field is defined by
\begin{align}
\OO^{(\sigma)} (z,z_0) \equiv \OO^{(\sigma)}(z) \OO^{(-\sigma)} (z_0) =M^{(\sigma)}(z)M^{(-\sigma)}(z_0)I^{(\sigma)} (z,z_0) e^{\odot i\sigma \Phiplus_{(0)}(z,z_0)},
\end{align}
where the interaction term $I$ is given by
$$I^{(\sigma)}(z,z_0)=e^{\sigma^2 \E[\Phiplus_{(0)}(z)\Phiplus_{(0)}(z_0)]}=(w-w_0)^{-\sigma^2}.$$
Next we define a general 1-point vertex field $\OO^{(\sigma,\sigma_*)}$ by
$$\OO^{(\sigma,\sigma_*)}(z)=\OO^{(\sigma)}(z) \overline{\OO^{(\overline{\sigma_*})}}(z)=M^{(\sigma)} (z)\overline{M^{(\overline{\sigma_*})}}(z) I^{(\sigma,\sigma_*)}(z)e^{\odot i  \sigma\Phiplus_{(0)} (z)-i\sigma_*\Phiminus_{(0)}(z)},$$
where the interaction term $I$ is given by
$$I^{(\sigma,\sigma_*)} (z)=e^{\sigma \sigma_* \E[\Phiplus_{(0)}(z)\Phiminus_{(0)}(z)]}=(w-\bar{w})^{\sigma \sigma_*}.$$
Thus $M^{(\sigma,\sigma_*)}=\E\,[\OO^{(\sigma,\sigma_*)}] = (w')^{h}(\overline{w'})^{h_*}(1-w^2)^{\mu}(1-\bar{w}^2)^{\mu_*}(w-\bar{w})^{\sigma \sigma_*}$ with the dimensions and exponents
$$h=\frac{\sigma^2}{2}-\sigma b ,\quad h_*=\frac{\sigma_*^2}{2}-\sigma_* b,\quad \mu =\sigma b,\quad\mu_*=\sigma_*b.$$
Finally we define the multi-vertex field
\begin{equation}
\OO^{(\bfs{\sigma},\bfs{\sigma_*})}(\bfs{z})=\prod{M^{(\sigma_j,\sigma_{j*})}}(z_j)\prod_{j<k} I_{j,k} (z_j,z_k) e^{\odot i(\sum \sigma_j \Phiplus_{(0)}(z_j) -\sigma_{j*}\Phiminus_{(0)}(z_j))},
\end{equation}
where $\bfs\sigma=(\sigma_1,\cdots,\sigma_n),$ $\bfs{\sigma_*}=(\sigma_{1*},\cdots,\sigma_{n*}),$ and $\bfs{z} = (z_1,\cdots,z_n).$
The interaction terms $I_{j,k} (z_j,z_k)=e^{-\E(\sigma_j \Phiplus_{(0)}(z_j) -\sigma_{j*}\Phiminus_{(0)}(z_j))(\sigma_k \Phiplus_{(0)}(z_k) -\sigma_{k*}\Phiminus_{(0)}(z_k))}$ are given by:
\begin{equation}\label{eq: interaction}
I_{j,k} (z_j,z_k)=(w_j -w_k)^{\sigma_{j} \sigma_{k}}(w_j -\bar{w}_k)^{\sigma_{j} \sigma_{k*}}(\bar{w}_j -w_k)^{\sigma_{j*} \sigma_{k}}(\bar{w}_j -\bar{w}_k)^{\sigma_{j*} \sigma_{k*}}.
\end{equation}
One can view a multi-vertex field as a product of general 1-point vertex fields:
$$\OO^{(\bfs{\sigma},\bfs{\sigma_*})}(\bfs{z})=\OO^{(\sigma_1,\sigma_{1*})}(z_1)\OO^{(\sigma_2,\sigma_{2*})}(z_2)\cdots \OO^{(\sigma_n,\sigma_{n*})}(z_n).$$

\begin{prop}
Under the neutrality condition, $\OO\equiv \OO^{(\bfs{\sigma},\bfs{\sigma_*})}$ are well-defined $\Aut(D,q_\pm)$-invariant Fock space fields.
Moreover, they satisfy Ward's OPE:
$$T(\zeta)\OO(\bfs{z})\sim h_{j\phantom{*}}\frac{\OO(\bfs{z})}{(\zeta-z_j)^2} + \frac{\pa_{z_j}\OO(\bfs{z})}{\zeta-z_j},\qquad
T(\zeta)\bar\OO(\bfs{z})\sim \bar h_{j*}\frac{\bar\OO(\bfs{z})}{(\zeta-z_j)^2} + \frac{\pa_{z_j}\bar\OO(\bfs{z})}{\zeta - z_j},$$
as $\zeta \to z_j.$
\end{prop}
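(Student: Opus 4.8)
The plan is to verify the three claims in order, working throughout in the $(\mathbb{H},-1,1)$-uniformization, which is legitimate because well-definedness, $\Aut(D,q_\pm)$-invariance, and Ward's OPE are all conformally natural. For well-definedness, note that the only genuinely random factor of $\OO^{(\bfs\sigma,\bfs\sigma_*)}(\bfs z)$ is the Wick exponential of $i\sum_j(\sigma_j\Phiplus_{(0)}(z_j)-\sigma_{j*}\Phiminus_{(0)}(z_j))$, the remaining factors $\prod_jM^{(\sigma_j,\sigma_{j*})}(z_j)$ and $\prod_{j<k}I_{j,k}(z_j,z_k)$ being non-random. By the formal-field discussion in Subsection~\ref{ss: formal}, this exponent is a bona fide (multivalued) Fock space field exactly when $\sum_j(\sigma_j+\sigma_{j*})=0$, since neutrality is precisely what allows each single-variable field $\Phiplus_{(0)}(z_j)$ to be rewritten through the well-defined two-point differences $\Phiplus_{(0)}(z_j,z_k)$, removing the base-point ambiguity. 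Hence under the neutrality condition $\OO$ is a well-defined Fock space field.

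Next I would establish $\Aut(D,q_\pm)$-invariance by showing that $\OO$ depends only on $(D,q_\pm)$ and not on the auxiliary uniformizing map $w\colon(D,q_-,q_+)\to(\mathbb{H},-1,1)$, which is determined only up to post-composition with $g\in\Aut(\mathbb{H},\pm1)$. Replacing $w$ by $g\circ w$ rescales the prefactor through the factors $w'$, $1-w^2$, $w-\bar w$, and $w_j-w_k$, while simultaneously transforming the exponential through the behaviour of $\Phiplus_{(0)}$; the neutrality condition is exactly the balance that makes these two changes cancel. Since every element of $\Aut(D,q_\pm)$ realizes such a change of $w$, this $w$-independence is the asserted invariance, equivalently $\LL_v\OO=0$ for the generator $v$ of $\Aut(\mathbb{H},\pm1)$.

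The substantive part is Ward's OPE, and since $T$ is the Virasoro field of $\FF_{(b)}$ (preceding proposition) it suffices to extract the singular part of $T(\zeta)\OO(\bfs z)$ by Wick's calculus. The only input is the contraction of the modified current $J=J_{(0)}+j$ with the exponent: in the identity chart $\E[J_{(0)}(\zeta)\Phiplus_{(0)}(z_j)]=-w'(\zeta)/(w(\zeta)-w(z_j))\sim -1/(\zeta-z_j)$, whereas $\E[J_{(0)}(\zeta)\Phiminus_{(0)}(z_j)]=w'(\zeta)/(w(\zeta)-\overline{w(z_j)})$ is regular at the interior node $z_j$ and $j$ is non-random. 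Thus as $\zeta\to z_j$ only the $j$-th holomorphic charge is singular,
$$J(\zeta)\,\OO(\bfs z)\sim\frac{-i\sigma_j}{\zeta-z_j}\,\OO(\bfs z),$$
and because the exponent couples only linearly to the bosonic field no pole of order $\ge 3$ can arise. Feeding this into $T=-\tfrac12 J*J+ib\pa J$ and contracting twice, the second-order coefficient is
$$-\tfrac12(-i\sigma_j)^2-ib(-i\sigma_j)=\frac{\sigma_j^2}{2}-\sigma_j b=h_j,$$
exactly as claimed; the $\bar\OO$ statement follows from the identical computation with $J_{(0)}$ contracted against the field $\Phiplus_{(0)}(z_j)$ occurring in $\bar\OO=\overline{\OO}$, which replaces $\sigma_j$ by $\sigma_{j*}$ and yields $\bar h_{j*}$.

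The step I expect to be the main obstacle is the first-order pole, namely showing the residue is precisely $\pa_{z_j}\OO$ and not merely a multiple of $\OO$. This requires collecting the subleading term of $-w'(\zeta)/(w(\zeta)-w(z_j))$, the regular value $j(z_j)$ of the drift, and the cross-contractions with the fields and prefactor at the remaining nodes, and checking that they reassemble into $\pa_{z_j}$ of the entire field $\prod M\prod I\cdot e^{\odot i(\cdots)}$. A cleaner route is to invoke that $\OO$ is an $[h_j,h_{j*}]$-differential (which the prefactor construction in the invariance step already supplies): once $\OO$ is a differential with the correct double pole, the equivalence between Ward's OPE and its residue form for the stress tensor $(A,\bar A)$ (cf.\ \cite[Propositions~5.3 and~5.10]{KM11}) forces the single pole to equal $\pa_{z_j}\OO$, which completes the proof.
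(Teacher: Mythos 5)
Your treatment of well-definedness and of the double-pole coefficient follows the paper's route: neutrality lets the exponent be rewritten through the bi-variant differences $\Phiplus_{(0)}(z_j,z_k)$, and the Wick-calculus contraction $J_{(0)}(\zeta)\,\OO(\bfs z)\sim -i\sigma_j\,\OO(\bfs z)/(\zeta-z_j)$ fed into $-\tfrac12 J*J+ib\,\pa J$ gives $h_j=\tfrac12\sigma_j^2-\sigma_j b$ exactly as in the paper. The genuine gap is your ``cleaner route'' for the first-order pole. The equivalence you invoke (Ward's OPE $\Leftrightarrow$ residue form of Ward's identity, and the chart-independence of Ward's OPE for differentials) is an equivalence between statements each of which already contains the residue information; it characterizes membership in $\FF(A,\bar A)$, which is precisely what is being proved here. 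Knowing that $\OO$ is an $[h_j,h_{j*}]$-differential and that the double pole has coefficient $h_j\OO$ does \emph{not} force the residue to be $\pa_{z_j}\OO$: an OPE of the form $h_j\OO/(\zeta-z_j)^2+(\pa_{z_j}\OO+f\,\OO)/(\zeta-z_j)$ with $f$ a non-random function is consistent with both hypotheses and violates Ward's OPE. So the argument is circular, and the direct computation you describe but defer --- matching the subleading term of the contraction, the regular value of the drift $j$, and the cross-contractions with the other nodes against $\pa_{z_j}$ of $\prod M\prod I_{j,k}\,e^{\odot i(\cdots)}$ --- is unavoidable. This is exactly what the paper does: it isolates $-F\,J_{(0)}(\zeta)\odot\OO-\tfrac12F^2\OO$ with $F=\E[J(\zeta)\sum(i\sigma_j\Phiplus_{(0)}(z_j)-i\sigma_{j*}\Phiminus_{(0)}(z_j))]$ and checks that the simple-pole coefficients $i\sigma_j J_{(0)}(z_j)\odot\OO+\big(\sigma_j\sigma_{j*}/(z_j-\bar z_j)+\sum_{k\ne j}(\cdots)\big)\OO$ reassemble into $\pa_{z_j}\OO$.

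Two smaller points. For $\Aut(D,q_\pm)$-invariance you assert a cancellation between the transformation of the prefactor and of the exponential; in fact, under neutrality the Wick exponential is already a $w$-independent Fock space field, so what must be checked is that the deterministic prefactor $\prod M^{(\sigma_j,\sigma_{j*})}\prod I_{j,k}$ is unchanged when $w\mapsto h\circ w$ with $h(z)=(az+1)/(z+a)$; the paper computes the ratio explicitly and the exponents $\sigma_j\big(\sigma_j+\sigma_{j*}+\sum_{k\ne j}(\sigma_k+\sigma_{k*})\big)$ vanish by neutrality. Your sketch identifies the right mechanism but should be carried out. Finally, for $\bar\OO$ note that the holomorphic charge of $\bar\OO$ at $z_j$ is $\bar\sigma_{j*}$ (since $\overline{\Phiminus_{(0)}}=\Phiplus_{(0)}$), which is what produces $\bar h_{j*}$; your phrasing gets the right answer but conflates $\sigma_{j*}$ with $\bar\sigma_{j*}$.
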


\begin{proof}
Under the neutrality condition, $\sum \sigma_j\Phiplus_{(0)}(z_j) - \sigma_{j*}\Phiminus_{(0)}(z_j)$ is a well-defined Fock space field,
$$\sum \sigma_j \Phi_{(0)}(z_1) + \sum_{j>1} \sigma_j\Phiplus_{(0)}(z_j,z_1) - \sigma_{j*}\Phiminus_{(0)}(z_j,z_1).$$
Suppose $\E\,[\OO^{(\sigma,\sigma_*)}]$ is expressed as $M^{(\bfs{\sigma ,\sigma_*})}, \wt M^{(\bfs{\sigma ,\sigma_*})}$ in terms of two different conformal transformations $w, \wt w$ from $(D,q_-,q_+)$ onto $(\mathbb{H},-1,1).$
Since a nontrivial element $h$ in $\Aut(\mathbb{H},-1,1)$ is of the form
$$h(z)=\frac{az+1}{z+a},\qquad (a\in \R\sm[-1,1]),$$
the ratio of $\wt{M}^{(\bfs\sigma ,\bfs{\sigma_*})}$ and $M^{(\bfs\sigma ,\bfs{\sigma_*})}$ is
$$\frac{(a^2 -1)^{\sum_{j=1}^n(\sigma_j +\sigma_{j*})}}
{\prod(w_j+a)^{\sigma_j^2+\sigma_j\sigma_{j*}+\sum_{k\ne j}\sigma_j(\sigma_k +\sigma_{k*})}(\bar w_j+a)^{\sigma_{j*}^2+\sigma_j\sigma_{j*}+\sum_{k\ne j}\sigma_{j*}(\sigma_k +\sigma_{k*})}}.
$$
Thus the rooted multi-vertex field $\OO^{(\bfs{\sigma},\bfs{\sigma_*})}$ with the neutrality condition is $\Aut(D,q_\pm)$-invariant.

Finally let us show that Ward's OPE holds for $\OO.$
Since the multi-vertex field is a differential, it is enough to verify Ward's OPE in the upper half-plane.
In the identity chart of the upper half-plane,
$$J=J_{(0)} +ib\frac{2z}{1-z^2}, \quad T=T_{(0)}-jJ_{(0)} + ib\partial J_{(0)} + (ib\pa j -\frac12j^2),$$
where $j=2ibz/(1-z^2)$ and $T_{(0)}=-\frac12J_{(0)}\odot J_{(0)}.$
In the simplest case $b=0,$ let us show that the singular part of operator product expansion of $T_{(0)}(\zeta)$ and $\OO(\bfs{z})$ is
$$\frac{\sigma^2}{2} \frac{\OO(\bfs{z})}{(\zeta-z_j)^2} +i\sigma\frac{J_{(0)}(z_j)\odot \OO(\bfs{z})}{\zeta-z_j}
+\Big(\frac{\sigma_j\sigma_{j*}}{z_j-\bar z_j}+\sum_{k\ne j}\frac{\sigma_j\sigma_k}{z_j-z_k}+\frac{\sigma_j\sigma_{k*}}{z_j-\bar z_k}\Big)\frac{\OO(\bfs{z})}{\zeta-z_j}$$
as $\zeta\to z_j.$
For this, let
$$F\equiv F(\zeta,\bfs{z}):=\E[J(\zeta)\sum \big(i\sigma_j\Phiplus_{(0)}(z_j)-i\sigma_{j*}\Phiminus_{(0)}(z_j)\big)] =-i\sum\Big(\frac{\sigma_j}{\zeta-z_j}+\frac{\sigma_{j*}}{\zeta-\bar z_j}\Big).$$
It follows from Wick's calculus that
$$T_{(0)}(\zeta)\,\OO(\bfs{z}) = T_{(0)}(\zeta)\odot \OO(\bfs{z}) - F \,J_{(0)}(\zeta)\odot \OO(\bfs{z}) -\frac12 \,F^2\,\OO(\bfs{z}).
$$
While the first term $T_{(0)}(\zeta)\odot \OO(\bfs{z})$ has no contribution to Ward's OPE for $\OO,$  the second term $- F(\zeta,\bfs{z}) \,J_{(0)}(\zeta)\odot \OO(\bfs{z})$ has the singular part
$$i\sigma\frac{J_{(0)}(z_j)\odot \OO(\bfs{z})}{\zeta-z_j},\qquad (\zeta\to z_j).$$
The singular part of the last term $-\frac12\,F(\zeta,\bfs{z}) ^2\,\OO(\bfs{z})$ is
$$\frac{\sigma^2}{2} \frac{\OO(\bfs{z})}{(\zeta-z_j)^2}
+\Big(\frac{\sigma_j\sigma_{j*}}{z_j-\bar z_j}+\sum_{k\ne j}\frac{\sigma_j\sigma_k}{z_j-z_k}+\frac{\sigma_j\sigma_{k*}}{z_j-\bar z_k}\Big)\frac{\OO(\bfs{z})}{\zeta-z_j},\qquad (\zeta\to z_j).$$
This proves Ward's OPE when $b=0.$
For $b\ne 0,$ we just need to show that
$$j(\zeta)J_{(0)}(\zeta)\,\OO(\bfs{z}) \sim 2\sigma_j \frac{z_j}{1-z_j^2}\frac{\OO(\bfs{z})}{\zeta-z_j},
\quad
ib \pa J_{(0)}(\zeta)\, \OO(z,z_0) \sim  -\sigma_j b\,\frac{\OO(\bfs{z})}{(\zeta-z_j)^2}.$$
Both of the singular OPEs follow from
$$ J_{(0)}(\zeta)\, \OO(z,z_0) \sim  -i\sigma_j \,\frac{\OO(\bfs{z})}{\zeta-z_j}.$$
Ward's OPE for $\bar\OO$ can be obtained in a similar way.
\end{proof}

%%%%%%%%%%%%%%%%%%%%%%%%%%%%%%%%%%%%%%%%%%%%%%%%%%%%%%%%%%%%%%%%%%%
\subsection{Rooted multi-vertex fields} \label{ss: O*}
In this subsection we introduce the (formal) multi-vertex fields $\OO^{(\bfs{\sigma},\bfs{\sigma_*};\sigma_-,\sigma_+)}$ rooted at two marked boundary points $q_-,q_+.$
The definition of rooted vertex fields can be arrived to by normalizing the tensor product of formal vertex fields, $\OO^{(\bfs\sigma,\bfs{\sigma_*})}(\bfs{z})$, $\OO^{(\sigma_-)}(\zeta_-),$ and $\OO^{(\sigma_+)}(\zeta_+)$ ($\zeta_\pm\in\pa D$) so that the limit exists as $(\zeta_-,\zeta_+)$ tends to $(q_-,q_+).$
For example, we define the rooted vertex field $\OO^{(\sigma,\sigma_*;\sigma_-,\sigma_+)}$ by
$$\OO^{(\sigma,\sigma_*;\sigma_-,\sigma_+)}(z) = M^{(\sigma,\sigma_*;\sigma_-,\sigma_+)}(z) \, e^{\odot i(\sigma\Phiplus_{(0)}(z)-\sigma_*\Phiminus_{(0)}(z)+\sigma_-\Phiplus_{(0)}(q_-)+\sigma_+\Phiplus_{(0)}(q_+))},$$
where $M^{(\sigma,\sigma_*;\sigma_-,\sigma_+)} = \E\,[\OO^{(\sigma,\sigma_*;\sigma_-,\sigma_+)}]$ is given by
\begin{align*}
M^{(\sigma,\sigma_*;\sigma_-,\sigma_+)}(z)&=(1-w)^{\nu^+}(1+w)^{\nu^-} (1-\bar{w})^{\nu_*^+}(1+\bar{w})^{\nu_*^-}(w-\bar{w})^{\sigma \sigma_*}
\\&\times
({w'_-})^{h_-}({w'_+})^{h_+}(w')^{h}(\overline{w'})^{h_*}, \qquad (w'_\pm=w'(q_\pm)).
\end{align*}
The dimensions $[h,h_*;h_-;h_+]$ and exponents are given by
\begin{equation}
h=\frac{\sigma^2}{2}-\sigma b, \quad h_*=\frac{\sigma_*^2}{2}-\sigma_* b , \quad h_\pm=\frac{\sigma_\pm^2}{2},
\end{equation}
and $\nu^\pm=\sigma(b+\sigma_\pm),\,\,
\nu_*^\pm=\sigma_*(b+\sigma_\pm).$
Let us explain this definition.
We express the correlation function $\E\,[\OO^{(\sigma,\sigma_*)}(z)\,\OO^{(\sigma_-)}(\zeta_-)\,\OO^{(\sigma_+)}(\zeta_+)]$ as
$$M^{(\sigma,\sigma_*)}(z)\,M^{(\sigma_-)}(\zeta_-)\,M^{(\sigma_+)}(\zeta_+)\,I_-(z,\zeta_-)\,I_+(z,\zeta_+)\,I(\zeta_-,\zeta_+),$$
where $M^{(\sigma_\pm)}(\zeta_\pm) = \E\,[\OO^{(\sigma_\pm)}(\zeta_\pm)]=(w'(\zeta_\pm))^{\lambda_\pm}(1-w(\zeta_\pm)^2)^{\mu_\pm},$ $(\lambda_\pm = \frac12\sigma_\pm^2-\sigma_\pm b, \, \mu_\pm = \sigma_\pm b)$ and the interaction terms $I_\pm(z,\zeta_\pm), I(\zeta_-,\zeta_+)$ are given by
$$I_\pm(z,\zeta_\pm)=(w-w(\zeta_\pm))^{\sigma\sigma_\pm}(\bar w-w(\zeta_\pm))^{\sigma_*\sigma_\pm}, \quad I(\zeta_-,\zeta_+) = (w(\zeta_-)-w(\zeta_+))^{\sigma_-\sigma_+}.$$
We now apply the following rooting rules to $\E\,[\OO^{(\sigma,\sigma_*)}(z)\,\OO^{(\sigma_-)}(\zeta_-)\,\OO^{(\sigma_+)}(\zeta_+)]:$
\begin{enumerate}
\item the term
$$(1-w(\zeta_\pm)^2)^{\mu_\pm}$$
in $M^{(\sigma_\pm)}(\zeta_\pm)$ is replaced by
$$(w'_\pm)^{\mu_\pm};$$
\item all other terms $w(\zeta_\pm)$ are replaced by $\pm1.$
\end{enumerate}
Thus $h_\pm = \lambda_\pm + \mu_\pm = \frac12\sigma_\pm^2$
and $\nu^\pm=\mu+\sigma\sigma_\pm,\nu_*^\pm=\mu_*+\sigma_*\sigma_\pm.$
The above rooting rules are obtained (up to constant) by
\begin{equation}
\OO^{(\sigma,\sigma_* ; \sigma_-,\sigma_+)}(z):=\lim_{\ve\to 0}
\frac{\OO^{(\sigma,\sigma_*)}(z)\OO^{(\sigma_-)}(\zeta^-_\ve)\OO^{(\sigma_+)}(\zeta^+_\ve)}
{(\zeta^-_\ve-q_-)^{\sigma_-b}(\zeta^+_\ve-q_+)^{\sigma_+b} },
\end{equation}
where $\zeta^\pm_\ve$ is at distance $\ve$ from $q_\pm$ in a given chart.
The definition of rooted 1-point vertex fields can be extended to the rooted multi-vertex fields as follows:
\begin{align*}
\OO^{(\bfs{\sigma},\bfs{\sigma_*};\sigma_-,\sigma_+)}=&(w'_-)^{h_-}(w'_+)^{h_+}\prod M_j \prod_{j<k} I_{j,k} \\&e^{\odot i(\sigma_-\Phiplus_{(0)}(q_-)+\sigma_+\Phiplus_{(0)}(q_+)+\sum \sigma_j \Phiplus_{(0)}(z_j) -\sigma_{j*}\Phiminus_{(0)}(z_j))},
\end{align*}
 where the interaction term $I_{j,k}$ is the same as \eqref{eq: interaction} and
$$M_j=(w'_j)^{h_j}(\overline{w'_j})^{h_{j*}}(1-w_j)^{\nu_j^+}(1+w_j)^{\nu_j^-} (1-\bar{w}_j)^{\nu_{j*}^+}(1+\bar{w}_j)^{\nu_{j*}^-}(w-\bar{w})^{\sigma \sigma_*}$$
with the exponents $\nu_j^\pm=\sigma_j(b+\sigma_\pm),\nu_{j*}^\pm=\sigma_{j*}(b+\sigma_\pm).$
The dimensions $[\bfs{h}, \bfs{h}_*;h_-,h_+]$ of rooted multi-vertex fields $\OO^{(\bfs{\sigma},\bfs{\sigma_*};\sigma_-,\sigma_+)}$ are given by
$$h_j=\frac{\sigma_j^2}{2}-\sigma_j b, \quad h_{j*}=\frac{\sigma_{j*}^2}{2}-\sigma_{j*} b , \quad h_\pm=\frac{\sigma_\pm^2}{2}.$$
If the neutrality condition
$$\sigma_++\sigma_-+\sum(\sigma_j+\sigma_{j*})=0$$
holds, then a rooted vertex field $\OO^{(\bfs{\sigma},\bfs{\sigma_*};\sigma_-,\sigma_+)}$ is a well-defined $\Aut(D,q_-,q_+)$-invariant Fock space field.
While the rooted multi-vertex field $\OO^{(\bfs\sigma,\bfs\sigma_*;\tau,\tau_*)}$ in the radial case can be viewed as the OPE exponential of the field
$$i\tau \Phiplus_{(b)}(q) - i\tau_*\Phiminus_{(b)}(q)+i\sum \sigma_j \Phiplus_{(b)}(z_j)-\sigma_{j*}\Phiminus_{(b)}(z_j),$$
the field $\OO^{(\bfs{\sigma},\bfs{\sigma_*};\sigma_-,\sigma_+)}$ in the dipolar case can be interpreted as the OPE exponential of the field
$$i\sigma_- \Phiplus_{(b)}(q_-) + i\sigma_+\Phiplus_{(b)}(q_+)+i\sum \sigma_j \Phiplus_{(b)}(z_j)-\sigma_{j*}\Phiminus_{(b)}(z_j).$$

Applying the rooting procedure, we define the normalized tensor product of rooted multi-vertex fields by
$$\OO^{(\bfs{\sigma},\bfs{\sigma_*};\sigma_-,\sigma_+)} \star \OO^{(\bfs{\tau,\tau_*};\tau_-,\tau_+)}=\OO^{(\bfs{\sigma+\tau,\sigma_*+\tau_*};\sigma_-+\tau_-,\sigma_++\tau_+)}.$$
One can view $\bfs\sigma,\bfs{\sigma_*}, \bfs\tau,\bfs{\tau_*}$ as divisors, maps from $\bar D\sm\{q_-,q_+\}$ to $\R$ which take the value $0$ at all but finitely many points.

%%%%%%%%%%%%%%%%%%%%%%%%%%%%%%%%%%%%%%%%%%%%%%%%%%%%%%%%%%%%%%%%%%%
\subsection{Ward's identity and equation for rooted multi-vertex fields}
We now extend the OPE family $\FF_{(b)}$ to include rooted multi-vertex fields.
This extension is natural in the sense that Ward's OPEs for multi-vertex fields survive under the rooting procedure.

\begin{prop}For a rooted multi-vertex field $\OO\equiv \OO^{(\bfs{\sigma},\bfs{\sigma_*};\sigma_-,\sigma_+)},$ Ward's OPE
$$T(\zeta)\OO(\bfs{z}) \sim h_j\frac{\OO(\bfs{z})}{(\zeta -z_j)^2}+\frac{\pa_{z_j}\OO(\bfs{z})}{\zeta-z_j}, \qquad(\zeta \to z_j),$$
holds and similar equation holds (with $\bar h_{j*}$) for $\bar{\OO}.$
\end{prop}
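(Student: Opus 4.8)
The plan is to obtain the rooted Ward's OPE from the one already proved for ordinary multi-vertex fields, using that the singular part of $T(\zeta)\OO(\bfs{z})$ at an interior node $z_j$ is insensitive to the boundary insertions at $q_\pm.$ Since the rooted field is an $\Aut(D,q_\pm)$-invariant differential, it suffices, exactly as for the unrooted fields, to verify the OPE in the $(\mathbb{H},-1,1)$-uniformization, so I fix an interior node $z_j$ and work near it.

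First I would use the rooting definition
$$\OO^{(\bfs{\sigma},\bfs{\sigma_*};\sigma_-,\sigma_+)}(\bfs{z})=\lim_{\ve\to0}\frac{\OO^{(\bfs{\sigma},\bfs{\sigma_*})}(\bfs{z})\,\OO^{(\sigma_-)}(\zeta^-_\ve)\,\OO^{(\sigma_+)}(\zeta^+_\ve)}{(\zeta^-_\ve-q_-)^{\sigma_-b}\,(\zeta^+_\ve-q_+)^{\sigma_+b}},\qquad \zeta^\pm_\ve\to q_\pm.$$
For each fixed $\ve$ the numerator is a multi-vertex field satisfying the neutrality condition, so the derivation of the preceding proposition applies and gives Ward's OPE at the interior node $z_j$ with dimension $h_j=\sigma_j^2/2-\sigma_jb.$ The key point is that this dimension depends only on the charge $\sigma_j$ at $z_j$ and not on $\sigma_\pm$: in the Wick computation the boundary charges enter only through the regular-at-$z_j$ terms $-i\sigma_\pm/(\zeta-\zeta^\pm_\ve)$ of $F(\zeta,\cdot)=\E[J(\zeta)(\cdots)],$ so they can contribute only to the $(\zeta-z_j)^{-1}$ coefficient, never to the $(\zeta-z_j)^{-2}$ coefficient. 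The denominator is a constant, independent of the Virasoro variable $\zeta$ and of the node $z_j,$ hence dividing by it leaves Ward's OPE unchanged: the double pole retains coefficient $h_j$ times the normalized field and the simple pole retains $\pa_{z_j}$ of the normalized field.

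Finally I would let $\ve\to0.$ On compact subsets of $D$ bounded away from $q_\pm,$ the rooting rules replace each $w(\zeta^\pm_\ve)$ by $\pm1$ and the divergent factor $(1-w(\zeta^\pm_\ve)^2)^{\mu_\pm}$ by $(w'_\pm)^{\mu_\pm}$ in a smooth, $z_j$-independent fashion, so the normalized correlation functions converge locally uniformly together with their $z_j$-derivatives; the two singular coefficients then converge to $h_j\OO(\bfs{z})$ and $\pa_{z_j}\OO(\bfs{z}),$ which is the asserted OPE, and the statement for $\bar\OO$ follows by conjugation with $h_j$ replaced by $\bar h_{j*}.$ The main obstacle is precisely this interchange of the limit $\zeta^\pm_\ve\to q_\pm$ with the residue extraction at $z_j$ (equivalently, the commutation of $\pa_{z_j}$ with $\lim_\ve$); I would settle it by the local uniform convergence above, or, should a self-contained argument be preferred, by redoing directly the Wick calculation of the preceding proposition for the rooted boson \eqref{eq: rooted boson} and checking that the extra cross terms $\sigma_j\sigma_\pm/\big((\zeta-z_j)(z_j\mp1)\big)$ are exactly $\pa_{z_j}$ of the new boundary factors $(1-w_j)^{\nu_j^+}(1+w_j)^{\nu_j^-}$ appearing in $M_j.$
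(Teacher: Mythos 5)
Your argument is correct and is exactly the route the paper takes: the paper offers no written proof of this proposition beyond the remark that ``Ward's OPEs for multi-vertex fields survive under the rooting procedure,'' and your proposal is a careful implementation of precisely that statement --- apply the unrooted Ward's OPE to $\OO^{(\bfs\sigma,\bfs\sigma_*)}(\bfs z)\,\OO^{(\sigma_-)}(\zeta_\ve^-)\,\OO^{(\sigma_+)}(\zeta_\ve^+)$ (which satisfies the neutrality condition), observe that the normalizing denominator is independent of $\zeta$ and $z_j$, and pass to the limit $\ve\to0$ using locally uniform convergence away from $q_\pm$. Your cross-check that the extra singular cross terms $\sigma_j\sigma_\pm/\bigl((\zeta-z_j)(z_j\mp1)\bigr)$ are accounted for by the logarithmic $z_j$-derivative of the new factors $(1-w_j)^{\nu_j^+}(1+w_j)^{\nu_j^-}$ (with $\nu_j^\pm=\sigma_j(b+\sigma_\pm)$) is accurate and supplies the self-contained verification the paper omits.
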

Applying the rooting procedure, we derive the following Ward's identity for  rooted multi-vertex fields.

\begin{prop}
 For a rooted vertex field $\OO(\bfs{z})\equiv \OO^{(\bfs{\sigma},\bfs{\sigma_*};\sigma_-,\sigma_+)}$ with the neutrality condition, we have Ward's identity
\begin{equation}
\E\,[\,W(v;\bar D\sm\{q_\pm\})\,\OO(\bfs{z})\,]=\E\,[\,\LL_v\,\OO(\bfs{z})\,]+(h_-\!+h_+)\,\E[\,\OO(\bfs{z})\,], \, (h_\pm=\frac12\sigma_\pm^2),
\end{equation}
where $v$ is a non-random local holomorphic vector field with $v(q_\pm)=0, v'(q_\pm)=1,$
and $z_j\in D_\hol(v).$
The Lie derivative operator $\LL_v$ does not apply to the points $q_\pm.$
\end{prop}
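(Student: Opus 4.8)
The plan is to obtain the identity by applying the global Ward identity \eqref{eq: global Ward} to a non-rooted tensor product and then passing to the rooting limit of Subsection~\ref{ss: O*}. First I would fix boundary nodes $\zeta_\pm$ close to $q_\pm$ and set
\[
Y:=\OO^{(\bfs{\sigma},\bfs{\sigma_*})}(\bfs{z})\,\OO^{(\sigma_-)}(\zeta_-)\,\OO^{(\sigma_+)}(\zeta_+),
\]
which is a genuine field in $\FF_{(b)}$ precisely because the neutrality condition holds. Since $v$ is holomorphic with a simple zero at each $q_\pm$ and $\zeta_\pm\in D_\hol(v)$ once $\zeta_\pm$ is near enough to $q_\pm$, \eqref{eq: global Ward} applies and gives $\E\,[\LL_v Y]=\E\,[W(v;\bar D\sm\{q_\pm\})\,Y]$. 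I would then divide through by the rooting normalization $(\zeta_--q_-)^{\sigma_-b}(\zeta_+-q_+)^{\sigma_+b}$ and let $\zeta_\pm\to q_\pm$, using that $\OO\equiv\OO^{(\bfs{\sigma},\bfs{\sigma_*};\sigma_-,\sigma_+)}$ is by definition exactly this normalized limit.

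The left-hand side is handled by Leibniz's rule for Lie derivatives (as in the proof of Proposition~\ref{Ward}): $\LL_vY$ splits into the local actions at the interior nodes $z_j$ and the local actions at the boundary nodes $\zeta_\pm$. The part supported on the $z_j$ commutes with the rooting limit and produces $\E\,[\LL_v\OO(\bfs{z})]$ with $\LL_v$ acting only on the $z_j$; this is the source of the proviso that $\LL_v$ does not touch $q_\pm$. The two boundary parts are the crux. By Ward's OPE for multi-vertex fields the local degree of $Y$ at $\zeta_\pm$ is $\lambda_\pm=\tfrac12\sigma_\pm^2-\sigma_\pm b$, so the local action there is $v(\zeta_\pm)\pa_{\zeta_\pm}Y+\lambda_\pm v'(\zeta_\pm)\,Y$. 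The weight term contributes $\lambda_\pm$ in the limit since $v'(\zeta_\pm)\to1$, while the transport term $v(\zeta_\pm)\pa_{\zeta_\pm}$, acting on the marked-point prefactor $(1-w^2)^{\mu_\pm}$ of $M^{(\sigma_\pm)}$ whose leading behavior near $q_\pm$ is $(\zeta_\pm-q_\pm)^{\mu_\pm}$ with $\mu_\pm=\sigma_\pm b$, contributes an additional $\mu_\pm$: differentiating $(\zeta_\pm-q_\pm)^{\mu_\pm}$ produces a simple pole that is cancelled exactly by $v(\zeta_\pm)\sim(\zeta_\pm-q_\pm)$, leaving the factor $\mu_\pm$. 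Every remaining contribution of the transport term vanishes after normalization because $v(q_\pm)=0$. Hence each boundary part converges to $(\lambda_\pm+\mu_\pm)\,\E\,[\OO(\bfs{z})]=\tfrac12\sigma_\pm^2\,\E\,[\OO(\bfs{z})]=h_\pm\,\E\,[\OO(\bfs{z})]$.

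For the right-hand side I would check that the rooting limit commutes with inserting $W(v;\bar D\sm\{q_\pm\})$: the only nodes $W(v)$ introduces are at the poles of $v$, which remain bounded away from $q_\pm$, and $vA$ has a removable singularity at $q_\pm$ since $v(q_\pm)=0$, so the normalized correlation tends to $\E\,[W(v;\bar D\sm\{q_\pm\})\,\OO(\bfs{z})]$. Assembling the three limits yields
\[
\E\,[W(v;\bar D\sm\{q_\pm\})\,\OO(\bfs{z})]=\E\,[\LL_v\OO(\bfs{z})]+(h_-+h_+)\,\E\,[\OO(\bfs{z})],
\]
which is the assertion. I expect the main obstacle to be the boundary computation: making rigorous that the transport term upgrades the node degree from $\lambda_\pm$ to $h_\pm=\lambda_\pm+\mu_\pm$, i.e.\ controlling the subleading terms in the expansion of $\OO^{(\sigma_\pm)}(\zeta_\pm)$ near $q_\pm$ so that, after dividing by $(\zeta_\pm-q_\pm)^{\sigma_\pm b}$ and letting $\zeta_\pm\to q_\pm$, exactly the stated constant survives. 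The normalization $v(q_\pm)=0,\ v'(q_\pm)=1$ is precisely what makes this extraction clean.
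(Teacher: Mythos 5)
Your proposal is correct and follows essentially the same route as the paper's proof: express the rooted field as the normalized limit of $\OO^{(\bfs\sigma,\bfs{\sigma_*})}(\bfs{z})\OO^{(\sigma_-)}(\zeta_-)\OO^{(\sigma_+)}(\zeta_+)$, apply the global Ward identity to the unrooted product, and split $\LL_v$ into the interior part (giving $\E[\LL_v\OO]$) and the boundary parts, where the weight term contributes $\lambda_\pm$ and the transport term acting on the $(1-w(\zeta_\pm)^2)^{\mu_\pm}$ prefactor contributes $\mu_\pm$, summing to $h_\pm=\lambda_\pm+\mu_\pm=\tfrac12\sigma_\pm^2$. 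The paper performs exactly this computation in the $(\mathbb{H},-1,1)$-uniformization, with the cancellation $\mu_\pm\,v(\zeta_\ve^\pm)/(\zeta_\ve^\pm\mp1)\to\mu_\pm$ coming from $v(q_\pm)=0$, $v'(q_\pm)=1$, just as you describe.
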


\begin{proof}
Since vertex fields are differentials, it suffices to perform the computation in the $(\mathbb{H},-1,1)$-uniformization.
Suppose that
$$\OO(\bfs{z})=\lim_{\ve\to0}\frac{\OO^{(\bfs\sigma,\bfs{\sigma_*})}(\bfs{z})\OO^{(\sigma_-)}(\zeta_\ve^-)\OO^{(\sigma_+)}(\zeta_\ve^+)}{(1+\zeta_\ve^-)^{\mu_-}(1-\zeta_\ve^+)^{\mu_+}},$$
where $\zeta_\ve^\pm$ is at distance $\ve$ from $\pm1.$
We write $\OO(\bfs{z},\zeta_-,\zeta_+)$ for the (unrooted) multi-vertex field $\OO^{(\bfs\sigma,\bfs{\sigma_*})}(\bfs{z})\OO^{(\sigma_-)}(\zeta_-)\OO^{(\sigma_+)}(\zeta_+).$

Since Ward's identity holds for $\OO(\bfs{z},\zeta_-,\zeta_+),$ it is enough to show that
$$\lim_{\ve \to 0}\frac{\E\,[\LL_v\, \OO(\bfs{z},\zeta_\ve^-,\zeta_\ve^+)]}{(1+\zeta_\ve^-)^{\mu_-}(1-\zeta_\ve^+)^{\mu_+}}=\E\,[\LL_v \,\OO(\bfs{z})] +(h_-\!+h_+)\,\E\,[\OO(\bfs{z})].$$
Clearly,
$$\lim_{\ve \to 0}\frac{\E\,[\LL_v(\bfs{z})\, \OO(\bfs{z},\zeta_\ve^-,\zeta_\ve^+)]}{(1+\zeta_\ve^-)^{\mu_-}(1-\zeta_\ve^+)^{\mu_+}}=\E\,[\LL_v\, \OO(\bfs{z})],$$
where $\LL_v(\bfs{z})=\sum (v(z_j)\pa_j + \overline{v(z_j)}\bp_j + h_j v'(z_j) + h_{j*} \overline{v'(z_j)}).$
Write $\LL_v(\zeta_\pm)$ for the differential operator, $\sum (v(\zeta_\pm)\pa_{\zeta_\pm} +\lambda_\pm v'(\zeta_\pm)).$
Then
\begin{align*}
\lim_{\ve \to 0}\frac{\E\,[\LL_v(\zeta_\pm)\,\OO(\bfs{z},\zeta_\ve^-,\zeta_\ve^+)]}{(1+\zeta_\ve^-)^{\mu_-}(1-\zeta_\ve^+)^{\mu_+}}
&=\lim_{\ve \to 0}(\mu_\pm \frac{v(\zeta_\ve^\pm)}{\zeta_\ve^\pm\mp1}+\lambda_\pm v'(\zeta_\ve^\pm))\frac{\E\,[\OO(\bfs{z},\zeta_\ve^-,\zeta_\ve^+)]}{(1+\zeta_\ve^-)^{\mu_-}(1-\zeta_\ve^+)^{\mu_+}}\\
&=h_\pm\, \E\,[\OO(\bfs{z})],
\end{align*}
which completes the proof.
\end{proof}

Since the dipolar Loewner vector field
$$v_{\zeta}(z)=\frac{1-z^2}{2}\frac{1-\zeta z}{\zeta -z}$$
in the upper half-plane satisfies $v_{\zeta}(\pm 1)=0, v'_{\zeta}(\pm 1 )=1,$ we can apply the previous proposition to the vector field $v_\zeta$ together with Proposition~\ref{Ward} and derive the following form of Ward's equation for a rooted multi-vertex field.

\begin{prop}\label{TO}
For a rooted multi-vertex field $\OO\equiv \OO^{(\bfs{\sigma},\bfs{\sigma_*};\sigma_-,\sigma_+)}$ in the extended OPE family $\FF_{(b)},$ we have
$$\frac{(1-\zeta^2)^2}{2} \E [T_{(b)} (\zeta) \OO ] =\E [(\LL^+_{v_{\zeta}}+\LL^-_{v_{\bar{\zeta}}})\OO]+(h_-\!+h_+-b^2)\E [\OO],$$
where $T_{(b)}$ and $\OO$ are evaluated in the identity chart of the upper half-plane.
\end{prop}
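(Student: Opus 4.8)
The plan is to work entirely in the identity chart of $\mathbb{H}$ and to reduce the claim to two facts already in hand: the representation of the holomorphic quadratic differential $A$ by the dipolar Loewner Ward functionals (Proposition~\ref{A}), and Ward's identity for rooted multi-vertex fields (the preceding proposition). I would begin by trading $T_{(b)}$ for $A_{(b)}$: as recorded in the proof of Proposition~\ref{Ward}, one has $T_{(b)}(\zeta)=A_{(b)}(\zeta)-2b^2/(1-\zeta^2)^2$ in the chart $\id_\mathbb{H}$, so that
$$\frac{(1-\zeta^2)^2}{2}\,\E[T_{(b)}(\zeta)\OO]=\frac{(1-\zeta^2)^2}{2}\,\E[A_{(b)}(\zeta)\OO]-b^2\,\E[\OO].$$
This already produces the $-b^2$ in the asserted anomaly, and the remaining task is to show that the first term on the right equals $\E[(\LL^+_{v_\zeta}+\LL^-_{v_{\bar\zeta}})\OO]+(h_-\!+h_+)\E[\OO]$.

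For that I would insert Proposition~\ref{A}. Since the nodes $z_1,\dots,z_n$ of $\OO$ are all distinct from $\zeta$, the simple pole of $v_\zeta$ at $\zeta$ avoids the nodes (equivalently $z_j\in D_\hol(v_\zeta)$), and the operator identity of Proposition~\ref{A} applies under the correlation with $\OO$:
$$\frac{(1-\zeta^2)^2}{2}\,\E[A_{(b)}(\zeta)\OO]=\E[W^+(v_\zeta;\overline{\mathbb{H}}\sm\{\pm1\})\,\OO]+\E[W^-(v_{\bar\zeta};\overline{\mathbb{H}}\sm\{\pm1\})\,\OO].$$
The key structural input is that $v_\zeta(\pm1)=0$ and $v'_\zeta(\pm1)=1$, precisely the normalization required by the preceding proposition. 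Applying that proposition, split into its holomorphic and antiholomorphic halves (the matching $W^+(v_\zeta)\leftrightarrow\LL^+_{v_\zeta}$, $W^-(v_{\bar\zeta})\leftrightarrow\LL^-_{v_{\bar\zeta}}$ used in Proposition~\ref{Ward}), I would obtain
$$\E[W^+(v_\zeta)\OO]=\E[\LL^+_{v_\zeta}\OO]+(h_-\!+h_+)\E[\OO],\qquad \E[W^-(v_{\bar\zeta})\OO]=\E[\LL^-_{v_{\bar\zeta}}\OO].$$
Summing these and combining with the first display yields exactly the stated equation.

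The main obstacle is the correct distribution of the root anomaly $(h_-\!+h_+)$ between the two halves. In the preceding proposition this anomaly is attached to the full real functional $W=2\,\Re\,W^+$ and the full Lie derivative $\LL_v$ for a single vector field; here it must instead be assigned entirely to the holomorphic functional $W^+(v_\zeta)$, while $W^-(v_{\bar\zeta})$ contributes none. I would justify this by the chirality of the root insertions $\sigma_-\Phiplus_{(0)}(q_-)+\sigma_+\Phiplus_{(0)}(q_+)$: tracing the rooting limit $\lim_{\ve\to0}\big(\mu_\pm\, v(\zeta_\ve^\pm)/(\zeta_\ve^\pm\mp1)+\lambda_\pm\, v'(\zeta_\ve^\pm)\big)=h_\pm$ from that proof, the anomaly arises solely from holomorphic derivatives of the chiral factors $\OO^{(\sigma_\pm)}(\zeta_\pm)$, so it is produced only by $W^+$. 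Because $\OO$ is a primary (differential) field, its singular $A$-OPE carries only the dimension and derivative data already encoded in $\LL^+_{v_\zeta}$, so no Virasoro-mode remainders survive and the computation closes.
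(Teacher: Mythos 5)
Your argument is correct and is precisely the route the paper intends: the paper gives no written proof beyond the remark that one applies the preceding rooted Ward identity to the Loewner vector field together with Proposition~\ref{A} (via Proposition~\ref{Ward}), i.e.\ the decomposition $\tfrac{(1-\zeta^2)^2}{2}A(\zeta)=W^+(v_\zeta)+W^-(v_{\bar\zeta})$ followed by $T=A-2b^2/(1-\zeta^2)^2$ in $\id_{\mathbb{H}}$. Your assignment of the full anomaly $(h_-+h_+)$ to the holomorphic half is the correct reading of the rooting computation --- the anomaly $h_\pm v'(q_\pm)$ is $\C$-linear in $v$ because the roots $\OO^{(\sigma_\pm)}$ are purely holomorphic boundary differentials, so the antilinear half $\E[W^-(v_{\bar\zeta})\OO]=\E[\LL^-_{v_{\bar\zeta}}\OO]$ carries no anomaly --- and nothing essential is missing.
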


We now generalize the previous proposition.

\begin{prop}\label{Ward4O}
For a 1-point rooted vertex field $V$, and a rooted multi-vertex field $\OO$ in $\FF_{(b)},$
in the identity chart of the upper half-plane, we have
\begin{align*}
\E\, [V(z)&\star \LL^+_{v_z} \OO] +\E\, [\LL^-_{v_{\bar{z}}} (V(z)\star\OO)]\\
&=\frac{(1-z^2)^2}{2}\,\E\,[(L_{-2}V)(z)\star\OO] -\frac{3z(1-z^2)}{2}\,\E\,[L_{-1}V(z)\star \OO]\\
&+(\frac{3z^2-1}{2}\,h(V) +b^2 -(h_-(V\star \OO)+h_+(V\star \OO)))\,\E\,[V(z)\star \OO],
\end{align*}
where $h(V)$ is the conformal dimension of $V$ with respect to $z$ and $h_\pm(V\star \OO)$ is the boundary dimension of $V\star \OO$ with respect to $q_\pm.$
\end{prop}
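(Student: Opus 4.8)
The plan is to deduce the identity from Proposition~\ref{TO} applied to the single rooted multi-vertex field $V(z)\star\OO$, combined with the singular operator product bookkeeping already performed in the proof of Proposition~\ref{Ward}. Since vertex fields are differentials, so is $V\star\OO$, and it suffices to compute in the $(\mathbb{H},-1,1)$-uniformization. As $V\star\OO$ is itself a rooted multi-vertex field, with boundary dimensions $h_\pm(V\star\OO)=\tfrac12(\sigma_\pm+\tau_\pm)^2$ (where $\sigma_\pm,\tau_\pm$ denote the boundary exponents of $V,\OO$ at $q_\pm$), Proposition~\ref{TO} gives
\begin{align*}
\frac{(1-\zeta^2)^2}{2}\E[T(\zeta)(V(z)\star\OO)] &= \E[(\LL^+_{v_\zeta}+\LL^-_{v_{\bar\zeta}})(V(z)\star\OO)]\\
&\quad + (h_-(V\star\OO)+h_+(V\star\OO)-b^2)\,\E[V(z)\star\OO].
\end{align*}
Rewriting the left side with $A=T+2b^2/(1-\zeta^2)^2$ moves the $-b^2$ to a $+b^2$ on the $A$-side, exactly as in the final passage from $A$ to $T$ in the proof of Proposition~\ref{Ward}.

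I would then apply Leibniz's rule for Lie derivatives to split the action over the interior nodes, $\LL^\pm_{v}(V(z)\star\OO)=(\LL^\pm_{v}V)(z)\star\OO+V(z)\star(\LL^\pm_{v}\OO)$. This is legitimate because rooting preserves the interior conformal dimensions, so $V\star\OO$ has dimension $h(V)$ at $z$ and $h_j$ at each $z_j$, while the boundary contributions at $q_\pm$ reside in the separate $(h_-+h_+)$ term rather than in $\LL_v$. The terms $V(z)\star(\LL^+_{v_z}\OO)$ and $\LL^-_{v_{\bar z}}(V(z)\star\OO)$ are regular as $\zeta\to z$ and assemble into the left-hand side of the proposition; the only singular contribution is $(\LL^+_{v_\zeta}V)(z)\star\OO$, which I would expand by the singular-part formula \eqref{eq: sing OPE} with $Y=V$, using that the operator product coefficients of $A(\zeta)V(z)$ are $A*_{-1}V=L_{-1}V$, $A*_{-2}V=L_0V$, $A*_{-3}V=L_1V$, and $A*V=L_{-2}V+2b^2V/(1-z^2)^2$.

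The final step mirrors the $\zeta\to z$ limit in the proof of Proposition~\ref{Ward}: subtracting the singular part and evaluating $P_{-1}(z,z)=\tfrac{3z(1-z^2)}2$, $P_{-2}(z,z)=\tfrac{1-3z^2}2$, $P_{-3}(z,z)=-\tfrac z2$ yields the coefficients $\tfrac{(1-z^2)^2}2$, $-\tfrac{3z(1-z^2)}2$ and $\tfrac{3z^2-1}2$ in front of the $L_{-2}V$, $L_{-1}V$ and $L_0V$ terms. The primality of the rooted vertex field $V$ is what makes the result clean: $L_0V=h(V)V$ collapses the $L_0$-term to $\tfrac{3z^2-1}2\,h(V)\,\E[V(z)\star\OO]$, and $L_1V=0$ annihilates the $A*_{-3}V$ contribution. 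Collecting the $b^2$ from the $A$-to-$T$ shift together with the $-(h_-(V\star\OO)+h_+(V\star\OO))$ from the boundary term of Proposition~\ref{TO} then produces the stated formula.

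The step I expect to be the main obstacle is the careful confluence analysis at $\zeta\to z$: one must verify that the $\star$-product commutes with the local operator product expansion at $z$, so that the coefficients $A*_jV$ may be computed as though $\OO$ were absent and then $\star$-multiplied back, and that Leibniz's rule is applied only to the interior nodes while the boundary dimensions $h_\pm(V\star\OO)$---which are not additive in $V$ and $\OO$ owing to the $\sigma_\pm\tau_\pm$ cross term---are correctly furnished by Proposition~\ref{TO} for the combined field. Everything else reduces to the coefficient matching already carried out for Proposition~\ref{Ward}.
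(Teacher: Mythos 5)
Your proposal is correct and follows essentially the same route as the paper: apply Proposition~\ref{TO} to the single rooted field $V(z)\star\OO$, split the holomorphic Lie derivative by Leibniz's rule, and absorb the singular part of $(\LL^+_{v_\zeta}V)(z)$ via \eqref{eq: sing OPE} together with $L_0V=h(V)V$ and $L_1V=0$. The only difference is cosmetic bookkeeping: the paper keeps $T$ throughout (using that $T$ and $A$ have the same singular OPE coefficients), while you detour through $A$ and reinstate the $b^2$ at the end, with the same result.
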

\noindent \textit{Remark.}
The Lie derivative operators $\LL_{v}^\pm$ in the above proposition do not apply to the points  $q_\pm.$
For the definition of a field $V\star\LL_v^+\OO,$ the rooting rules can be applied to the tensor product of a bi-vertex field and the Lie derivative of a multi-vertex field.

\begin{proof}
Since Leibniz's rule applies to $\star$-products, i.e.,
$$\LL_{v}^+(V\star\OO) = (\LL_{v}^+V)\star\OO + V\star(\LL_{v}^+\OO),$$
it follows from Proposition~\ref{TO} that
\begin{align*}
\E\,&[V(z)\star \LL^+_{v_\zeta}\OO] + \E\,[\LL^-_{v_{\bar\zeta}} (V(z)\star\OO)] \\
&=\frac{(1-\zeta^2)^2}{2} \,\E\,[T(\zeta) V(z)\star\OO]-\E\,[ (\LL^+_{v_\zeta} V)(z)\star\OO]- (h_-\!+h_+-b^2)\,\E\,[V(z)\star\OO],
\end{align*}
where $h_\pm = h_\pm(V\star \OO).$
By \eqref{eq: sing OPE},
\begin{align*}
\lim_{\zeta\to z} &\frac{(1-\zeta^2)^2}{2} \,T(\zeta) V(z)-\LL^+_{v_\zeta} V(z) \\
&= \frac{(1-z^2)^2}{2}\, T*V(z) -\frac{3z(1-z^2)}{2} \,T*_{-1}V (z)+ \frac{3z^2-1}{2}\,T*_{-2}V(z).
\end{align*}
Since $V$ is a primary field in $\FF_{(b)}$ with conformal dimension $[h,0],$ $L_0V = hV,$ see Proposition~\ref{Virasoro primary}.

\end{proof}

%%%%%%%%%%%%%%%%%%%%%%%%%%%%%%%%%%%%%%%%%%%%%%%%%%%%%%%%%%%%%%%%%%%
\subsection{Level two degeneracy equations}
In Subsection~\ref{ss: dipolar CFT}  we introduce insertion fields
$$e^{\odot \frac12ia( \Phiplus(p,q_-)+\Phiplus(p,q_+))}\qquad (p\in\pa D\sm \bar Q).$$
These Wick's exponentials can be normalized properly such that the normalized fields form a one-parameter family of $\Aut(D,q_-,q_+)$-invariant (Virasoro) primary fields
$$\Psi:=\OO^{(a,0;-\frac12a,-\frac12a)}$$
in the extended OPE family $\FF_{(b)}.$ By definition of rooted vertex fields,
\begin{equation} \label{eq: Psi}
\Psi (z)= (w'_-)^{\frac18a^2}(w'_+)^{\frac18a^2}\big(\frac{w'(z)}{1-w(z)^2}\big)^h e^{\odot \frac12ia( \Phiplus(z,q_-)+\Phiplus(z,q_+))},
\end{equation}
where $h=\frac12a^2-ab$, and $w$ is a conformal transformation from $(D,q_-,q_+)$ onto $(\mathbb{H},-1,1).$ (Recall that $w'_\pm=w'(q_\pm).$)
As rooted vertex fields, $\Psi$ are current primary fields (see Subsection~\ref{ss: Virasoro field}) with charges $q=a, q_*=0,$ i.e.,
\begin{equation} \label{eq: JPsi}
J_0 \Psi=-ia\Psi,\quad  J_0 \bar\Psi=0, \quad J_n\Psi=J_n\bar\Psi=0 \, (n\ge1),
\end{equation}
where the modes $J_n$ of the current field are defined as
\begin{equation} \label{eq: Jn}
J_n(z) :=\frac{1}{2\pi i}\oint_{(z)}(\zeta -z)^n J(\zeta)\,\dd \zeta.
\end{equation}
To show \eqref{eq: JPsi}, one needs to check the singular OPEs
$$J_{(0)}(\zeta)\Psi(z) \sim -ia\frac{\Psi(z)}{\zeta -z}, \qquad J_{(0)}(\zeta)\overline{\Psi(z)} \sim 0$$
in the identity chart of the upper half-plane.
The first singular OPE follows from Wick's calculus
$$J_{(0)}(\zeta)\Psi(z)=J_{(0)}(\zeta)\odot\Psi(z)+\frac{ia}{2}\,\E[J_{(0)}(\zeta)(\Phiplus_{(0)} (z,-1)+\Phiplus_{(0)}(z,1))]\,\Psi(z)$$
and
$\E[J_{(0)}(\zeta)(\Phiplus_{(0)} (z,-1)+\Phiplus_{(0)}(z,1))]=-2/(\zeta-z) + 2\zeta/(\zeta^2-1).$
The second OPE follows from the similar Wick's decomposition and the fact that
the correlation
$$\E[J_{(0)}(\zeta)(\Phiminus_{(0)} (z,-1)+\Phiminus_{(0)}(z,1))]=-\frac2{\zeta-\bar z} + \frac{2\zeta}{\zeta^2-1}$$
has no singular term.

\begin{prop}\label{level2} If $2a(a+b)=1,$ then
$$T_{(b)} * \Psi =\frac{1}{2a^2} \pa^2 \Psi.$$
\end{prop}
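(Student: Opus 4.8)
The plan is to read the claimed identity as an equality of two level-two descendants of $\Psi$ and to reduce both sides to the same pair of current descendants $J_{-1}^2\Psi$ and $J_{-2}\Psi$. Recall that $T*\Psi=L_{-2}\Psi$ (since $L_nX=T*_{-n-2}X$) and that, $\Psi$ being a Virasoro primary field, $L_{-1}\Psi=\partial\Psi$, so that $\partial^2\Psi=L_{-1}^2\Psi$. The whole computation is carried out in the identity chart of $\mathbb{H}$, where by \eqref{eq: JPsi} the field $\Psi$ is current primary with charge $-ia$: namely $J_0\Psi=-ia\Psi$ and $J_n\Psi=0$ for $n\ge1$, the modes $J_n$ being defined in \eqref{eq: Jn}.

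First I would import the Sugawara mode formula for $T_{(b)}=-\tfrac12 J*J+ib\partial J$. Writing the current OPE $J(\zeta)J(z)\sim-(\zeta-z)^{-2}$ in Heisenberg form $[J_m,J_n]=-m\,\delta_{m+n,0}$ and expanding the normal-ordered product, the Virasoro modes become
\[
L_n=-\tfrac12\sum_{k\in\mathbb{Z}}{:}J_kJ_{n-k}{:}\;-\;ib(n+1)J_n ,
\]
the background term $-ib(n+1)J_n$ coming from $ib\partial J$. Applying this with $n=-1$ and using that $J_m\Psi=0$ for $m\ge1$, only the two terms $k\in\{-1,0\}$ survive the normal ordering, each contributing $J_{-1}J_0\Psi=-iaJ_{-1}\Psi$; hence $L_{-1}\Psi=ia\,J_{-1}\Psi$. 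Since $L_{-1}\Psi=\partial\Psi$, this yields the basic relation $J_{-1}\Psi=(ia)^{-1}\partial\Psi$ expressing the current descendant through the derivative.

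Next I would compute the two sides. For the right-hand side, $\partial^2\Psi=L_{-1}^2\Psi=ia\,L_{-1}J_{-1}\Psi$; since $J_{(b)}$ is a pre-Schwarzian form of order $ib$, the commutator is $[L_{-1},J_{-1}]=J_{-2}$ (the cubic-pole anomaly of the $T$--$J$ OPE does not enter at mode $-1$), so
\[
\partial^2\Psi=ia\big(J_{-2}\Psi+ia\,J_{-1}^2\Psi\big)=ia\,J_{-2}\Psi-a^2J_{-1}^2\Psi .
\]
For the left-hand side, the Sugawara formula with $n=-2$ leaves the three terms $k\in\{-2,-1,0\}$, giving
\[
L_{-2}\Psi=i(a+b)\,J_{-2}\Psi-\tfrac12 J_{-1}^2\Psi .
\]
Comparing, the coefficients of $J_{-1}^2\Psi$ already agree ($-\tfrac12=\tfrac{1}{2a^2}\cdot(-a^2)$), while the coefficients of $J_{-2}\Psi$ match precisely when $i(a+b)=\tfrac{i}{2a}$, i.e. $2a(a+b)=1$, which is the hypothesis.

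The main obstacle is bookkeeping rather than any single hard idea: one must fix the sign conventions for the Heisenberg algebra (dictated by $\E[J(\zeta)J(z)]=-(\zeta-z)^{-2}$) and for the background-charge term in the Sugawara formula, verify that the deterministic modification $j=\E[J_{(b)}]$ does not alter the relations $J_0\Psi=-ia\Psi$ and $J_{n}\Psi=0\ (n\ge1)$ (it contributes only regular terms to the OPE with $\Psi$), and keep careful track of which normal-ordered terms survive on $\Psi$. An equivalent route, avoiding modes, is to compute the OPE $T(\zeta)\Psi(z)$ directly from the explicit exponential representation \eqref{eq: Psi} by Wick's calculus and extract the regular ($(\zeta-z)^0$) coefficient, which is $T*\Psi=L_{-2}\Psi$; the condition $2a(a+b)=1$ again emerges from matching it against $\partial^2\Psi$. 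I would present the mode computation, as it isolates most transparently the point at which the hypothesis on $a,b$ is used.
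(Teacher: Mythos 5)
Your argument is correct and is, in substance, the same as the paper's: the paper proves Proposition~\ref{level2} in one line by citing Proposition~\ref{level2current} (the level-two degeneracy of a current primary field with charge $q$ under $2q(b+q)=1$) with $q=a$, $q_*=0$ as given by \eqref{eq: JPsi}, and your Sugawara-mode computation of $L_{-1}\Psi$, $L_{-2}\Psi$, and $L_{-1}^2\Psi$ in terms of $J_{-1}^2\Psi$ and $J_{-2}\Psi$ is exactly the proof of that cited result specialized to $\Psi$. The bookkeeping you flag (sign conventions from $J(\zeta)J(z)\sim -(\zeta-z)^{-2}$, the background term $-ib(n+1)J_n$, and the fact that the deterministic part $j=\E[J_{(b)}]$ contributes no singular terms) all checks out.
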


This proposition follows immediately from the characterization of level two degenerate current primary fields, see Proposition~\ref{level2current}.
We combine Proposition~\ref{level2} with Ward's equations to prove Theorems~\ref{main X} and \ref{main O}.

%%%%%%%%%%%%%%%%%%%%%%%%%%%%%%%%%%%%%%%%%%%%%%%%%%%%%%%%%%%%%%%%%%%
\bs\section{Connection between dipolar SLE and CFT}
%%%%%%%%%%%%%%%%%%%%%%%%%%%%%%%%%%%%%%%%%%%%%%%%%%%%%%%%%%%%%%%%%%%
After we introduce the insertion fields $\Psi$ as boundary condition changing operators acting on Fock space functionals/fields, we prove that correlation functions of fields in the OPE family $\FF_{(b)}$ of $\Phi_{(b)}$ under the insertion of $\Psi(p)/\E\,[\Psi(p)]$ are dipolar $\SLE_\kappa(p\to Q)$ martingale-observables.
The main ingredient for its proof is BPZ-Cardy equation which is derived from the level two degeneracy equation for $\Psi$ and Ward's equation.
As applications, we discuss the restriction property of dipolar $\SLE_{8/3}$ and the dipolar version of Friedrich-Werner's formula.

%%%%%%%%%%%%%%%%%%%%%%%%%%%%%%%%%%%%%%%%%%%%%%%%%%%%%%%%%%%%%%%%%%%
\subsection{Boundary condition changing operator}
We define a boundary condition changing operator $\XX \mapsto \wh{\XX}$ as a linear operator acting on Fock space functionals in the following way.
By definition, $\XX \mapsto \wh{\XX}$ is given by the rules
$$\pa \XX \mapsto \pa \wh{\XX} , \quad \bp \XX \mapsto \bp \wh{\XX} , \quad \XX \odot \YY \mapsto \wh{\XX}\odot \wh{\YY},$$
and the formula
\begin{align*}
\sum \sigma_j \Phiplus_{(0)}(z_j) -\sigma_{j*} \Phiminus_{(0)}(z_j) &\mapsto
 \sum-\frac{i\sigma_ja}{2} \log\frac{w_j^2}{1-w_j^2} +\frac{i\sigma_{j*}a}{2} \log\frac{\bar w_j^2}{1-\bar w_j^2}  \\
&+\sum \sigma_j \Phiplus_{(0)}(z_j) -\sigma_{j*} \Phiminus_{(0)}(z_j),
\end{align*}
where $w$ is the conformal transformation from $(D,p,q_-,q_+)$ onto $(\mathbb{H},0,-1,1)$ and the neutrality condition $\sum_j(\sigma_j + \sigma_{j*}) = 0$ holds.

Let us denote by $\wh{\FF}_{(b)}$ the image of $\FF_{(b)}$ under this boundary condition changing operator $\XX \mapsto \wh{\XX}.$
Also we denote
$$\wh{\E}[\XX]:=\frac{\E[\Psi(p)\XX]}{\E[\Psi(p)]}=\E[e^{\odot \frac{1}{2}ia(\Phiplus(p,q_-)+\Phiplus(p,q_+))}\XX].$$
As in the chordal case (\cite[Proposition 14.1]{KM11}), we have
\begin{equation} \label{eq: hat}
\wh{\E}[\XX]=\E[\wh{\XX}],
\end{equation}
where $\XX$ is the string in $\FF_{(b)}$ with nodes in $\bar D\sm\{q_\pm\}.$

\medskip\noindent \textbf{Examples.} Let $w$ be the conformal transformation from $(D,p,q_{\pm})$ onto $(\mathbb{H},0,\pm 1).$

\smallskip \noindent
(a) The bosonic field $\wh\Phi$ is a real part of pre-pre-Schwarzian form of order $ib$,
$$\wh\Phi=\Phi+a \arg\frac{w^2}{1-w^2} =\Phi_{(0)}+a \arg\frac{w^2}{1-w^2} -2b \arg \frac{w'}{1-w^2};$$
(b) The current field $\wh{J}$ is a pre-Schwarzian form of order $ib$,
$$\wh{J}=J-ia \frac{w'}{w(1-w^2)} =J_{(0)}-ia \frac{w'}{w(1-w^2)}+ib(\frac{w''}{w'}+\frac{2ww'}{1-w^2});$$
(c) The Virasoro field $\wh{T}$ is a Schwarzian form of order $\frac1{12}c$,
$$\wh{T}=A_{(0)} - \hat{j}J_{(0)} +ib\pa J_{(0)} +\frac{c}{12} S_w +h_{1,2}\frac{w'^2}{w^2(1-w^2)} +4h_{0,1/2}(\frac{w'}{1-w^2})^2,$$
where $A_{(0)}  = - \frac12J_{(0)} \odot J_{(0)},$ $\hat{j}= \E\,[\wh{J}\,\,]$ and  $h=\frac12a^2-ab,$ $ h_{0,1/2}=\frac18a^2-\frac12b^2;$

\smallskip \noindent
(d) The multi-vertex field $\wh{\OO}^{(\bfs{\sigma},\bfs{\sigma_*})}$ is a $[\bfs{h}, \bfs{h}_*]$-differential ($h_j = \frac12\sigma_j^2 -\sigma_jb,$ $h_{j*} = \frac12\sigma_{j*}^2 -\sigma_{j*}b$),
$$\wh{\OO}^{(\bfs{\sigma},\bfs{\sigma_*})}(\bfs{z})=\prod \wh{M}^{(\sigma_j,\sigma_{j*})}(z_j)\, \prod_{j<k} I_{j,k} \, e^{i\odot \sum \sigma_j \Phiplus_{(0)}(z_j) -\sigma_{j*} \Phiminus_{(0)}(z_j)},$$
where $\wh{M}^{(\sigma_j,\sigma_{j*})}(z_j)=(w'_j)^{h_j}(\overline{w'}_j)^{h_{j*}}
(1-w^2_j)^{\wh{\mu}_j}(1-\bar{w}^2_j)^{\wh{\mu}_{j*}}w_j^{\sigma_ja}\bar{w}_j^{\sigma_{j*}a}(w_j-\bar w_j)^{\sigma \sigma_*}$ and interaction term $I_{j,k}$ is the same as \eqref{eq: interaction}.
The exponents are given by
$$\wh{\mu}_j=\mu_j -\frac12\sigma_ja=\sigma_j \big(b-\frac12a\big) ,\quad \wh{\mu}_{j*}=\mu_{j*}-\frac12\sigma_{j*}a=\sigma_{j*} \big(b-\frac12a\big).$$

%%%%%%%%%%%%%%%%%%%%%%%%%%%%%%%%%%%%%%%%%%%%%%%%%%%%%%%%%%%%%%%%%%%%
\subsection{BPZ-Cardy equations}
We now derive BPZ-Cardy equations in the dipolar case.
Suppose $X=X_1 (z_1)\cdots X_n (z_n)$ is the tensor product of fields $X_j$ in the OPE family $\FF_{(b)}.$
For $\xi \in \R$, we denote
$$\wh{\E}_{\xi}[X]=\E[e^{\odot \frac12ia(\Phiplus_{(0)}(\xi,-1)+\Phiplus_{(0)}(\xi,1))}X].$$

\begin{prop}\label{Cardy}
If $2a(a+b)=1,$ then we have
\begin{equation}
\wh{\E}_{\xi}[\LL_{v_{\xi}}X]=\frac{1}{2a^2}\Big(\frac{(1-\xi^2)^2}{2}\pa^2_{\xi} -\xi(1-\xi^2)\pa_{\xi}\Big)\wh{\E}_{\xi} [X], \quad v_{\xi}(z):= \frac{1-z^2}2\frac{1-\xi z}{\xi-z},
\end{equation}
where all fields are evaluated in the identity chart of $\mathbb{H}$ and $\pa_\xi = \pa + \bp.$
\end{prop}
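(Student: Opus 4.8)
The plan is to specialize Ward's equation for rooted multi-vertex fields (Proposition~\ref{Ward4O}) to $V=\Psi$ and $\OO=X$, evaluated at the real boundary point $z=\xi$ (regarding $X\in\FF_{(b)}$ as a rooted field with vanishing boundary charges at $\pm1$), and then to turn the resulting identity for the unnormalized correlation $F(\xi):=\E[\Psi(\xi)\star X]$ into the stated second-order equation for the normalized observable $\wh\E_\xi[X]=F(\xi)/R(\xi)$, where $R(\xi):=\E[\Psi(\xi)]$.

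First I would collapse the left-hand side of Proposition~\ref{Ward4O}. Since $\Psi$ is a current primary field with $q_*=0$, it is holomorphic, so $\bp\Psi=0$ and its antiholomorphic Lie derivative vanishes, $\LL^-_{v_{\bar\xi}}\Psi=0$; Leibniz's rule for $\star$-products then gives $\LL^-_{v_{\bar\xi}}(\Psi(\xi)\star X)=\Psi(\xi)\star\LL^-_{v_{\bar\xi}}X$. As $\xi$ is real we have $v_{\bar\xi}=v_\xi$, hence $\LL^+_{v_\xi}+\LL^-_{v_{\bar\xi}}=\LL_{v_\xi}$, and the left-hand side becomes $\E[\Psi(\xi)\star\LL_{v_\xi}X]=R(\xi)\,\wh\E_\xi[\LL_{v_\xi}X]$, which is exactly the quantity to be computed.

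Next I would process the right-hand side using the special structure of $\Psi$. The level two degeneracy equation (Proposition~\ref{level2}, valid under $2a(a+b)=1$) gives $L_{-2}\Psi=T*\Psi=\frac1{2a^2}\pa^2\Psi$, while $L_{-1}\Psi=\pa\Psi$; the relations $L_0\Psi=h\Psi$ and $L_n\Psi=0$ $(n\ge1)$ are already built into the constant term of Proposition~\ref{Ward4O}, which with $h_\pm(\Psi\star X)=\frac18a^2$ equals $C:=\frac{3\xi^2-1}{2}h+b^2-\frac14a^2$. Because $\Psi$ is holomorphic and $\xi$ is the only moving node, $\E[\pa^k\Psi(\xi)\star X]=\pa_\xi^kF$, so the right-hand side reads $\frac{(1-\xi^2)^2}{4a^2}\pa_\xi^2F-\frac{3\xi(1-\xi^2)}{2}\pa_\xi F+CF$.

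It then remains to normalize. From \eqref{eq: Psi} in the identity chart one has $R(\xi)=(1-\xi^2)^{-h}$; writing $F=uR$ with $u:=\wh\E_\xi[X]$ and $\rho:=R'/R=2h\xi/(1-\xi^2)$, I would substitute $\pa_\xi F/R=u'+\rho u$ and $\pa_\xi^2F/R=u''+2\rho u'+(\rho'+\rho^2)u$ and divide the identity by $R$. Matching the resulting operator against the target $\frac1{2a^2}(\frac{(1-\xi^2)^2}{2}\pa_\xi^2-\xi(1-\xi^2)\pa_\xi)$ reduces to three scalar identities: the $u''$ coefficients agree automatically, the $u'$ coefficients agree precisely when $h=\frac32a^2-\frac12$ (equivalently $2a(a+b)=1$), and the $u$ coefficients, involving $\rho'+\rho^2$, $\rho$, $b^2$, and $h_\pm=\frac18a^2$, cancel once the same relations for $h$ and $b$ are inserted. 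I expect this last cancellation to be the main obstacle: the unnormalized Ward identity produces derivatives of $F$, and it is the $\xi$-dependence of the partition function $R(\xi)$, entering through $\rho$ and $\rho'+\rho^2$, that must conspire with $C$ (carrying $b^2$ and the boundary dimensions $h_\pm$) to eliminate all zeroth- and first-order remainders; here the hypothesis $2a(a+b)=1$ is used in an essential way, and the boundary-root contributions that distinguish the dipolar from the chordal case must be tracked with care.
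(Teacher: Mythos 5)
Your proposal is correct and follows essentially the same route as the paper: insert $\Psi$, apply Ward's equation (with the boundary-root contribution $h_-+h_+=\tfrac{a^2}{4}$) together with $L_{-1}\Psi=\pa\Psi$ and the level-two degeneracy $L_{-2}\Psi=\tfrac1{2a^2}\pa^2\Psi$, and then conjugate by the one-point function $\E[\Psi(\zeta)]=(1-\zeta^2)^{-h}$ so that the numerology $2a(a+b)=1$ eliminates the zeroth- and first-order remainders. The only cosmetic differences are that the paper carries out the computation for $\zeta\in\mathbb{H}$ and passes to the limit $\zeta\to\xi$ at the end rather than working at the real point directly, and that it normalizes up front by setting $R_\zeta=(1-\zeta^2)^h\,\E[\Psi(\zeta)X]$ instead of substituting $F=uR$ and matching coefficients.
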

\begin{proof}
In the $(\mathbb{H},0,-1,1)$-uniformization, the rooted vertex field $\Psi=\OO^{(a,0;-\frac12a,-\frac12a)}$ is evaluated at $\xi$ as
$$\Psi(\xi)=(1-\xi^2)^{-h}e^{\odot \frac12ia(\Phiplus_{(0)}(\xi,-1)+\Phiplus_{(0)}(\xi,1))},$$
where $h=\frac12a^2 -ab.$
For $\zeta \in \mathbb{H},$ let
$$R_{\zeta} \equiv R(\zeta; z_1 ,z_2 , \cdots ,z_n)=\E[(1-\zeta^2)^h \Psi(\zeta) X].$$
By Ward's equation (Proposition~\ref{Ward}), $L_{-1}\Psi = \pa \Psi,$ and level two degeneracy equation (Proposition~\ref{level2}) for the rooted vertex field $\Psi,$ we have
\begin{align*}
\E[\Psi(\zeta)(\LL^+_{v_{\zeta}}X+\LL^-_{v_{\bar{\zeta}}}X)]&=\frac1{2a^2}\frac{(1-\zeta^2)^2}{2}\E[(\partial^2\Psi)(\zeta)X]-\frac{3\zeta(1-\zeta^2)}{2}\E[(\pa \Psi)(\zeta)X]\\
&+(\frac{3\zeta^2-1}{2}h +b^2 -\frac{a^2}{4})\E[\Psi(\zeta)X].
\end{align*}
(We also use the fact that $\Psi$ is a holomorphic field, and therefore $\LL_v^-\Psi(\zeta)=0.$)
Due to the numerology $2a(a+b)=1,$ it simplifies that
$$\E[(1-\zeta^2)^h \Psi(\zeta)(\LL^+_{v_{\zeta}}X+\LL^-_{v_{\bar{\zeta}}}X)] =\frac{1}{2a^2}\Big(\frac{(1-\zeta^2)^2}{2}\pa^2 R_{\zeta} -\zeta (1-\zeta^2)\pa R_{\zeta}\Big),$$
where $\pa$ is the operator of differentiation with respect to the complex variable $\zeta.$
We now take the limits of both sides as $\zeta\to\xi.$
Since $\xi$ is real, the left-hand side converges to
$$\E[(1-\xi^2)^h \Psi(\xi)\LL_{v_{\xi}}X]=\wh{\E}_{\xi}[\LL_{v_{\xi}}X].$$
On the other hand, since $\pa_\xi = \pa + \bp,$ and the rooted vertex field $\Psi$ is holomorphic, $\pa R_\zeta,$ and $\pa^2 R_\zeta$ in the right-hand sides converge to $\pa_\xi R_\xi$ and $\pa_\xi^2 R_\xi,$ respectively.
\end{proof}

%%%%%%%%%%%%%%%%%%%%%%%%%%%%%%%%%%%%%%%%%%%%%%%%%%%%%%%%%%%%%%%%%%%
\subsection{Dipolar SLE martingale-observables} \label{ss: MOs}
It is convenient to describe dipolar SLEs in terms of the $(\mathbb{H},-1,1)$-uniformization.
Let $\xi_t =(e^{\sqrt{\kappa}B_t}-1)/(e^{\sqrt{\kappa}B_t}+1)$ and let $g_t$ be the dipolar SLE map from $(D_t ,\gamma_t,Q)$ onto $(\mathbb{H},\xi_t,\R\sm[-1,1]).$
Then $g_t$ satisfies
\begin{equation} \label{eq: dgt}
\pa_t g_t (z)=-\frac{1-g_t^2(z)}{2}\frac{1-\xi_t g_t(z)}{\xi_t-g_t(z)},
\end{equation}
where $g_0:(D, p,Q)\to(\mathbb{H},0, \R\sm[-1,1])$ is the conformal map from $D$ onto the upper half-plane $\mathbb{H}.$
Let us restate Theorem~\ref{main X} and present its proof.

\begin{theorem}If $X_j$'s are Fock space fields in the OPE family $\FF_{(b)}$, then a non-random field
$$M(z_1, \cdots, z_n ) =\wh{\E} [ X_1 (z_1) \cdots X_n (z_n)]$$
is a martingale-observable for dipolar $\SLE_{\kappa}.$
\end{theorem}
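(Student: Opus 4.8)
The plan is to show that the process $M_t(\mathbf z):=M(z_1,\dots,z_n)$ evaluated along the SLE flow is a local martingale by computing its It\^o differential and checking that the drift vanishes. Concretely, I would work in the $(\mathbb H,-1,1)$-uniformization, using the map $g_t$ satisfying \eqref{eq: dgt} and the driving function $\xi_t=(e^{\sqrt\kappa B_t}-1)/(e^{\sqrt\kappa B_t}+1)$. The quantity to track is
$$
M_t(\mathbf z)=\wh{\E}_{\xi_t}\big[(g_t)_*X\big](g_t(z_1),\dots,g_t(z_n)),
$$
that is, the conformally transported string $X$ written in the moving frame, with the boundary condition changing operator rooted at the current tip $\xi_t$. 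Because each $X_j$ is a field in $\FF_{(b)}$ with definite conformal type (differential, pre-Schwarzian, or Schwarzian form), the transformation rules recorded in Subsection~\ref{ss: SLE MO} tell me exactly how the non-random correlator $M$ pulls back under $g_t$, so $M_t$ is an explicit function of $\xi_t$ and of the $g_t(z_j)$ together with their $z$-derivatives.

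The key computation is It\^o's formula. First I would differentiate the geometric factors: from \eqref{eq: dgt} the flow of each node $g_t(z_j)$ and of the derivatives $g_t'(z_j)$ are given by the dipolar Loewner vector field $v_{\xi_t}$, and the time derivative of the transported field collects precisely into a Lie-derivative term $\LL_{v_{\xi_t}}X$. Second, I would compute the It\^o correction coming from the stochastic driving term. Since $\xi_t$ solves an SDE driven by $B_t$, and $\wh{\E}_{\xi}[X]$ depends on $\xi$ through the insertion point, the $\dd\langle \xi\rangle_t$ term produces a second-order operator in $\xi$ acting on $\wh\E_\xi[X]$. A short calculation with the chain rule, using $\dd\xi_t = \tfrac12(1-\xi_t^2)\,\dd B_t\sqrt\kappa + (\text{drift})\,\dd t$ and $a=\sqrt{2/\kappa}$, shows that the martingale part is the $B_t$-differential and the total $\dd t$-coefficient is exactly
$$
\wh\E_{\xi_t}[\LL_{v_{\xi_t}}X]-\frac{1}{2a^2}\Big(\frac{(1-\xi_t^2)^2}{2}\pa_{\xi}^2-\xi_t(1-\xi_t^2)\pa_\xi\Big)\wh\E_{\xi_t}[X].
$$

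The crucial step is then to invoke Proposition~\ref{Cardy}: the BPZ-Cardy equation asserts, under the numerology $2a(a+b)=1$ (equivalently $b=\sqrt{\kappa/8}-\sqrt{2/\kappa}$), that this drift vanishes identically. Thus $M_t$ has zero drift and is a local martingale, which is precisely the martingale-observable property. I expect the main obstacle to be bookkeeping: I must verify that the deterministic time-derivative from the Loewner flow really assembles into the single Lie-derivative term $\LL_{v_{\xi_t}}X$ (rather than into separate $\LL^+$ and $\LL^-$ pieces with spurious boundary contributions), and that the quadratic-variation term matches the second-order $\xi$-operator in Proposition~\ref{Cardy} with the correct constant $1/(2a^2)$. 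This requires carefully aligning the normalization of $v_\zeta$, the factor $\sqrt\kappa$ in the driving SDE, and the identity $\pa_\xi=\pa+\bp$ used in the proof of the Cardy equation. Once these constants are reconciled, the conclusion is immediate from Proposition~\ref{Cardy}; the remaining verification that the local martingale is well-defined up to the relevant stopping times follows from the stopping convention stated in Subsection~\ref{ss: SLE MO}.
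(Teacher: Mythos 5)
Your proposal is correct and follows essentially the same route as the paper: represent $M_t$ via conformal invariance as $m(\xi_t,t)=(R_\xi\,\|\,g_t^{-1})$ with $R_\xi=\wh{\E}_\xi[X]$, apply It\^o's formula, identify the deterministic time-derivative of the flow with $-\LL_{v_{\xi_t}}$ via the dipolar Loewner vector field, and cancel the resulting drift against the quadratic-variation term using the BPZ-Cardy equation of Proposition~\ref{Cardy} (note $\frac{1}{2a^2}=\frac{\kappa}{4}$, matching the paper's constant). The bookkeeping concerns you flag are exactly the points the paper handles, so no further ideas are needed.
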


\begin{proof}
Conformal invariance allows us to represent the process
$$M_t (z_1,\cdots,z_n)\equiv M_{(D_t ,\gamma_t , Q)}(z_1,\cdots, z_n)$$
as
$$M_t =m(\xi_t ,t) ,\,\, m(\xi, t )=(R_{\xi} \,\|\, g^{-1}_t ),$$
where $g_t :(D_t,\gamma_t , q_-,q_+) \to (\mathbb{H},\xi_t ,-1,1)$ is the dipolar SLE map driven by $\xi_t$ and
$$R_{\xi} (z_1,\cdots,z_n)=\wh{\E}_{\xi}[X_1(z_1)\cdots X_n(z_n)].$$
It\^o's formula can be applied to $m(\xi_t ,t)$ since the function $m(\xi,t)$ is smooth in both $\xi$ and $t.$
Since the driving process $\xi_t =(e^{\sqrt{\kappa}B_t}-1)/(e^{\sqrt{\kappa}B_t}+1)$ satisfies
$$\dd\xi_t =\frac{\sqrt{\kappa}}{2}(1-\xi_t^2)\,\dd B_t -\frac{\kappa}{4}\,\xi_t(1-\xi_t^2)\,\dd t,$$
It\^o's formula  shows that  $M_t$ is a semi-martingale with the drift term
$$\frac{\kappa}{4}\Big(\frac{(1-\xi^2)^2}{2}\pa_{\xi}^2 -\xi (1-\xi^2)\pa_{\xi}\Big)\Big|_{\xi=\xi_t}m(\xi ,t)\,\dd t +\frac{\dd}{\dd s}\Big|_{s=0}(R_{\xi}\,\|\, g_{t+s}^{-1})\,\dd t,$$
where $L_t = \pa_s\big|_{s=0}(R_{\xi}\,\|\, g_{t+s}^{-1})=\pa_s\big|_{s=0} (R_{\xi}\,\|\, g_t^{-1} \circ f_{s,t}^{-1}),$ and $f_{s,t} =g_{t+s} \circ g_t^{-1}.$
It follows from \eqref{eq: dgt} that the time-dependent flow $f_{s,t}$ satisfies
$$\frac{\dd}{\dd s}f_{s,t} (\zeta) = - v_{\xi_{t+s}}(f_{s,t}(z)), \quad v_{\xi}(z):= \frac{1-z^2}2\frac{1-\xi z}{\xi-z}.$$
Thus $f_{s,t}$ can be approximated by $\id -s v_{\xi_t}+o(s)$ as $s\to 0.$
Since $X_j$'s depend smoothly on local charts,
$$L_t =-(\LL_{v_{\xi_t}}R_{\xi_t}\,\|\,g_t^{-1}).$$
It follows from BPZ-Cardy equations that $M_t$ is driftless.
\end{proof}

%%%%%%%%%%%%%%%%%%%%%%%%%%%%%%%%%%%%%%%%%%%%%%%%%%%%%%%%%%%%%%%%%%%
\subsection{The restriction property}
In this subsection we present CFT theoretic proof for the restriction property of the dipolar $\SLE_{8/3};$ the dipolar $\SLE_{8/3}$ path in $(\mathbb{H},-1,1)$ conditioned to avoid a fixed compact hull $K$ with $ \pa K \cap \R \subseteq (-1,1)\sm\{0\}$ has the same distribution as the dipolar  $\SLE_{8/3}$ path in $(\mathbb{H}\sm K,-1,1).$

 Let $\kappa \leq 4.$
On the event $\gamma(0,\infty)\cap K =\emptyset$, we denote $\Omega_t =g_t(D_t\sm K),\wt{\gamma}=\psi_K \circ \gamma$  and define a conformal map $h_t:\Omega_t\to\mathbb{H}$ by
$$h_t:=\wt{g}_t\circ \psi_K \circ g^{-1}_t,$$
where $\wt{g}_t$ is a dipolar Loewner map from $(\mathbb{H} \sm \wt{\gamma}[0,t],-1,1)$ onto $(\mathbb{H},-1,1)$ and $\psi_K$ is the conformal transformation from $(\mathbb{H} \sm K ,-1,1)$ onto $(\mathbb{H},-1,1)$ such that $\psi'_K(-1)=\psi'_K(1).$
Let
$$M_t=(1-\xi_t^2)^{\lambda}\E(\Psi_{\Omega_t}^{\eff}(\xi_t) \,\|\, \id_{\Omega_t}),$$
where $\Psi^\eff$ is the effective boundary condition changing operator, see \eqref{eq: eff}.
Then
\begin{equation} \label{eq: M}
M_t=\Big(\frac{(1-\xi_t^2)h_t'(\xi_t)}{1-h_t(\xi_t)^2}\Big)^{\lambda}(h_t'(-1)h_t'(1))^{\mu},
\end{equation}
where
\begin{align*}
\lambda=h(\Psi^{\eff})=\frac{6-\kappa}{2\kappa}, \quad \mu=h_-(\Psi^{\eff})=h_+(\Psi^{\eff})=\frac{a^2}{8}-\frac{b^2}{2}=\frac{(\kappa-2)(6-\kappa)}{16\kappa}.
\end{align*}

\begin{lem}\label{restriction1} The process $M_t$ is a semi-martingale with the drift term
$$\frac{c}{24}(1-\xi_t^2)^2\,S_{h_t}(\xi_t) \,M_t \,\dd t.$$
\end{lem}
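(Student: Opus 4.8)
The plan is to compute the It\^o drift of $M_t$ directly from the explicit formula \eqref{eq: M}, the one nontrivial input being the evolution equation for the uniformizing maps $h_t=\wt g_t\circ\psi_K\circ g_t^{-1}$ on the avoidance event. First I would differentiate this composition in $t$, using the dipolar Loewner equation \eqref{eq: dgt} for $g_t$ and the corresponding equation for $\wt g_t$. Because $\wt g_t$ uniformizes the image curve $\wt\gamma[0,t]=\psi_K(\gamma[0,t])$, which inherits the time parameter of $\gamma$ rather than being run at unit strip-capacity speed, this introduces a time-change factor $a_t$ and gives, for $z\ne\xi_t$,
\[
\pa_t h_t(z)=v_{\xi_t}(z)\,h_t'(z)-a_t\,v_{\wt\xi_t}\big(h_t(z)\big),\qquad \wt\xi_t:=h_t(\xi_t).
\]

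The key structural step is to pin down $a_t$. Since $h_t$ extends analytically across the boundary driving point with $h_t(\xi_t)=\wt\xi_t$ finite, the right-hand side must be regular at $z=\xi_t$; both $v_{\xi_t}(z)$ and $v_{\wt\xi_t}(h_t(z))$ have simple poles there, and from
\[
v_\xi(z)=-\frac{(1-\xi^2)^2}{2(z-\xi)}+\frac{3\xi(1-\xi^2)}{2}+\frac{1-3\xi^2}{2}(z-\xi)+O\big((z-\xi)^2\big)
\]
residue matching forces $a_t=\big((1-\xi_t^2)h_t'(\xi_t)/(1-\wt\xi_t^2)\big)^2$, precisely the square of the base of the $\lambda$-power in \eqref{eq: M}, so that $M_t=a_t^{\lambda/2}\big(h_t'(-1)h_t'(1)\big)^\mu$. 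With this choice the two simple poles cancel completely, $\pa_t h_t$ is in fact analytic in a neighbourhood of $\xi_t$, and hence $\xi\mapsto\big(\tfrac{(1-\xi^2)h_t'(\xi)}{1-h_t(\xi)^2}\big)^\lambda(h_t'(-1)h_t'(1))^\mu$ is jointly smooth near $(\xi_t,t)$, so It\^o's formula applies cleanly.

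Next I would differentiate the flow equation in $z$ to orders $1,2,3$ and read off, at the driving point, the pathwise $t$-derivatives of $\wt\xi_t$, $h_t'(\xi_t)$, $h_t''(\xi_t)$ and $h_t'''(\xi_t)$; at the fixed regular points $z=\pm1$ the quantities $h_t'(\pm1)$ evolve by a plain ODE and contribute only through $\pa_t$ (no martingale part, no second-order correction). Writing $\log M_t=\tfrac\lambda2\log a_t+\mu\log\big(h_t'(-1)h_t'(1)\big)$ and applying It\^o with $\dd\xi_t=\tfrac{\sqrt\kappa}{2}(1-\xi_t^2)\,\dd B_t-\tfrac\kappa4\xi_t(1-\xi_t^2)\,\dd t$ and $\dd\langle\xi\rangle_t=\tfrac\kappa4(1-\xi_t^2)^2\,\dd t$, the drift assembles from the $\pa_t$ terms of the flow equation, the first-order term weighted by $-\tfrac\kappa4\xi_t(1-\xi_t^2)$, and the second-order term weighted by $\tfrac\kappa8(1-\xi_t^2)^2$; the latter is what creates $h_t'''/h_t'$ and $(h_t''/h_t')^2$. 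Substituting the explicit $\lambda,\mu$ — equivalently using $2a(a+b)=1$ together with $c=1-12b^2$ — every contribution not proportional to $S_{h_t}(\xi_t)=\tfrac{h_t'''}{h_t'}-\tfrac32(\tfrac{h_t''}{h_t'})^2$ must cancel, leaving the drift $\tfrac{c}{24}(1-\xi_t^2)^2 S_{h_t}(\xi_t)M_t\,\dd t$ and identifying the $\dd B_t$ coefficient as the local-martingale part. This is the expected value: $\tfrac{c}{24}(1-\xi^2)^2=\tfrac{1}{12}c\cdot\tfrac{(1-\xi^2)^2}{2}$ is the Schwarzian-form order $\tfrac1{12}c$ of the Virasoro field carried along the dipolar Loewner field, i.e.\ the $L_{-2}$-weight in Ward's equation (Proposition~\ref{Ward}) times the Schwarzian $S_{h_t}$.

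The main obstacle will be the bookkeeping near the singular field at the driving point: fixing $a_t$ by residue matching and then propagating the constant, linear and quadratic coefficients of $v_{\xi_t}$ through the $z$-differentiated flow equation to obtain consistent evolutions of $h_t''(\xi_t)$ and $h_t'''(\xi_t)$. Individually the $\pa_t$ contributions and the It\^o $\pa_\xi^2$ contribution are large and unwieldy, and only their precise combination — dictated by the values of $\lambda$ and $\mu$ — collapses into the Schwarzian; verifying that the non-Schwarzian remainder vanishes identically for every $\kappa\le4$, not merely at $\kappa=8/3$, is the crux. A reassuring check is $\kappa=8/3$, where $c=0$, the drift vanishes, and $M_t$ becomes the local martingale underlying the restriction property of Theorem~\ref{restriction}.
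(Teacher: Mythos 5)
Your route is correct in outline but genuinely different from the paper's. You compute the drift of $M_t$ by brute force from the Loewner flow of $h_t$: the flow equation $\dot h_t(z)=h_t'(z)v_{\xi_t}(z)-a_t\,v_{h_t(\xi_t)}(h_t(z))$, the residue-matching identification $a_t=\big((1-\xi_t^2)h_t'(\xi_t)/(1-h_t(\xi_t)^2)\big)^2$, your Laurent expansion of $v_\xi$ at the driving point, and the fixed-point data $v_\xi(\pm1)=0$, $v_\xi'(\pm1)=1$ (which gives $\pa_t\log h_t'(\pm1)=1-a_t$) are all right; this is the Lawler--Schramm--Werner style computation from the chordal restriction property transplanted to the dipolar normalization. (One point of hygiene: $h_t'(\xi_t)$, $h_t(\xi_t)$ are themselves semimartingales; what the differentiated flow equation supplies are the partial $t$-derivatives at fixed $z$, which is indeed how your drift assembly uses them.) The paper instead writes $M_t=(1-\xi_t^2)^\lambda F(\xi_t,t)$ with $F(z,t)=\E(\Psi^{\eff}_{\Omega_t}(z)\,\|\,\id_{\Omega_t})$, expresses $\dot F$ as a Lie derivative along $v=\dot h_t\circ h_t^{-1}$, converts the singular piece of that Lie derivative into an insertion of $T$ via Ward's equation, transports $T$ back to $\Omega_t$ as a Schwarzian form of order $c/12$ --- which is exactly where the term $\frac{c}{24}(1-\xi_t^2)^2S_{h_t}(\xi_t)$ is born --- and cancels everything else using $L_{-1}\Psi=\pa\Psi$, $L_0\Psi=\lambda\Psi$, $L_1\Psi=0$ and the level-two degeneracy equation. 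What your approach buys is independence from the CFT apparatus; what it costs is that the collapse of all non-Schwarzian terms, which in the paper is the null-vector equation in disguise, must be checked by a third-order expansion at $\xi_t$, and you have left precisely that verification (correctly identified as the crux) undone. Since both computations evaluate the drift of the same process from the same flow equation, the paper's argument guarantees that your remainder vanishes for every $\kappa$, but a complete write-up along your lines still owes that algebra.
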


\begin{proof}
Let
$F(z,t):=\E(\Psi_{\Omega_t}^{\eff}(z) \,\|\,\id_{\Omega_t}).$
Then $M_t=(1-\xi_t^2)^{\lambda}F(\xi_t,t).$
Application of It\^o's formula to the smooth function $F$ gives the drift term of ${\dd M_t}/{M_t}:$
$$\frac{\dot{F}(\xi_t,t)}{F(\xi_t,t)}-\frac{\kappa}{4}(1+2\lambda)(1-\xi_t^2)\xi_t\frac{F'(\xi_t,t)}{F(\xi_t,t)}+\frac{\kappa}{8}(1-\xi_t^2)^2\frac{F''(\xi_t,t)}{F(\xi_t,t)}+\frac{\kappa}{2}(\lambda^2 +\frac{\lambda}{2})\xi_t^2 -\frac{\kappa}{4}\lambda.$$
We represent
$$\dot{F}(z,t)=\frac{\dd}{\dd s}\Big|_{s=0}\E(\Psi^{\eff}_{\mathbb{H}}\,\|\,h^{-1}_{t+s})(z)=\frac{\dd}{\dd s}\Big|_{s=0}\E(\Psi^{\eff}_{\mathbb{H}}\,\|\,h^{-1}_{t}\circ f_{s,t}^{-1})(z), \quad(f_{s,t}=h_{t+s} \circ h_t^{-1})$$
in terms of Lie derivative
\begin{equation} \label{eq: Fdot}
\dot{F}(z,t)=\E(\LL(v,\overline{\mathbb{H}})\Psi_{\mathbb{H}}^{\eff}\,\|\,h_t^{-1})(z),\qquad
(v\,\|\,\id_{\mathbb{H}})=\frac{\dd}{\dd s}\Big|_{s=0} f_{s,t}=\dot{h}_t \circ h_t^{-1},
\end{equation}
where the Lie derivative operator $\LL(v,\overline{\mathbb{H}})$ applies to the points $\pm1.$
To compute the vector field $v$, we apply the chain rule to $h_t=\wt{g}_t\circ \psi_K \circ g_t^{-1}$ and compute the capacity changes.
Indeed,
$$\dot{h}_t (z)=-\Big(\frac{(1-\xi_t^2)h'_t(\xi_t)}{1-h_t(\xi_t)^2}\Big)^2 v_{h_t(\xi_t)}(h_t(z))+h'_t (z)v_{\xi_t}(z), \quad  v_{\xi}(z):= \frac{1-z^2}2\frac{1-\xi z}{\xi-z}.$$
Thus
\begin{equation} \label{eq: v}
(v\,\|\,\id_{\mathbb{H}})(\zeta)=-\Big(\frac{(1-\xi_t^2)h'_t(\xi_t)}{1-h_t(\xi_t)^2}\Big)^2  v_{h(\xi_t)}(\zeta)+h'_t (h_t^{-1}(\zeta))\,v_{\xi_t}(h_t^{-1}(\zeta)).
\end{equation}
By \eqref{eq: Fdot} and \eqref{eq: v}, we have
\begin{align*}
\dot{F}(z,t)&=-\Big(\frac{(1-\xi_t^2)h'_t(\xi_t)}{1-h_t(\xi_t)^2}\Big)^2 h'_t(z)^{\lambda}\,\E(\LL(v_{h_t(\xi_t)} ,\overline{\mathbb{H}})\Psi_\mathbb{H}^{\eff}(h_t (z))\,\|\,\id_{\mathbb{H}})\\
&+\E(\LL(v_{\xi_t},\Omega_t\sm\{\pm 1\})\Psi_{\Omega_t}^\eff\,\|\,\id_{\Omega_t})(z)+2\mu\,\E(\Psi_{\Omega_t}^\eff\,\|\,\id_{\Omega_t})(z),
\end{align*}
where the Lie derivative operator $\LL(v_{\xi_t},\Omega_t\sm\{\pm 1\})$ does not apply to the points $\pm1.$
It follows from Ward's equation that
\begin{align*}
\dot{F}(z,t)&=-\frac{(1-\xi_t^2)^2}{2}h'_t(\xi_t)^2h'_t(z)^{\lambda}\E(T_{\mathbb{H}}(h_t(\xi_t))\Psi_\mathbb{H}^\eff(h_t(z))\,\|\,\id_{\mathbb{H}})\\&+\E(\LL(v_{\xi_t},\Omega_t\sm\{\pm 1\})\Psi_{\Omega_t}^\eff\,\|\,\id_{\Omega_t})(z)+2\mu\,\E(\Psi_{\Omega_t}^\eff\,\|\,\id_{\Omega_t})(z).
\end{align*}
By conformal invariance,
\begin{align*}
\dot{F}(z,t)&=-\frac{(1-\xi_t^2)^2}{2}\E(T_{\Omega_t}(\xi_t)\Psi_{\Omega_t}^\eff(z)\,\|\,\id_{\Omega_t})+\frac{c}{24}(1-\xi_t^2)^2S_{h_t}(\xi_t)\E(\Psi_{\Omega_t}^\eff\,\|\,\id_{\Omega_t})(z)\\
&+\E(\LL(v_{\xi_t},\Omega_t\sm\{\pm 1\})\Psi_{\Omega_t}^\eff\,\|\,\id_{\Omega_t})(z)+2\mu\,\E(\Psi_{\Omega_t}^\eff\,\|\,\id_{\Omega_t})(z).
\end{align*}
Using \eqref{eq: sing OPE} and the fact that $L_{-1}\Psi=\pa\Psi, L_0\Psi=\lambda\Psi, L_1\Psi=0,$ we have
\begin{align*}
\frac{\dot{F}(\xi_t,t)}{F(\xi_t,t)}&=-\frac{(1-\xi_t^2)^2}{2}\frac{(\E\,T_{\Omega_t}*\oneleg^\eff_{\Omega_t}(\xi_t)\,\|\,\id)}{(\E\,\oneleg^\eff_{\Omega_t}(\xi_t)\,\|\,\id)}+\frac{3\xi_t (1-\xi_t^2)}{2}\frac{F'(\xi_t,t)}{F(\xi_t,t)}\\&-(\frac{3\xi_t^2-1}{2}\lambda-2\mu)+\frac{c}{24}(1-\xi_t^2)^2 S_{h_t}(\xi_t).
\end{align*}
Plugging the above equation into the drift term of $\dd M_t,$ lemma now follows from the level two degeneracy equation for $\Psi^\eff.$
\end{proof}

 From now on, we use the $(\mathbb{S},-\infty,\infty)$-uniformization.
By abuse of notation, let $\xi_t=\sqrt\kappa B_t$ (cf. $\xi_t$ in Subsection~\ref{ss: MOs}) and let $g_t$ be the dipolar SLE map from $(D_t ,\gamma_t,Q)$ onto $(\mathbb{S},\xi_t,\R+\pi i).$
As before, for a compact hull $K$ with $K \cap (\R + \pi i) = \emptyset,$  we denote $\Omega_t =g_t(D_t\sm K),\wt{\gamma}=\psi_K \circ \gamma$ and define a conformal map $h_t:\Omega_t\to\mathbb{S}$ by
$$h_t:=\wt{g}_t\circ \psi_K \circ g^{-1}_t,$$
where $\wt{g}_t$ is a dipolar Loewner map from $(\mathbb{S} \sm \wt{\gamma}[0,t],-\infty,\infty)$ onto $(\mathbb{S},-\infty,\infty)$ and $\psi_K$ is the conformal transformation from $(\mathbb{S} \sm K ,-\infty,\infty)$ onto $(\mathbb{S},-\infty,\infty)$ such that
$$\lim_{z\to\pm\infty} \psi_K(z) - z = \pm \,\, \scap(K).$$
Then the process~\eqref{eq: M} in the $(\mathbb{H},-1,1)$-uniformization becomes
$$M_t = (h_t'(\xi_t))^\lambda\,e^{-2\mu\,\scap(K_t)}, \quad K_t = \overline{\mathbb{H}\sm\Omega_t}$$
in the $(\mathbb{S},-\infty,\infty)$-uniformization.

\begin{proof}[Proof of Theorem~\ref{restriction}]
By Lemma~\ref{restriction1}, the process $M_t = (h_t'(\xi_t))^\lambda\,e^{-2\mu\,\scap(K_t)}$ is a local martingale if $\kappa =8/3.$
We first claim that the process $M_t$ is a bounded continuous martingale.
Since $\scap(K)\ge0$ for a compact hull $K$ with $K \cap (\R + \pi i) = \emptyset,$ it suffices to show that $(0<) \psi'_K(x) \le 1$ for $x\in\R\sm K.$
As in the chordal case, $\Im\, \psi_K(z) - \Im\, z$ is a bounded harmonic function with non-positive boundary values.
Thus $\Im\, \psi_K(z) \le\Im\, z$ for $z\in \mathbb{S}\sm K$ and $\psi'_K(x) \le 1$ for $x\in\R\sm K.$

 Let $T=\inf\{t\ge 0\,:\, \gamma(0,t]\cap K \ne \emptyset \}.$
It follows from the martingale convergence theorem that $\lim_{t\to T} M_t$ exists a.s.
The proof that $\lim_{t\to T} M_t = 1_{T=\infty}$ a.s. is similar to that in the chordal case (see \cite[Theorem~6.1]{LSW03}).
We leave it as an exercise for the reader.
By the optional stopping theorem,
$$\psi'_K(0)^\lambda e^{-2\mu\,\scap(K)}=M_0 = \E\,M_T = \mathbb{P}\{T=\infty\}.$$
This proves the theorem.
\end{proof}

 We now prove Friedrich-Werner's formula in the dipolar case.

\begin{proof}[Proof of Theorem~\ref{FW}]
Denote $\bfs{x} = (x_1,\cdots,x_n),$ and
$$R(\xi;\bfs{x}) = \E\,[\,\Psi^\eff(\xi)\,T(x_1)\cdots T(x_n)\,].$$
The non-random field $R\equiv R(\xi;\bfs{x})$ has the following properties:

\smallskip\quad(R1) it is a boundary differential of dimension $\lambda=5/8$ with respect to $\xi;$

\smallskip\quad(R2) it is a boundary differential of dimension $2$ with respect to $x_j;$

\smallskip\quad(R3) it is a boundary differential of dimension $\mu= 5/96$ with respect to $q_\pm.$

\smallskip We apply Ward's equations to the function $R(\xi;\bfs{x})$ so that we replace $T(x)$ in $R(\xi;x,\bfs{x})$ by the Lie derivative operator:
\begin{equation} \label{eq: recursion4T}
R(\xi;x,\bfs{x})=(\LL({v_x},\bar{\mathbb{S}})+\mu)\,R(\xi;\bfs{x}),\quad (\textrm{in }\id_{\bar{\mathbb{S}}}),
\end{equation}
where $v_x(\zeta) = \frac12\coth_2(x-\zeta)$ and the Lie derivative operator $\LL({v_x},\bar{\mathbb{S}})$ do not apply to the points $\pm\infty.$

 Let
$$U(\bfs{x}) = \lim_{\ve\to 0} \ve^{-2n} \mathbb{P} (\SLE_{8/3} \textrm{ hits all slits } [x_j,x_j+i\ve\sqrt2] )$$
(if the limit exists).
We define the non-random field $T(\xi;\bfs{x})$ as follows:
{\setlength{\leftmargini}{1.7em}
\begin{itemize}
 \item it satisfies the transformation laws (R1)~--~(R3);
 \item $(T(\xi;x_1,\cdots,x_n)\,\|\,\id_{\bar{\mathbb{S}}}) = U(x_1-\xi,\cdots,x_n-\xi).$
\end{itemize}}

 We now claim that the limit $U(x,\bfs{x})$ exists under the assumption of existence of the limit $U(\bfs{x})$ and that
\begin{equation} \label{eq: FW0}
T(0;x,\bfs{x}) = (\LL({v_x},\bar{\mathbb{S}}) + \mu)\,T(0;\bfs{x}).
\end{equation}
The non-random fields $T(0;\cdot)$ and $R(0;\cdot)$ satisfy the same recursive equation  (see \eqref{eq: recursion4T} and \eqref{eq: FW0}).
Thus $T(0;\cdot) = R(0;\cdot) $ for all $n\ge 1$ since $T(0;\cdot) = R(0;\cdot) = 1$ for $n=0.$
Therefore, we have $U(\bfs{x}) = R(0;\bfs{x}).$

To verify this claim, we write $\mathbb{P}(\bfs{x})$ for the probability that dipolar $\SLE_{8/3}$ path hits all segments $[x_j,x_j+i\ve\sqrt2]$ and $\mathbb{P}(\bfs{x}\,|\,\neg x)$ for the same probability conditioned on the event that the path avoids $[x,x+i\ve\sqrt2].$
By the induction hypothesis,
\begin{equation} \label{eq: FW1}
\mathbb{P}(\bfs{x})\approx \ve^{2n} \, T(0;\bfs{x}).
\end{equation}
On the other hand, it follows from the restriction property of dipolar $\SLE_{8/3}$ that
\begin{equation} \label{eq: FW2}
1-\mathbb{P}(x) =(\psi'(0))^\lambda \, (e^{-\scap([x,x+i\ve\sqrt2])})^{2\mu}
\end{equation}
and
\begin{equation} \label{eq: FW3}
\mathbb{P}(\bfs{x}\,|\,\neg x)\approx \ve^{2n} \, T(\psi(0);\psi(x_1),\cdots, \psi(x_n))\prod_{j=1}^n\psi'(x_j)^2 , \end{equation}
where $\psi$ is a slit map from $(\mathbb{S}\sm [x,x+i\ve\sqrt2],0,\pm\infty)$ onto $(\mathbb{S},0,\pm\infty).$
This $\psi$ satisfies
$$\psi(z) = \varphi_t(z-x) - \varphi_t(-x), \quad \cosh_2 \varphi_t(z)= e^{\frac12t}\cosh_2z, \quad e^t = 1+ \tan^2(\ve/\sqrt2),$$
where $\cosh_2 z = \cosh(\frac12z).$
By \eqref{eq: FW1}~--~\eqref{eq: FW3},
we approximate $\ve^{-2n}\mathbb{P}(x,\bfs{x})$ by
$$T(0;\bfs{x})-\psi'(0)^\lambda(e^{-\scap([x,x+i\ve\sqrt2])})^{2\mu} \,T(\psi(0);\psi(x_1),\cdots, \psi(x_n))\prod_{j=1}^n\psi'(x_j)^2.$$
Thus the limit $U(x,\bfs{x})$ exists.
Since $\scap([x,x+i\ve\sqrt2]) = \log(1+\tan_2^2\sqrt2\ve)\approx \frac12\ve^2,$ we have
$$T(0;x,\bfs{x}) = (\LL(v_x,\bar{\mathbb{S}})+\mu)\,T(0;\bfs{x}).$$
\end{proof}

%%%%%%%%%%%%%%%%%%%%%%%%%%%%%%%%%%%%%%%%%%%%%%%%%%%%%%%%%%%%%%%%%%%
\bs\section{Vertex observables}
%%%%%%%%%%%%%%%%%%%%%%%%%%%%%%%%%%%%%%%%%%%%%%%%%%%%%%%%%%%%%%%%%%%
We expand our OPE family $\FF_{(b)}$ of $\Phi_{(b)}$ by considering the rooted multi-vertex fields with the neutrality condition.
In this section we extend Theorem~\ref{main X} to this expanded family.
As examples of screening of rooted vertex observables, we discuss Cardy-Zhan's observables
that describe the probability for a point to be to the left (right) of the dipolar SLE path and the probability for a point to be swallowed by the dipolar SLE hull.

%%%%%%%%%%%%%%%%%%%%%%%%%%%%%%%%%%%%%%%%%%%%%%%%%%%%%%%%%%%%%%%%%%%
\subsection{Rooted Multi-vertex fields}
We apply the rooting rules (in Subsection~\ref{ss: O*}) to the multi-vertex fields $\wh\OO^{(\bfs{\sigma},\bfs{\sigma_*})}\wh{\OO}^{(\sigma_-)}\wh{\OO}^{(\sigma_+)}$ and arrive to the definition of rooted multi-vertex fields $\wh{\OO}^{(\bfs{\sigma},\bfs{\sigma_*};\sigma_-,\sigma_+)}:$
$$\wh{\OO}^{(\bfs{\sigma},\bfs{\sigma_*};\sigma_-,\sigma_+)}= \wh{M}^{(\bfs{\sigma},\bfs{\sigma_*};\sigma_-,\sigma_+)}\,e^{\odot i(\sigma_-\Phiplus_{(0)}(q_-)+\sigma_+\Phiplus_{(0)}(q_+)+\sum \sigma_j \Phiplus{(0)}(z_j)-\sigma_{j*}\Phiminus_{(0)}(z_j))},$$
where $\wh{M}^{(\bfs{\sigma},\bfs{\sigma_*};\sigma_-,\sigma_+)}=\E[\wh{\OO}^{(\bfs{\sigma},\bfs{\sigma_*};\sigma_-,\sigma_+)}]=(w'_-)^{\wh{h}_-}(w'_+)^{\wh{h}_+}\prod \wh{M}_j\prod_{j<k}I_{j,k}.$ The interaction term $I_{j,k}$ is the same as \eqref{eq: interaction} and
\begin{align*}
\wh{M}_j=(w'_j)^{h_j}(\overline{w'_j})^{h_{j*}}w_j^{\sigma_ja} \bar w_j^{\sigma_{j*}a} (w_j-\bar{w}_j)^{\sigma_j \sigma_{j*}}(1-w_j)^{\wh{\nu}_+}(1+w_j)^{\wh{\nu}_-}(1-\bar{w}_j)^{\wh{\nu}_{+*}}(1+\bar{w}_j)^{\wh{\nu}_{-*}}
\end{align*}
with the exponents
$\wh{\nu}_\pm=\sigma_j(b-\frac12a+\sigma_\pm),\wh{\nu}_{\pm *}=\sigma_{j*}(b-\frac12a+\sigma_{\pm *}).$
The dimensions $[\bfs{h}, \bfs{h}_*;\wh h_-,\wh h_+]$ of rooted multi-vertex fields $\wh\OO^{(\bfs{\sigma},\bfs{\sigma_*};\sigma_-,\sigma_+)}$ are given by
$$h_j=\frac{\sigma_j^2}{2}-\sigma_j b,\qquad h_{j*}=\frac{\sigma_{j*}^2}{2}-\sigma_{j*} b, \qquad \wh{h}_\pm=\frac{\sigma_\pm^2}{2}-\frac{\sigma_\pm a}{2}.$$

Alternatively, one can define $\wh{\OO}^{(\bfs{\sigma},\bfs{\sigma_*};\sigma_-,\sigma_+)}$ by the action of boundary condition changing operator $\XX\mapsto\wh\XX$ on  $\OO^{(\bfs{\sigma},\bfs{\sigma_*};\sigma_-,\sigma_+)}.$
Indeed, the boundary condition changing operator $\XX\mapsto\wh\XX$ can be extended to formal fields/functionals by the formula
$$\Phiplus_{(0)} \mapsto \Phiplus_{(0)}  -\frac{ia}{2}\log\frac{w^2}{1-w^2}, \qquad
\Phiplus_{(0)}(q_\pm) \mapsto \Phiplus_{(0)}(q_\pm)+\frac{ia}{2}\log w'_\pm$$
and the property that it commutes with complex conjugation.
Note that the interaction terms are preserved under the boundary condition changing operator.
For the rooted multi-vertex field $\OO\equiv\OO^{(\bfs{\sigma},\bfs{\sigma_*};\sigma_-,\sigma_+)},$
$$\wh\OO =
(w'_-)^{-\frac12\sigma_-a}(w'_+)^{-\frac12\sigma_+a}\prod w_j^{\sigma_j a}(1-w_j^2)^{-\frac12\sigma_j a}\bar{w}_j^{\sigma_{j*}a}(1-\bar{w}_j^2)^{-\frac12\sigma_{j*}a}\,\OO.$$
Thus two definitions coincide.

For rooted multi vertex fields $\OO\equiv\OO^{(\bfs{\sigma},\bfs{\sigma_*};\sigma_-,\sigma_+)}$ with the neutrality condition, let us denote
$$
\wh{\E}\, [\OO]:=\frac{\E[\Psi(p)\star\OO]}{\E\Psi (p)},\qquad \wh{\E}_\zeta[\OO]:=\frac{\E[\Psi(\zeta)\star\OO]}{\E\Psi (\zeta)}, \quad(\zeta \in \bar{D}\sm\{q_\pm\}).$$
Then Equation~\eqref{eq: hat} can be extended to the rooted multi-vertex fields:
$$\wh{\E}\,[\OO]=\E\,[\wh{\OO}].$$

Proposition~\ref{Cardy} (BPZ-Cardy equations) extends to the rooted multi-vertex fields.

\begin{prop}\label{Cardy4O}
Suppose that the parameters $a$ and $b$ are related as $2a(a+b)=1.$ Then for rooted multi-vertex fields $\OO\equiv \OO^{(\bfs{\sigma},\bfs{\sigma_*};\sigma_-,\sigma_+)}$ with the neutrality condition,
\begin{equation}
\wh{\E}_{\xi}[\LL_{v_{\xi}}\OO]+(\wh{h}_-\!+\wh{h}_+)\wh{\E}_{\xi}[\OO]=\frac{1}{2a^2}\Big(\frac{(1-\xi^2)^2}{2}\pa^2_{\xi} -\xi(1-\xi^2)\pa_{\xi}\Big)\wh{\E}_{\xi} [\OO],
\end{equation}
where all fields are evaluated in the identity chart of the upper half-plane and $\pa_\xi = \pa + \bp.$ The vector field $v_\xi$ is given by
$$v_{\xi}(z):= \frac{1-z^2}2\frac{1-\xi z}{\xi-z}.$$
\end{prop}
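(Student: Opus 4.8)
The plan is to mirror the proof of Proposition~\ref{Cardy} (the BPZ--Cardy equation for tensor products of fields in $\FF_{(b)}$), but now carrying the rooting data through the computation. The essential difference is that the rooted multi-vertex field $\OO$ carries boundary conformal dimensions $\wh h_\pm$ at the marked points $q_\pm,$ and these contribute an extra term that was absent in the unrooted case. My starting point is the fact (established just before the statement) that $\wh{\E}_\xi[\OO] = \E[\wh\OO]$ and, more usefully, that $\wh{\E}_\xi[\OO]$ can be realized as a correlation $\E[(1-\xi^2)^h\Psi(\xi)\star\OO]$ up to the normalization by $\E\,\Psi(\xi).$

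First I would set, for $\zeta\in\mathbb{H},$ the off-boundary correlation $R_\zeta := \E[(1-\zeta^2)^h\,\Psi(\zeta)\star\OO],$ exactly as in the proof of Proposition~\ref{Cardy}, with $h=\frac12a^2-ab.$ The key input is Proposition~\ref{Ward4O}, which is the generalization of Ward's equation to a $\star$-product $V\star\OO$ of a $1$-point rooted vertex field $V$ (here $V=\Psi$) with a rooted multi-vertex field $\OO.$ Applying Proposition~\ref{Ward4O} with $V=\Psi,$ using that $\Psi$ is holomorphic (so $\LL^-_{v_{\bar\zeta}}\Psi=0$), that $L_{-1}\Psi=\pa\Psi$ and $L_0\Psi = h\Psi,$ and invoking the level two degeneracy $T*\Psi = \frac{1}{2a^2}\pa^2\Psi$ from Proposition~\ref{level2}, I obtain an identity expressing $\E[\Psi(\zeta)\star(\LL^+_{v_\zeta}\OO + \LL^-_{v_{\bar\zeta}}\OO)]$ in terms of $\pa^2 R_\zeta,$ $\pa R_\zeta,$ and $R_\zeta.$ The crucial new feature is that the coefficient of the zeroth-order term now carries the boundary dimensions: Proposition~\ref{Ward4O} produces a term $-(h_-(\Psi\star\OO)+h_+(\Psi\star\OO))$ in addition to the $b^2$ and $\frac{3\zeta^2-1}{2}h$ contributions seen in the unrooted case.

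Next I would exploit the numerology $2a(a+b)=1$ to collapse the constant coefficients, just as in Proposition~\ref{Cardy}, reducing the right-hand side to $\frac{1}{2a^2}\big(\tfrac{(1-\zeta^2)^2}{2}\pa^2 - \zeta(1-\zeta^2)\pa\big)R_\zeta.$ The boundary-dimension term, however, does not get absorbed: since $h_\pm(\Psi\star\OO) = h_\pm(\Psi) + \wh h_\pm(\OO)$ under the $\star$-product (the boundary charges at $q_\pm$ simply add), and since the $\Psi$-contribution $h_\pm(\Psi)$ is exactly the piece already accounted for in the definition of $\wh{\E}_\xi,$ the residual term surviving on the left-hand side is precisely $(\wh h_- + \wh h_+)\,\wh{\E}_\xi[\OO].$ Finally I take the limit $\zeta\to\xi$ with $\xi\in\R.$ As in Proposition~\ref{Cardy}, the left-hand side converges to $\wh{\E}_\xi[\LL_{v_\xi}\OO]$ (the two Lie derivatives $\LL^+_{v_\zeta}$ and $\LL^-_{v_{\bar\zeta}}$ merge into the full real Lie derivative $\LL_{v_\xi}$ on the boundary), and because $\Psi$ is holomorphic the derivatives $\pa R_\zeta,\pa^2 R_\zeta$ pass to $\pa_\xi R_\xi,\pa_\xi^2 R_\xi$ with $\pa_\xi = \pa+\bp.$

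I expect the main obstacle to be bookkeeping the boundary dimensions correctly through the $\star$-product and the rooting limit, in particular verifying that $h_\pm(\Psi\star\OO)=h_\pm(\Psi)+\wh h_\pm(\OO)$ and that $h_\pm(\Psi)$ cancels against the normalization $(1-\xi^2)^h\Psi(\xi)/\E\,\Psi(\xi)$ so that exactly $\wh h_\pm(\OO)$ remains. One must also be careful that the Lie derivative operators $\LL_{v_\xi}$ do not act at the rooting points $q_\pm$ (as emphasized in the Remark after Proposition~\ref{Ward4O}), which is what makes the boundary-dimension term appear as an explicit additive contribution rather than being generated dynamically by $\LL_{v_\xi}.$ Once this accounting is pinned down, the analytic passage to the boundary is routine and identical to Proposition~\ref{Cardy}.
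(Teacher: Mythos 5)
Your proposal is correct and follows essentially the same route as the paper: the same normalization $R_\zeta=\E[(1-\zeta^2)^h\Psi(\zeta)\star\OO],$ the same application of Proposition~\ref{Ward4O} with the level two degeneracy and $L_{-1}\Psi=\pa\Psi,$ the same additivity $h_\pm(\Psi\star\OO)=h_\pm(\Psi)+\wh h_\pm=\frac18a^2+\wh h_\pm$ with the $\frac18a^2$ pieces absorbed by the numerology $2a(a+b)=1$ and the $\wh h_\pm$ surviving as the extra term, and the same boundary limit $\zeta\to\xi.$ No gaps.
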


\begin{proof}
For $\zeta \in \overline{\mathbb{H}}\sm \{\pm 1\},$ let us denote
$$R_{\zeta}\equiv R(\zeta,z_1, \cdots ,z_n)=\E[(1-\zeta^2)^h \Psi(\zeta)\star \OO],$$
where $h=\frac12a^2-ab.$
Then it follows from Ward's equation (Proposition~\ref{Ward4O}), $L_{-1}\Psi=\pa\Psi,$ and the level two degeneracy equation for $\Psi$ (Proposition~\ref{level2}) that
\begin{align*}
\E[\Psi \star (\LL^+_{v_\zeta}\OO +\LL^-_{v_{\overline{\zeta}}} \OO)]&=\frac1{2a^2}\frac{(1-\zeta^2)^2}{2}\E[(\pa^2\Psi)\star\OO]-\frac{3\zeta(1-\zeta^2)}{2}\E[(\pa \Psi)\star\OO]\\&+(\frac{3\zeta^2-1}{2}h+b^2-h_-(\Psi\star\OO)-h_+(\Psi\star\OO))\E[\Psi \star \OO],
\end{align*}
where $h_\pm(\Psi\star\OO)$ is the dimension of boundary differential $\Psi\star\OO$ with respect to $q_\pm.$
(We also use the holomorphicity of $\Psi,$ and therefore $\LL_v^-\Psi(\zeta)=0.$)
By the numerology $2a(a+b)=1$ and the relation $h_\pm(\Psi \star \OO)=h_\pm(\Psi)+h_\pm(\OO)=\frac18a^2+\wh{h}_\pm,$ we have
\begin{align*}
\E[(1-\zeta^2)^h\Psi &\star (\LL^+_{v_\zeta}\OO +\LL^-_{v_{\overline{\zeta}}} \OO)]+(\wh{h}_-\!+\wh{h}_+)\E[(1-\zeta^2)^h\Psi \star \OO]\\&=\frac{1}{2a^2}\Big(\frac{(1-\zeta^2)^2}{2}\pa^2-\zeta(1-\zeta^2)\pa\Big)R_\zeta,
\end{align*}
where $\pa$ is the operator of differentiation with respect to the complex variable $\zeta.$
Taking the limits of both sides as $\zeta \to \xi,$ we obtain BPZ-Cardy equations.
\end{proof}

Now we prove Theorem~\ref{main O}.
\begin{proof}[Proof of Theorem~\ref{main O}]
Denote
$$R_{\xi}=\wh{\E}_{\xi}[\OO^{(\bfs{\sigma},\bfs{\sigma_*};\sigma_-,\sigma_+)}].$$
By conformal invariance, the process $M_t \equiv M_{(D_t, \gamma_t, Q)}$ is represented by
$$M_t =m(\xi_t ,t),\,\,m(\xi, t)=(R_{\xi} \,\|\, g_t^{-1}),$$
where $g_t$ is the dipolar SLE map from $(D_t,\gamma_t,q_\pm)$ onto $(\mathbb{H},\xi_t,\pm1).$
Since $g'_t(q_\pm)=e^{-t}$, the drift term of $\dd M_t$ is equal to
$$\frac{1}{2a^2}\Big(\frac{(1-\xi^2)^2}{2}\pa^2_{\xi}-\xi(1-\xi^2)\pa_{\xi}\Big)\Big|_{\xi=\xi_t}m(\xi ,t)\dd t
-(\LL_{v_{\xi_t}}R_t \,\|\,g_t^{-1})\dd t -(\wh{h}_-\!+\wh{h}_+)M_t \dd t,$$
where the Lie derivative operator $\LL_{v_{\xi_t}}$ does not apply the marked boundary points $q_\pm.$
It follows from Proposition~\ref{Cardy4O} (BPZ-Cardy equations) that $\dd M_t$ is driftless.
\end{proof}

%%%%%%%%%%%%%%%%%%%%%%%%%%%%%%%%%%%%%%%%%%%%%%%%%%%%%%%%%%%%%%%%%%%
\subsection{Cardy-Zhan's observables}
Let $\kappa>4, z\in D.$
We consider the following geometric observables
$$N(z)=\mathbb{P}(\tau_z <\infty), \quad \mathbb{P}(z \textrm { is to the left of }\gamma), \quad \mathbb{P}(z \textrm { is to the right of }\gamma).$$
They are real-valued with all conformal dimensions zero.
There is no such vertex observable except for the constant field.
However, the derivative $\pa N$ can be identified as a vertex field with conformal dimensions
$$\lambda_z =1, \quad \lambda_{z*}=\lambda_{q+}=\lambda_{q-}=0.$$
Indeed, by dimension calculus, a vertex field $\OO^{(-2a,0;a;a)}$ satisfies the above requirements.
Let $M$ be a martingale observables with all conformal dimensions zero such that
$$\pa M(z) = \E [\OO^{(-2a,0;a;a)}] =  w'  \sinh^{-4/\kappa}(\frac w2)\quad\textrm{up to multiplicative constant},$$
where $w$ is the conformal map from $(D,p,q_\pm)$ onto $(\mathbb{S},0,\pm\infty).$
In $\mathbb{S},$ let us choose $M$ satisfying the normalization $M(-\infty)=0,\,M(\infty)=1,$ and $ \Im\, M(0)>0.$
It follows from Schwarz-Christoffel formula that $M$ is the conformal transformation from $(\mathbb{S},0,\pm \infty)$ onto the isosceles triangle with angles $2\pi/\kappa$ at $M(-\infty)=0,\,M(\infty)=1.$
Applying the optional stopping theorem and using the fact that
$$
\begin{cases}
M_{\tau_z}=M(0) &\textrm{ if }\tau_z <\infty, \\
M_{\tau_z}=M(-\infty)=0 \, &\textrm{ if } z\textrm{ is to the left of } \gamma,\\
M_{\tau_z}=M(\infty)=1 \, &\textrm{ if } z\textrm{ is to the right of } \gamma,
\end{cases}
$$
we justify Cardy-Zhan's formulas (see \cite[Corollary~2.3.1]{Zhan04})
\begin{align*}
\mathbb{P}(\tau_z <\infty)=\frac{\Im\, M(z)}{\Im\, M(0)}, \quad \mathbb{P}(z\textrm{ is to the right of } \gamma)=\Re\, M(z)-\frac{1}{2}\frac{\Im\, M(z)}{\Im\, M(0)}.
\end{align*}
\noindent \textit{Remark.}
If $z = x+ \pi i\in\R + \pi i,$ then Cardy-Zhan's observables
$$M(x+\pi i) = \frac{~\displaystyle\int_{-\infty}^x \cosh^{-4/\kappa}(\frac\xi2)\,\dd \xi~}{~\displaystyle\int_{-\infty}^\infty \cosh^{-4/\kappa}(\frac\xi2)\,\dd \xi~}$$
give the distribution of the endpoint $\gamma(\infty)$ of dipolar $\SLE_\kappa\, (\kappa>0)$ path.

%%%%%%%%%%%%%%%%%%%%%%%%%%%%%%%%%%%%%%%%%%%%%%%%%%%%%%%%%%%%%%%%%%%
\renewcommand\thesection{Appendix.}
\section{Basic properties of conformal Fock space fields} 
\setcounter{section}{7}
\renewcommand\thesection{A}
\setcounter{subsection}{0}
\setcounter{equation}{0}
\setcounter{theorem}{0}
Here, we include some definitions and concepts of conformal Fock space fields developed in \cite{KM11} so that this article can be read as a self-contained one.

%%%%%%%%%%%%%%%%%%%%%%%%%%%%%%%%%%%%%%%%%%%%%%%%%%%%%%%%%%%%%%%%%%%
\subsection{Fock space correlation functionals}
This subsection is borrowed from \cite[Section~1.3]{KM11}.
By definition, \emph{basic} correlation functionals are formal expressions of the type (Wick's product of $X_j(z_j)$)
$$\XX = X_1(z_1)\odot\cdots\odot X_n(z_n),$$
where points $z_j\in D$ are not necessarily distinct and $X_j$'s are derivatives of the Gaussian free field, (i.e., $X_j=\pa^j\bp^k\Phi$), or Wick's exponentials
$$e^{\odot \alpha\Phi}=\sum_{n=0}^\infty \frac{\alpha^n}{n!}\Phi^{\odot n}.$$
The constant $1$ is also included to the list of basic functionals.
We write $S_\XX$ for the set of all points $z_j$ (the \emph{nodes} of $\XX$) in the expression of $\XX.$

For derivatives $X_{jk}$ of the Gaussian free field and basic functionals of the form
$$\XX_j = X_{j1}(z_{j1}) \odot \cdots \odot X_{jn_j}(z_{jn_j}),$$
we define the tensor product $\XX_1\cdots \XX_m$ by
\begin{equation} \label{eq: Wick's rule}
\XX_1\cdots \XX_m = \sum\prod_{\{v,v'\}} \E[X_{v}(z_{v})X_{v'}(z_{v'})] \underset{v''}{\textstyle\bigodot} X_{v''}(z_{v''}),
\end{equation}
where the sum is taken over Feynman diagrams with vertices $v$ labeled by functionals $X_{jk}$ such that there are no contractions of vertices with the same $j,$ and the Wick's product is taken over unpaired vertices $v''.$
By definition, $\E[X_{v}(z_{v})X_{v'}(z_{v'})]$ in \eqref{eq: Wick's rule} are given by the 2-point functions of derivatives of the Gaussian free field, e.g.,
$$\E[\pa^j\Phi(\zeta)\pa^k\Phi(z)] = \pa_\zeta^j \pa_z^k\E[\Phi(\zeta)\Phi(z)] = 2 \pa_\zeta^j \pa_z^k G(\zeta,z).$$
For example, the Feynman diagram with two edges $\{1,4\},\{3,5\}$ and two unpaired vertices $2,6$
corresponds to
\begin{gather*}
\contraction{}{(\Phi}{(z_1) \odot \Phi(z_2) \odot \Phi(z_3))(}{\Phi} \contraction[2ex]{(\Phi(z_1) \odot \Phi(z_2) \odot }{\Phi}{(z_3))(\Phi(z_4) \odot}{\Phi} (\Phi(z_1) \odot \Phi(z_2) \odot \Phi(z_3))(\Phi(z_4) \odot \Phi(z_5) \odot \Phi(z_6))\\
 := \E[\Phi(z_1)\Phi(z_4)] \E[\Phi(z_3)\Phi(z_5)] \Phi(z_2)\odot \Phi(z_6)
\end{gather*}
The definition of tensor product can be extended to general correlation functionals by linearity.
The tensor product of correlation functionals is commutative and associative, see \cite[Proposition~1.1]{KM11}.

We define the correlation of $\E[\XX]$ of $\XX$ by linearity, $\E[1] = 1,$ and
$$\E[X_1(z_1)\odot\cdots\odot X_n(z_n)] = 0,$$
where $X_j$ are derivatives of  $\Phi.$
For example, $\E[e^{\odot\alpha\Phi(z)}] = 1$ and
$$\E[\Phi(z_1)\cdots \Phi(z_n)] = \sum\prod_k 2G(z_{i_k},z_{j_k}),$$
where the sum is over all partitions of the set $\{1,\cdots,n\}$ into disjoint pairs $\{i_k,j_k\}.$

If $\E[\XX_1\YY] =\E[\XX_2\YY]$ holds for all $\YY$ with nodes outside $S_{\XX_1}\cup S_{\XX_2},$ we identify $\XX_1$ with  $\XX_2$ and write $\XX_1\approx \XX_2.$
We consider Fock space functionals modulo an ideal $\NN=\{\XX\approx0\}$ of Wick's algebra.
The concept of a correlation functional $\XX$ can be extended to the case when some of the nodes of $\XX$ lie on the boundary.
For example, $e^{\odot\alpha\Phi(z)} = 1$ for $z\in\pa D.$
The complex conjugation $\overline\XX$ of $\XX$ is defined (modulo $\NN$) by the equation
$\E[\overline\XX\YY]=\overline{\E[\XX\YY]}$
for all $\YY$'s of the form $\Phi(z_1)\odot\cdots\odot\Phi(z_n).$
For example, if $J=\pa\Phi$ in the half-plane $\mathbb{H}$ and if $z\in\pa\mathbb{H},$ then
$J(z)$ is purely imaginary, i.e., $\overline{J(z)}=-J(z),$ and $J(z)\odot J(z)$ is real.

%%%%%%%%%%%%%%%%%%%%%%%%%%%%%%%%%%%%%%%%%%%%%%%%%%%%%%%%%%%%%%%%%%%
\subsection{Fock space fields}  \label{ss: F-fields}
This subsection is borrowed from \cite[Section~1.4]{KM11}.
Basic Fock space fields $X_\alpha$ are formal expressions written as Wick's products of derivatives of the Gaussian free field $\Phi$ and Wick's exponential $e^{\odot\alpha\Phi},$ e.g., $1,\, \Phi\odot\Phi\odot \Phi,\, \pa^2 \Phi\odot \bp\Phi,\, \pa\Phi\odot e^{\odot\alpha\Phi},$ etc.
A general Fock space field is a linear combination of basic fields $X_\alpha,$
$$X=\sum_\alpha f_\alpha X_\alpha,$$
where $f_\alpha$'s are arbitrary (smooth) functions in $D.$
If $X_1,\cdots,X_n$ are Fock space fields and $z_1,\cdots,z_n$ are distinct points in $D,$ then
$\XX = X_1(z_1)\cdots X_n(z_n)$
is a correlation functional.

We define the differential operators $\pa$ and $\bp$ on Fock space fields by specifying their action on basic fields so that the action on $\Phi$ is consistent with the definition of $\pa\Phi, \bp\Phi$ and so that
$$\pa (X\odot Y)=(\pa X)\odot Y+ X\odot (\pa Y),\qquad \bp (X\odot Y)=(\bp X)\odot Y+ X\odot (\bp Y).$$
We extend this action to general Fock space fields by linearity and by Leibniz's rule with respect to multiplication by smooth functions.
Then (modulo $\NN$)
$$\E[(\pa X)(z)\YY] = \pa_z \E[X(z)\YY],\qquad (z\not\in S_\YY),$$
for all correlation functionals $\YY.$

By definition, $X$ is \emph{holomorphic} in $D$ if $\bp X\approx 0,$ i.e., all correlation functions $\E[X(\zeta) \YY]$ are holomorphic in $\zeta\in D\sm S_\YY.$
For example, $J=\pa \Phi,$ $X = J\odot J$ are holomorphic fields.

%%%%%%%%%%%%%%%%%%%%%%%%%%%%%%%%%%%%%%%%%%%%%%%%%%%%%%%%%%%%%%%%%%%
\subsection{Operator product expansion} \label{ss: OPE}
This subsection is borrowed from \cite[Sections~3.1~--~3.2]{KM11}.
Operator product expansion (OPE) is the expansion of the tensor product of two fields near diagonal.
For example,
\begin{equation} \label{eq: OPE(Phi,Phi)}
\Phi(\zeta) \Phi(z) = \log \frac{1}{|\zeta-z|^2} + 2c(z) + \Phi^{\odot 2}(z) + o(1) \qquad\textrm{as }\;\zeta\to z,\;\zeta\ne z,
\end{equation}
where $c=\log C$ is the logarithm of conformal radius $C,$ i.e., $c(z) = u(z,z),$  $u(\zeta,z) = G(\zeta,z) + \log|\zeta-z|.$
The meaning of the convergence is the following: the equation
$$\E[\Phi(\zeta) \Phi(z)\XX] = \log \frac{1}{|\zeta-z|^2}\E[\XX] + 2c(z)\E[\XX] + \E[\Phi^{\odot 2}(z)\XX] + o(1)$$
holds for all Fock space correlation functionals $\XX$ in $D$ satisfying $z\notin S_\XX.$
To derive \eqref{eq: OPE(Phi,Phi)} we use Wick's formula
\eqref{eq: Wick's rule},
$$\Phi(\zeta) \Phi(z) = \E[\Phi(\zeta) \Phi(z)] + \Phi(\zeta)\odot\Phi(z)$$
and the relation
$$\E[\Phi(\zeta) \Phi(z)] = 2G(\zeta,z) = \log \frac{1}{|\zeta-z|^2} + 2c(z) + o(1).$$
The convergence of $\Phi(\zeta)\odot\Phi(z)$ to $\Phi^{\odot2}(z)$ means (by definition) that
$$\E[(\Phi(\zeta)\odot\Phi(z))\XX]\to\E[\Phi^{\odot2}(z)\XX]$$
for every $\XX$ such that $z\not\in S_\XX.$

If the field $X$ is \emph{holomorphic} (i.e., all correlation functions $\E[X(\zeta) \YY]$ are holomorphic in $\zeta\in D\sm S_\YY$), then the operator product expansion is then defined as a (formal) Laurent series expansion
\begin{equation} \label{eq: OPE(X,Y)}
X(\zeta)Y(z)= \sum {C_n(z)}{(\zeta-z)^n}, \qquad\zeta\to z.
\end{equation}
Since the function $\zeta\mapsto\E\,X(\zeta)Y(z)\ZZ$ is holomorphic in a punctured neighborhood of $z,$ it has a Laurent series expansion.

There are only finitely many terms in the principle (or \emph{singular}) part of the Laurent series \eqref{eq: OPE(X,Y)}.
We use the notation $\sim$ for the singular part of the operator product expansion,
$$X(\zeta)Y(z)\sim\sum_{n<0} {C_n(z)}{(\zeta-z)^n}.$$
We also write $\Sing_{\zeta\to z}\,X(\zeta)Y(z)$ for $\sum_{n<0} {C_n(z)}{(\zeta-z)^n}.$
It is clear that we can differentiate operator product expansions \eqref{eq: OPE(X,Y)} both in $\zeta$ and $z$; and the differentiation preserves singular parts.
For example,
$$J(\zeta)\Phi(z)\sim -\frac1{\zeta-z}, \qquad J(\zeta)J(z)\sim-\dfrac1{(\zeta-z)^2}.$$
The coefficients in the operator product expansions (e.g., $2c(z) + \Phi^{\odot2}(z)$ in \eqref{eq: OPE(Phi,Phi)}, $C_n(z)$ in \eqref{eq: OPE(X,Y)}) are called \emph{OPE coefficients}.
OPE coefficients of Fock space fields are Fock space fields (as functions of $z$).
In particular, if $X$ is \emph{holomorphic}, then we define the $*_n$ product by
$X *_{n} Y=C_n.$
We write $*$ for $*_{0}$ and call $X*Y$ the \emph{OPE multiplication}, or the \emph{OPE product} of $X$ and $Y.$

Vertex fields are defined as OPE-exponentials of $\Phi:$
$$\VV^\alpha=e^{*\alpha\Phi}=\sum_{n=0}^\infty\frac{\alpha^n}{n!}\Phi^{*n}.$$
They can be expressed in terms of Wick's calculus: $\VV^\alpha=C^{\alpha^2}e^{\odot\alpha\Phi},$ see \cite[Proposition~3.3]{KM11}.
Here, $C=e^c$ is the conformal radius, see \eqref{eq: OPE(Phi,Phi)}.
The Virasoro field is defined as OPE square of $J = \pa\Phi:$
$$T=-\frac12 J* J.$$
Then by Wick's calculus,
$$T=-\frac12~J\odot J+\frac1{12}S,$$
where $S(z) = S(z,z), S(\zeta,z) := -12\partial_\zeta\partial_z u(\zeta,z), $ and $u(\zeta,z) = G(\zeta,z) + \log|\zeta-z|.$
Thus $T$ is a Schwarzian form of order $\frac1{12}.$
In terms of a conformal map $w:D\to\mathbb{H},$ $S = S_w,$ the Schwarzian derivative of $w.$

%%%%%%%%%%%%%%%%%%%%%%%%%%%%%%%%%%%%%%%%%%%%%%%%%%%%%%%%%%%%%%%%%%%
\subsection{Conformal Fock space fields} \label{ss: Lie}
This subsection is borrowed from \cite[Sections~4.2~--~4.4]{KM11}.
We use Lie derivative of a conformal field to define the stress tensor and to state Ward's identities.

A general conformal Fock space field is a linear combination of basic fields $X_\alpha,$
$$X=\sum_\alpha f_\alpha X_\alpha,$$
where $f_\alpha$'s are non-random conformal fields, see Subsection~\ref{ss: SLE MO}.
A non-random conformal field $f$ is said to be \emph{invariant} with respect to some conformal automorphism $\tau$ of $M$ if
$$ (f\,\|\,\phi)=(f\,\|\,\phi\circ \tau^{-1})$$
for all charts $\phi.$
For example, suppose $D$ is a planar domain and let us write $f$ for $(f\,\|\,\id_D).$
Then $f$ is a $\tau$-invariant $[\lambda,\lambda_*]$-differential if
$$f(z) =f(\tau z)~\tau'(z)^\lambda~\overline{\tau'(z)}^{\lambda_*}.$$
It is because $\tau$ is the transition map between the charts $\phi\circ \tau^{-1}$ and $\phi=\id_D.$
By definition, a \emph{random} conformal field (or a family of conformal fields) is $\tau$-\emph{invariant} if all correlations are invariant as non-random conformal fields.

Suppose a non-random smooth vector field $v$ is holomorphic in some open set $U\subset M.$
For a conformal Fock space field $X,$ we define the Lie derivative $\LL_vX$ in $U$ as
$$(\LL_v X\,\|\, \phi) = \frac{\dd}{\dd t}\Big|_{t=0} (X\,\|\, \phi\circ\psi_{-t}),$$
where $\psi_t$ is a local flow of $v,$ and $\phi$ is an arbitrary chart.

Lie derivative of a differential is a differential but Lie derivative of a Schwarzian form is a quadratic differential:
\begin{itemize}
\item $\LL_vX=\left(v\pa+\bar v\bar\pa+\lambda v'+\lambda_*\overline{v'}\right)X$ for a $[\lambda,\lambda_*]$-differential $X;$
\item\ms $\LL_vX=\left(v\pa+v'\right)X +\mu v'$ for a pre-Schwarzian form $X$ of order $\mu;$
\item\ms $\LL_vX=\left(v\pa+2v'\right)X +\mu v'''$ for a Schwarzian form $X$ of order $\mu.$
\end{itemize}
We recall basic properties of Lie derivatives:
\begin{itemize}
\item $\LL_v$ is an $\R$-linear operator on Fock space fields;
\item\ms $\E[\LL_vX]=\LL_v(\E[X]);$
\item\ms $\LL_v (\bar X)=\overline {(\LL_vX)}$;
\item\ms $\LL_v(\pa X)=\pa(\LL_v X)$ and $\LL_v(\bar\pa X)=\bar\pa(\LL_v X);$
\item\ms Leibniz's rule applies to Wick's products, OPE products, and tensor products.
\end{itemize}
We define the $\C$-linear part $\LL_v^+$ and anti-linear part $\LL_v^-$ of the Lie derivative $\LL_v$ by
$$2\LL_v^+ = \LL_v-i\LL_{iv},\qquad 2\LL_v^- = \LL_v+i\LL_{iv}.$$

\subsection{Stress tensor} \label{ss: stress tensor}
This subsection is borrowed from \cite[Sections~5.2~--~5.3]{KM11}.
A Fock space field $X$ in $D$ is said to have a (symmetric) stress tensor $(A,\bar A)$ ($X\in\FF(A,\bar A)$) if $A$ is a holomorphic quadratic differential
and if  Ward's OPE holds for $X,$ i.e., on a given chart $\phi:U\to\phi U,$
$$\Sing_{\zeta\to z}[A(\zeta)X(z)]= (\LL_{k_\zeta}^+X)(z), \quad \Sing_{\zeta\to z}[A(\zeta)\bar X(z)]= (\LL_{k_\zeta}^+\bar X)(z),$$
where the (local) vector field $k_\zeta$ is defined by $(k_\zeta\,\|\,\phi)(\eta) = 1/(\zeta-\eta).$
Ward's family $\FF(A,\bar A)$ is closed under differentiation and OPE multiplication, see \cite[Proposition~5.8]{KM11}.
In the case of differentials or forms, it is enough to verify Ward's OPEs in just one chart.
For example, a $[\lambda, \lambda_*]$-differential $X$ is in  $\FF(A,\bar A)$ if and only if the following operator product expansions hold in every/some chart:
$$
A(\zeta)X(z)\sim\frac {\lambda X(z)}{(\zeta-z)^2}+\frac {\pa X(z)}{\zeta-z},\quad A(\zeta)\bar X(z)\sim \frac {\bar \lambda_*\bar X(z)}{(\zeta-z)^2}+\frac {\pa \bar X(z)}{\zeta-z}.
$$
Let $X$ be a form of order $\mu.$
Then $X\in \FF(A,\bar A)$ if and only if the following operator product expansion holds in every/some chart:
\begin{align*}
A(\zeta)X(z)&\sim \frac{\mu}{(\zeta-z)^2}+\frac {\pa X(z)}{\zeta-z} &\textrm{ for a pre-pre-Schwarzian form }X;\\
A(\zeta)X(z)&\sim \frac{2\mu}{(\zeta-z)^3}+\frac {X(z)}{(\zeta-z)^2}+\frac {\pa X(z)}{\zeta-z} &\textrm{ for a pre-Schwarzian form }X;\\
A(\zeta)X(z)&\sim \frac{6\mu}{(\zeta-z)^4}+ \frac {2X(z)}{(\zeta-z)^2}+\frac {\pa X(z)}{\zeta-z} &\textrm{ for a Schwarzian form }X.
\end{align*}
For example, Gaussian free field $\Phi$ has a stress tensor
$$A=-\frac12 J\odot J,\qquad J = \pa\Phi.$$
This holomorphic quadratic differential $A$ coincides with the Virasoro field $T$ in the upper half-plane uniformization.
While $A$ itself does not belong to $\FF(A,\bar A),$ the Virasoro field $T$ is in $\FF(A,\bar A).$
We review the abstract theory of Virasoro field in the next subsection.

%%%%%%%%%%%%%%%%%%%%%%%%%%%%%%%%%%%%%%%%%%%%%%%%%%%%%%%%%%%%%%%%%%%
\subsection{Virasoro field} \label{ss: Virasoro field}
This subsection is borrowed from \cite[Lecture~7 and Appendix~11]{KM11}.
A Fock space field $T$ is said to be the \emph{Virasoro field} for Ward's family $\FF(A,\bar A)$ if
\begin{itemize}
\item $T\in\FF(A,\bar A),$ and
\smallskip\item $T-A$ is a non-random holomorphic Schwarzian form.
\end{itemize}
We define Virasoro primary fields and current primary fields in terms of Virasoro generators $L_n$ \eqref{eq: Ln} and current generator $J_n$ \eqref{eq: Jn}.

\begin{prop}[Proposition~7.5 in \cite{KM11}] \label{Virasoro primary}
Let $X$ be a Fock space field. Any two of the following assertions imply the third one (but neither one implies the other two):
\begin{itemize}
\smallskip \item \label{item: primary1} $X\in\FF(A,\bar A)$;
\smallskip \item \label{item: primary2} $X$ is a $[\lambda,\lambda_*]$-differential;
\smallskip \item \label{item: primary3} $L_{\ge1}X=0,\; L_0X=\lambda X, \; L_{-1}X=\partial X,$ and similar equations hold for $\bar X.$
\end{itemize}
\end{prop}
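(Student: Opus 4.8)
The plan is to reduce all three implications to a single computation of the singular part of the operator product $A(\zeta)X(z),$ and to exploit that the Virasoro modes and the differential property are both encoded by the same right-hand side. The starting observation is that, by definition of the Virasoro field, $T-A$ is a \emph{non-random} holomorphic Schwarzian form; in any fixed chart it is an ordinary holomorphic function of $\zeta,$ so its product with $X(z)$ carries no singular part, i.e. $\Sing_{\zeta\to z}[(T-A)(\zeta)X(z)]=0.$ Consequently
$$\Sing_{\zeta\to z}[T(\zeta)X(z)]=\Sing_{\zeta\to z}[A(\zeta)X(z)],$$
and since for $n\ge-1$ the mode $L_nX=T*_{-n-2}X$ is exactly the coefficient of $(\zeta-z)^{-n-2}$ in this singular part, the third assertion is equivalent to the single relation
$$\Sing_{\zeta\to z}[A(\zeta)X(z)]=\frac{\lambda X(z)}{(\zeta-z)^2}+\frac{\pa X(z)}{\zeta-z}$$
together with its conjugate for $\bar X.$ Here $L_{\ge1}X=0$ kills all poles of order $\ge3,$ $L_0X=\lambda X$ fixes the double-pole coefficient, and $L_{-1}X=\pa X$ fixes the simple-pole coefficient.

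On the other side I would record that for a $[\lambda,\lambda_*]$-differential the holomorphic Lie derivative along the vector field $k_\zeta,$ defined by $(k_\zeta\,\|\,\phi)(\eta)=1/(\zeta-\eta),$ is
$$\LL_{k_\zeta}^+X(z)=\frac{\lambda X(z)}{(\zeta-z)^2}+\frac{\pa X(z)}{\zeta-z},$$
obtained by substituting $v(z)=1/(\zeta-z)$ and $v'(z)=1/(\zeta-z)^2$ into $\LL_v^+X=(v\pa+\lambda v')X.$ Thus the differential property (second assertion) and the Virasoro relations (third assertion) are encoded by one and the same right-hand side, while the stress-tensor property (first assertion) is precisely the statement that $\Sing_{\zeta\to z}[A(\zeta)X(z)]$ equals $\LL_{k_\zeta}^+X(z)$ (and the analogous relation for $\bar X$).

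With these two identities in hand the three implications become bookkeeping. If the first and second hold, Ward's OPE turns into the displayed singular part, which by the reduction above is the third. If the second and third hold, the third gives the singular part of $A(\zeta)X(z),$ the differential law gives $\LL_{k_\zeta}^+X(z),$ the two agree, and this is Ward's OPE, i.e. the first. The one implication that needs more than bookkeeping is (first and third) $\Rightarrow$ (second): from Ward's OPE and the third assertion one gets $\LL_{k_\zeta}^+X(z)=\lambda X(z)/(\zeta-z)^2+\pa X(z)/(\zeta-z),$ and then for an \emph{arbitrary} local holomorphic vector field $v$ one integrates, via the residue form of Ward's identity,
$$\LL_v^+X(z)=\frac1{2\pi i}\oint_{(z)}v(\zeta)\,A(\zeta)X(z)\,\dd\zeta=v(z)\pa X(z)+\lambda v'(z)X(z),$$
recovering the Lie-derivative formula of a $[\lambda,\ast]$-differential for every $v;$ the conjugate computation using the $\bar X$ relations supplies the $\lambda_*$ part.

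The step I expect to be the genuine obstacle is this last one: passing from ``$\LL_vX$ has the form of the Lie derivative of a $[\lambda,\lambda_*]$-differential for every vector field $v$'' to ``$X$ \emph{is} a $[\lambda,\lambda_*]$-differential.'' This is the infinitesimal-to-global upgrade, where one must argue that matching the infinitesimal transformation law along all vector-field flows forces the finite transformation law $(X\,\|\,\phi)=(h')^\lambda(\overline{h'})^{\lambda_*}(X\,\|\,\wt\phi)\circ h$ under every transition map $h;$ this uses that the relevant transitions are reached by flows of vector fields together with a connectedness and integration argument. Finally, the parenthetical non-implications are exhibited by examples rather than argument: the Virasoro field $T$ itself lies in $\FF(A,\bar A)$ but is a Schwarzian form, not a differential, and it fails the third assertion because of the central term $L_2T\ne0;$ conversely a non-random $[\lambda,\lambda_*]$-differential with $\lambda\ne0$ satisfies the second assertion while $\Sing_{\zeta\to z}[A(\zeta)X(z)]=0$ violates the first, so no single assertion implies the other two.
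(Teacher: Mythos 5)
First, note that the paper itself offers no proof of this statement: it is quoted as Proposition~7.5 of \cite{KM11}, and the entire appendix subsection is declared to be borrowed from that source, so there is no internal argument to compare yours against. On its own terms your reduction is correct and follows the standard route. The three key identities are exactly as you state them: $\Sing_{\zeta\to z}[T(\zeta)X(z)]=\Sing_{\zeta\to z}[A(\zeta)X(z)]$ because $T-A$ is non-random and holomorphic; the third assertion is precisely the statement that this singular part equals $\lambda X(z)/(\zeta-z)^2+\pa X(z)/(\zeta-z)$ together with the conjugate statement for $\bar X,$ since $L_nX=T*_{-n-2}X$ reads off the coefficient of $(\zeta-z)^{-n-2}$; and $\LL_{k_\zeta}^+X=(k_\zeta\pa+\lambda k_\zeta')X$ turns the second assertion into the same right-hand side, while the first assertion is by definition the equality of the two sides. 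The implications $(1)\wedge(2)\Rightarrow(3)$ and $(2)\wedge(3)\Rightarrow(1)$ are then immediate, and your appeal to the residue form of Ward's identity (whose equivalence with Ward's OPE the paper quotes from \cite{KM11}) correctly yields $\LL_v^+X=v\pa X+\lambda v'X$ and, via the $\bar X$ relations, $\LL_v^-X=\bar v\bp X+\lambda_*\overline{v'}X$ for every local holomorphic vector field $v.$

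The one step that is genuinely incomplete is the one you flag yourself: passing from the infinitesimal law $\LL_vX=(v\pa+\bar v\bp+\lambda v'+\lambda_*\overline{v'})X$ for all $v$ to the finite transformation rule $(X\,\|\,\phi)=(h')^\lambda(\overline{h'})^{\lambda_*}(X\,\|\,\wt\phi)\circ h.$ Naming it ``a connectedness and integration argument'' is a plan, not a proof; to close it you should join the transition map $h$ to the identity by a smooth path $h_t$ of locally univalent maps, set $v_t=\dot h_t\circ h_t^{-1},$ and check that the derivative of $t\mapsto (h_t')^{-\lambda}(\overline{h_t'})^{-\lambda_*}\,(X\,\|\,\phi\circ h_t^{-1})\circ h_t$ vanishes once the Lie-derivative formula for $v_t$ is inserted. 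A smaller omission: the parenthetical claim requires one counterexample for each single assertion, and you only supply them for the first two (the Virasoro field, which works only when $c\ne0,$ and a non-random differential); a field satisfying the third assertion but neither of the first two is still missing. Neither point invalidates the architecture of your argument.
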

Here, $L_{\ge k}X = 0$ means that $L_nX = 0$ for all $n\ge k.$
We call fields satisfying all three conditions \emph{(Virasoro) primary} fields in $\FF(A,\bar A).$

A (Virasoro) primary field $X$ is called \emph{current primary} if
$$J_{\ge1}X = J_{\ge1}\bar X = 0,$$
and
$$J_0X = -iqX, \quad J_0\bar X = i\bar q_*\bar X$$
for some numbers $q$ and $q_*.$
They are called ``charges" of $X$.
Here, $J_{\ge k}X = 0$ means that $J_nX = 0$ for all $n\ge k.$
We use the following proposition to prove Proposition~\ref{level2} (level two degeneracy equations for $\Psi$).

\begin{prop}[Proposition~11.2 in \cite{KM11}] \label{level2current}
For a current primary field $V$ with charges $q,q_*$ in $\FF_{(b)},$
$$\Big( L_{-2}-\frac1{2q^2} L_{-1}^2\Big )V =0$$
provided $2q(b+q) = 1.$
\end{prop}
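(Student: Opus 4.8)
The plan is to realize the Virasoro modes $L_n$ in terms of the current modes $J_n$ and to reduce the assertion to a short computation in the Heisenberg algebra generated by the $J_n$. Since $T_{(b)}=-\frac12 J*J+ib\,\pa J$ and $L_n=T*_{-n-2}$, one expects the Sugawara-type representation
$$L_n=-\frac12\sum_{k}{:}J_kJ_{n-k}{:}-ib(n+1)J_n,$$
where ${:}\,\cdot\,{:}$ denotes the usual normal ordering (modes $J_{\ge1}$ placed to the right). First I would record the two structural facts needed below: the current commutator $[J_m,J_n]=-m\,\delta_{m+n,0}$, which comes from the OPE $J(\zeta)J(z)\sim-(\zeta-z)^{-2}$ and underlies the normalization of the Sugawara formula, and the translation rule $\pa J_n=-nJ_{n-1}$, obtained by differentiating the contour definition \eqref{eq: Jn} in $z$. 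As a consistency check, this representation reproduces $L_0V=(\tfrac12q^2-qb)V$, matching the conformal dimension of a current primary field with charge $q$.

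Next I would use the current primary conditions $J_{\ge1}V=0$ and $J_0V=-iqV$ to evaluate $L_{-1}$ and $L_{-2}$ on $V$. In $L_{-1}V$ the background term drops out (its coefficient $n+1$ vanishes at $n=-1$) and, after normal ordering, only the mode pairs $(0,-1)$ and $(-1,0)$ survive, giving $L_{-1}V=iq\,J_{-1}V$. Comparing this with the Virasoro primary relation $L_{-1}V=\pa V$ from Proposition~\ref{Virasoro primary} yields the crucial identification $J_{-1}V=-\tfrac{i}{q}\pa V$. The same bookkeeping applied to $L_{-2}$, where the surviving pairs are $(0,-2)$, $(-2,0)$, $(-1,-1)$ together with the background contribution $ib\,J_{-2}$, produces
$$L_{-2}V=i(q+b)\,J_{-2}V-\tfrac12\,J_{-1}^2V.$$
For the square I would expand $L_{-1}^2V=\pa^2V=\pa(iq\,J_{-1}V)$ and use $\pa J_{-1}=J_{-2}$ to obtain $L_{-1}^2V=iq\,J_{-2}V-q^2\,J_{-1}^2V$. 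Substituting both expressions, the $J_{-1}^2V$ terms cancel identically and
$$\Big(L_{-2}-\frac1{2q^2}L_{-1}^2\Big)V=i\Big(q+b-\frac1{2q}\Big)J_{-2}V,$$
which vanishes precisely when $2q(b+q)=1$, as claimed.

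The main obstacle is not the algebra above but its justification in the present setting. One must verify that the Sugawara-type representation of $L_n$ is legitimate for the Fock-space/OPE definition $L_n=T*_{-n-2}$ (rather than simply imported from radial quantization), that the normal-ordering prescription is compatible with the OPE products, and in particular that the term $ib\,\pa J$ contributes exactly $-ib(n+1)J_n$ with the correct sign, so that the background charge enters only through the single coefficient $q+b$. Once this representation, the commutator $[J_m,J_n]=-m\,\delta_{m+n,0}$, and the translation rule $\pa J_n=-nJ_{n-1}$ are established directly from the defining contour integrals and the OPE $J(\zeta)J(z)\sim-(\zeta-z)^{-2}$, the remaining steps are the routine mode computations indicated above. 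This is the content of \cite[Proposition~11.2]{KM11}, to which one may alternatively appeal.
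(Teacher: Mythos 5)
Your computation is correct and is essentially the argument behind the result as the paper uses it: the paper states this proposition without proof, importing it verbatim as Proposition~11.2 of \cite{KM11}, whose proof is exactly the Sugawara-mode calculation you carry out (the representation $L_n=-\frac12\sum_k{:}J_kJ_{n-k}{:}-ib(n+1)J_n$, the current-primary conditions $J_{\ge1}V=0$, $J_0V=-iqV$, the identification $J_{-1}V=-\tfrac{i}{q}\pa V$ from $L_{-1}V=\pa V$, and the cancellation of the $J_{-1}^2V$ terms leaving the coefficient $i(q+b-\tfrac1{2q})$ of $J_{-2}V$). The one point worth tightening is that your ``translation rule'' $\pa J_n=-nJ_{n-1}$ must be read as the commutator identity $\pa(J_nX)=-nJ_{n-1}X+J_n\pa X$ --- which is how you in fact use it when expanding $L_{-1}^2V=iq\,\pa(J_{-1}V)=iq\,J_{-2}V-q^2J_{-1}^2V$ --- and the justification of the mode representation from the OPE definition $L_nX=T*_{-n-2}X$, which you correctly flag as the only nontrivial input, is supplied in \cite[Lecture~11]{KM11}.
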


%%%%%%%%%%%%%%%%%%%%%%%%% \begin{bibliography}  %%%%%%%%%%%%%%%%%%%%%%%%%%
%\bibliographystyle{plain}
%\bibliography{KT}
%\begin{comment}

%\end{comment}

%%%%%%%%%%%%%%%%%%%%%%%%%% \end{bibliography} %%%%%%%%%%%%%%%%%%%%%%%%%%%

\end{document}